\def\bdeux{2}
\def\bun{1}
\theoremstyle{plain}
\newtheorem{theorem}{Theorem}[section]
\newtheorem{thm}[theorem]{Theorem}
\newtheorem{prop}[theorem]{Proposition}
\newtheorem{lem}[theorem]{Lemma}
\newtheorem{conj}[theorem]{Conjecture}
\newtheorem{defi}[theorem]{Definition}
\newtheorem{example}[theorem]{Example}
\def\tG{\widetilde{\Gamma}}
\newcommand{\Glie}{\mathfrak{g}}
\newcommand{\Yim}{\mathcal{Y}}
\newcommand{\A}{\mathcal{A}}
\newcommand{\ZZ}{\mathbb{Z}}
\newcommand{\CC}{\mathbb{C}}
\newcommand{\NN}{\mathbb{N}}
\newcommand{\QQ}{\mathbb{Q}}
\newcommand{\G}{\Gamma}
\newcommand{\C}{\mathbb{C}}
\newcommand{\Q}{\mathbb{Q}}
\newcommand{\Z}{\mathbb{Z}}
\newcommand{\g}{\mathfrak{g}}
\newcommand{\bo}{\mathfrak{b}}
\newcommand{\gb}{\dot{\mathfrak{g}}}
\newcommand{\Psib}{\mbox{\boldmath$\Psi$}}
\newcommand{\Psibs}{\scalebox{.7}{\boldmath$\Psi$}}
\newcommand{\qbin}[2]{{\left[
\begin{matrix}{\,\displaystyle #1\,}\\
{\,\displaystyle #2\,}\end{matrix}
\right]
}}
\newcommand{\nc}{\newcommand}
\nc{\on}{\operatorname}
\nc{\la}{\lambda}
\nc{\wh}{\widehat}
\nc{\supp}{\mathrm{supp}}
\def\sl{\mathfrak{sl}}
\newtheorem{rem}[theorem]{Remark}
\begin{document}

\title
[Cluster algebras and 
Borel subalgebras of quantum affine 
algebras]
{Cluster algebras and 
category $\mathcal{O}$ for 
representations of
Borel subalgebras of quantum affine 
algebras}

\author{David Hernandez and Bernard Leclerc}

\begin{abstract} 
Let $\mathcal{O}$ be the category of representations of the Borel subalgebra of 
a quantum affine algebra introduced by Jimbo and the first author.
We show that the Grothendieck ring of a certain monoidal subcategory 
of $\mathcal{O}$ has the structure of a cluster algebra of infinite rank,
with an initial seed consisting of prefundamental representations.
In particular, the celebrated Baxter relations for the 
6-vertex model get interpreted as Fomin-Zelevinsky
mutation relations.

\end{abstract}

\maketitle

\setcounter{tocdepth}{1}
\tableofcontents

\section{Introduction}

Let $U_q(\Glie)$ be an untwisted quantum affine algebra (we assume throughout this paper that $q\in\C^*$ is not a root of unity).
M. Jimbo and the first author introduced \cite{HJ} a category $\mathcal{O}$ of representations of a Borel
subalgebra $U_q(\bo)$ of $U_q(\Glie)$. Finite-dimensional
representations of $U_q(\Glie)$ are objects in this category as well as the infinite-dimensional 
prefundamental representations of $U_q(\bo)$ constructed in \cite{HJ}. 
They are obtained as asymptotic limits of Kirillov-Reshetikhin
 modules, which form a family of simple finite-dimensional representations of $U_q(\mathfrak{g})$. 
These prefundamental representations, denoted by $L^+_{i,a}$ and $L^-_{i,a}$, are simple $U_q(\bo)$-modules parametrized by a complex number $a\in\CC^*$ 
and $1\leq i\leq n$, where 
$n$ is the rank of the underlying finite-dimensional simple Lie algebra.
Such prefundamental representations were first constructed explicitly for 
${\mathfrak g} = \widehat{\sl}_2$ by Bazhanov-Lukyanov-Zamolodchikov \cite{BLZ}, for $\widehat{\sl}_3$ by Bazhanov-Hibberd-Khoroshkin \cite{BHK} and 
for $\widehat{\sl}_n$ with $i = 1$ by Kojima \cite{Ko}. 

\medskip

The category $\mathcal{O}$ and the prefundamental representations were used by E. Frenkel and the first author \cite{FH} to prove
a conjecture of Frenkel-Reshetikhin on the spectra of quantum integrable systems. 
Let us recall that the partition function $Z$ of a quantum 
integrable system is crucial to understand its physical properties. 
It may be written in terms of the eigenvalues $\lambda_j$ of the transfer matrix $T$.
Therefore, to compute $Z$ one needs to find the spectrum of $T$.
 Baxter tackled this question in his seminal 1971 paper
\cite{Baxter} for the 
6-vertex and 8-vertex models. He observed moreover
that the eigenvalues $\lambda_j$ of $T$ have a very remarkable form
\begin{equation}    \label{relB}
\lambda_j = A(z) \frac{Q_j(zq^2)}{Q_j(z)} + D(z) \frac{Q_j(zq^{-2})}{Q_j(z)},
\end{equation}
where $q,z$ are parameters of the model (quantum and spectral),
 the functions $A(z)$, $D(z)$ are universal (in the sense that they are the same for all eigenvalues),
and $Q_j$ is a polynomial.
The above relation is now called \emph{Baxter's relation} (or Baxter's $TQ$
relation).
In 1998, Frenkel-Reshetikhin conjectured \cite{Fre} that the spectra of more general quantum integrable systems 
constructed from a representation $V$ of a quantum affine algebra $U_q(\Glie)$
have a similar form. (In this framework, the 6-vertex model is the particular case when
${\mathfrak g} = \widehat{\sl}_2$ and $V$ is irreducible of dimension 2.) 
One of the main steps in the proof of this conjecture \cite{FH} is to interpret
the expected generalized Baxter relations as algebraic identities in the Grothendieck ring
of the category ${\mathcal O}$ for $U_q(\bo)$
(see \cite{h4} for a short overview). 
For example, if ${\mathfrak g} = \widehat{\sl}_2$ and $V$ is the $2$-dimensional simple representation of $U_q(\Glie)$
with $q$-character $\chi_q(V) = Y_{1,a}+ Y_{1,aq^2}^{-1}$, one gets
the following categorical version of Baxter's relation (\ref{relB}):
\begin{equation} \label{relB2}
[V\otimes L^+_{1,aq}] = [\omega_1][L^+_{1,aq^{-1}}] + [-\omega_1][L^+_{1,aq^{3}}].
\end{equation}
(Here, $[\omega_1]$ and $[-\omega_1]$ denote the classes of certain one-dimensional representations of $U_q(\bo)$, see
Definition~\ref{oned} below.) 

\medskip

In another direction, the notion of monoidal categorification
of cluster algebras was introduced by the authors in \cite{HL0}. The cluster
algebra $\mathcal{A}(Q)$ attached to a quiver $Q$ is a commutative $\Z$-algebra
with a distinguished set of generators called cluster variables and obtained
inductively via the Fomin-Zelevinsky procedure of mutation \cite{FZ1}. 
By definition, the rank of $\mathcal{A}(Q)$ is the number of vertices of $Q$ (finite or infinite).
A monoidal category $\mathcal{C}$ is said to be a \emph{monoidal categorification} of $\mathcal{A}(Q)$ if
there exists a ring isomorphism 
$\mathcal{A}(Q)\stackrel{\sim}{\rightarrow} K_0(\mathcal{C})$
which induces a bijection between cluster variables and classes
of simple modules which are \emph{prime} (without non trivial tensor factorization) and \emph{real}
(whose tensor square is simple). Various examples of monoidal categorifications
have been established in terms of quantum affine algebras \cite{HL0, HL},
perverse sheaves on quiver varieties \cite{N, KQ, Qin}, and Khovanov-Lauda-Rouquier algebras \cite{kkko1, kkko2}.

\medskip
In this paper, we propose new monoidal categorifications of cluster algebras in terms of the category $\mathcal{O}$
of a Borel subalgebra $U_q(\bo)$ of an untwisted quantum affine algebra $U_q(\Glie)$. 
More precisely, in \cite{HL} we have attached to $\Glie$ a semi-infinite quiver $G^-$ and 
we proved that the cluster algebra $\mathcal{A}(G^-)$ is isomorphic to the Grothendieck ring
of a monoidal category $\mathcal{C}_\ZZ^-$ of finite-dimensional representations of $U_q(\Glie)$. 
Moreover, the classes of the Kirillov-Reshetikhin
modules in $\mathcal{C}_\ZZ^-$ are the images under this isomorphism of a subset of the cluster variables.
Let $\Gamma$ be the doubly-infinite quiver corresponding to $G^-$, as defined in
\cite[\S2.1.2]{HL}). 
The main result of this paper (Theorem \ref{isom}) is that the completed cluster algebra $\mathcal{A}(\Gamma)$ attached 
to this doubly-infinite quiver is isomorphic to the Grothendieck ring of a certain monoidal subcategory $\mathcal{O}^+_{2\ZZ}$  
of $\mathcal{O}$. This subcategory $\mathcal{O}^+_{2\ZZ}$ is generated by finite-dimensional representations
and positive prefundamental representations whose spectral parameters satisfy an integrality condition (see below
Definitions \ref{defineg}, \ref{defO+2Z} and Proposition \ref{inf}).
Moreover, the classes of the positive prefundamental representations form the cluster variables of an initial seed
of $\mathcal{A}(\Gamma)$.
In particular, when ${\mathfrak g} = \widehat{\sl}_2$ the counterparts (\ref{relB2}) of 
Baxter's relations (\ref{relB})
get interpreted as instances of Fomin-Zelevinsky mutation 
relations\footnote{The generalized Baxter relations of \cite{FH} can also be regarded as relations 
in the cluster algebra, but not as mutation relations since they involve more than $2$ terms in general.}. 
For general types, the first step mutation relations are interpreted as other remarkable relations 
in the Grothendieck ring $K_0(\mathcal{O}^+)$ (Formula (\ref{firstmutation})).

\medskip
Along the way we get interesting additional results, for instance (i) the construction of new asymptotic representations
beyond the case of prefundamental representations (Theorem~\ref{normlimit}), 
and (ii) the tensor factorization of arbitrary simple modules of 
$\mathcal{O}^+$ into prime modules when $\Glie = \widehat{\sl}_2$ (Theorem~\ref{fact}). 
%
We conjecture that the category $\mathcal{O}^+_{2\ZZ}$ is a monoidal 
categorification of the cluster algebra $\mathcal{A}(\G)$ (Conjecture \ref{conjgene}).
We prove this conjecture for $\Glie = \widehat{\sl}_2$ (Theorem \ref{psl2}).
An essential tool in several of our proofs is a duality $D$ between the Grothendieck rings of 
certain subcategories $\mathcal{O}^+$ and $\mathcal{O}^-$ of $\mathcal{O}$ (Theorem~\ref{stable}),
which maps classes of simple objects to classes of simple objects (Theorem~\ref{simpledual}).

\medskip
The paper is organized as follows. In Section \ref{bck} we give some background on quantum affine (or loop) algebras and their Borel subalgebras.
In Section \ref{catO} we review the main properties of the category $\mathcal{O}$ introduced in \cite{HJ} and we introduce the subcategories
$\mathcal{O}^+$ and $\mathcal{O}^-$ of interest for this paper (Definition \ref{newcato}). 
In Section \ref{clustsection} we state the main result on the isomorphism between $\mathcal{A}(\G)$ and 
the Grothendieck ring of $\mathcal{O}^+_{2\Z}$.  
In section \ref{catoprop} we establish relevant properties of $\mathcal{O}^+$, 
in particular we introduce and study the duality $D$ between $\mathcal{O}^+$ and $\mathcal{O}^-$. 
The proof of Theorem \ref{isom}
is given in Section \ref{mainproof}. In the concluding Section~\ref{secconj} we present the conjecture on monoidal categorifications
and we give various evidences supporting it, in particular the existence of asymptotic representations (Section \ref{limfin}). 
To conclude we present additional conjectural 
relations in $K_0(\mathcal{O}^+)$ extending the generalized Baxter relations of \cite{FH} (Conjecture \ref{genconj}).
\medskip
\medskip

The main results of this paper were presented in several conferences 
(Oberwolfach Workshop ``Enveloping algebras and geometric representation theory'' in May 2015, 
Conference ``Categorical Representation Theory and Combinatorics'' in Seoul in December 2015, 
Conference ``A bridge between representation theory and Physics'' in Canterbury in January 2016). 
An announcement was also published in the Oberwolfach Report \cite{h4}.

\medskip

{\bf Acknowledgement : } D. Hernandez is supported in part by the European Research Council under the European Union's 
Framework Programme H2020 with ERC Grant Agreement number 647353 QAffine.

\section{Quantum loop algebra and Borel algebras}\label{bck}

\subsection{Quantum loop algebra}\label{debut}

Let $C=(C_{i,j})_{0\leq i,j\leq n}$ be an indecomposable Cartan matrix
of non-twisted affine type.  We denote by $\g$ the Kac-Moody Lie
algebra associated with $C$.  Set $I=\{1,\ldots, n\}$, and denote by
$\gb$ the finite-dimensional simple Lie algebra associated with the
Cartan matrix $(C_{i,j})_{i,j\in I}$.  Let $\{\alpha_i\}_{i\in I}$,
$\{\alpha_i^\vee\}_{i\in I}$, $\{\omega_i\}_{i\in I}$,
$\{\omega_i^\vee\}_{i\in I}$, $\dot{\mathfrak{h}}$ be the simple
roots, the simple coroots, the fundamental weights, the fundamental
coweights and the Cartan subalgebra of $\gb$, respectively.  We set
$Q=\oplus_{i\in I}\Z\alpha_i$, $Q^+=\oplus_{i\in I}\Z_{\ge0}\alpha_i$,
$P=\oplus_{i\in I}\Z\omega_i$. We will also use $P_\Q = P\otimes \Q$ with
its partial ordering defined by $\omega\leq \omega'$ if and only if $\omega'-\omega\in Q^+$.
Let $D=\mathrm{diag}(d_0,\ldots, d_n)$
be the unique diagonal matrix such that $B=DC$ is symmetric and the
$d_i$'s are relatively prime positive integers.  We denote by
$(~,~):Q\times Q\to\Z$ the invariant symmetric bilinear form such that
$(\alpha_i,\alpha_i)=2d_i$.  We use the numbering of the Dynkin
diagram as in \cite{ka}.  Let $a_0,\ldots,a_n$ be the Kac labels
(\cite{ka}, pp.55-56). We have $a_0 = 1$ and we set $\alpha_0 =
-(a_1\alpha_1 + a_2\alpha_2 + \cdots + a_n\alpha_n)$.

Throughout this paper, we fix a non-zero complex number $q$ which is
not a root of unity.  We set $q_i=q^{d_i}$. We fix once and for all
$h\in\CC$ such that $q=e^h$, and we define $q^r=e^{rh}$ for any
$r\in\QQ$. Since $q$ is not a root of unity, for $r,s\in \QQ$ we have that
$q^r = q^s$ if and only if $r=s$.

We
will use the standard symbols for $q$-integers
\begin{align*}
[m]_z=\frac{z^m-z^{-m}}{z-z^{-1}}, \quad
[m]_z!=\prod_{j=1}^m[j]_z,
 \quad 
\qbin{s}{r}_z
=\frac{[s]_z!}{[r]_z![s-r]_z!}. 
\end{align*}

The \emph{quantum loop algebra} $U_q(\g)$ is the $\C$-algebra defined by generators 
$e_i,\ f_i,\ k_i^{\pm1}$ ($0\le i\le n$) 
and the following relations for $0\le i,j\le n$.
\begin{align*}
&k_ik_j=k_jk_i,\quad k_0^{a_0}k_1^{a_1}\cdots k_n^{a_n}=1,\quad
&k_ie_jk_i^{-1}=q_i^{C_{i,j}}e_j,\quad k_if_jk_i^{-1}=q_i^{-C_{i,j}}f_j,
\\
&[e_i,f_j]
=\delta_{i,j}\frac{k_i-k_i^{-1}}{q_i-q_i^{-1}},
\\
&\sum_{r=0}^{1-C_{i.j}}(-1)^re_i^{(1-C_{i,j}-r)}e_j e_i^{(r)}=0\quad (i\neq j),
&\sum_{r=0}^{1-C_{i.j}}(-1)^rf_i^{(1-C_{i,j}-r)}f_j f_i^{(r)}=0\quad (i\neq j)\,.
\end{align*}
Here we have set $x_i^{(r)}=x_i^r/[r]_{q_i}!$ ($x_i=e_i,f_i$). 
The algebra $U_q(\g)$ has a Hopf algebra structure given by
\begin{align*}
&\Delta(e_i)=e_i\otimes 1+k_i\otimes e_i,\quad
\Delta(f_i)=f_i\otimes k_i^{-1}+1\otimes f_i,
\quad
\Delta(k_i)=k_i\otimes k_i\,,
\\
&
S(e_i) = -k_i^{-1} e_i ,\quad S(f_i) = -f_i k_i,
\quad
S(k_i)=k_i^{-1}\,,
\end{align*}
where $i=0,\cdots,n$. 

The algebra $U_q(\g)$ can also be presented in terms of the Drinfeld
generators \cite{Dri2,bec}
\begin{align*}
  x_{i,r}^{\pm}\ (i\in I, r\in\Z), \quad \phi_{i,\pm m}^\pm\ (i\in I,
  m\geq 0), \quad k_i^{\pm 1}\ (i\in I).
\end{align*}
\begin{example} 
{\rm
For $\gb = \sl_2$, $e_1 = x_{1,0}^+$, $e_0 =
  k_1^{-1} x_{1,1}^-$, $f_1 = x_{1,0}^-$, $f_0 = x_{1,-1}^+ k_1$.
}
\end{example}
We shall use the generating series $(i\in I)$: 
$$\phi_i^\pm(z) = \sum_{m\geq 0}\phi_{i,\pm m}^\pm z^{\pm m} =
k_i^{\pm 1}\text{exp}\left(\pm (q_i - q_i^{-1})\sum_{m > 0} h_{i,\pm
    m} z^{\pm m} \right).$$
We also set $\phi_{i,\pm m}^\pm = 0$ for $m < 0$, $i\in I$.

The algebra $U_q(\g)$ has a $\ZZ$-grading defined by $\text{deg}(e_i)
= \text{deg}(f_i) = \text{deg}(k_i^{\pm 1}) = 0$ for $i\in I$ and
$\text{deg}(e_0) = - \text{deg}(f_0) = 1$.  It satisfies
$\text{deg}(x_{i,m}^\pm) = \text{deg}(\phi_{i,m}^\pm) = m$ for $i\in
I$, $m\in\ZZ$. For $a\in\CC^*$, there is a corresponding automorphism
$\tau_a : U_q(\g)\rightarrow U_q(\g)$ such that $\tau_a(x) = a^m x$
for every $x\in U_q(\g)$ of degree $m\in\ZZ$.

By \cite[Proposition 1.6]{Cha2}, there is an involutive automorphism
$\hat{\omega} : U_q(\g)\rightarrow U_q(\g)$ defined by ($i\in I$,
$m,r\in\Z$, $r\neq 0$)
$$\hat{\omega}(x_{i,m}^\pm) = - x_{i,-m}^\mp,\quad
\hat{\omega}(\phi_{i,\pm m}^\pm) = \phi_{i,\mp m}^{\mp},\quad
\hat{\omega}(h_{i,r}) = - h_{i,-r},\quad (i\in I,\ m,r\in\Z,\ r\neq 0).$$

Let $U_q(\g)^\pm$ (resp. $U_q(\g)^0$) be the subalgebra of $U_q(\g)$
generated by the $x_{i,r}^\pm$ where $i\in I, r\in\ZZ$ (resp. by the
$\phi_{i,\pm r}^\pm$ where $i\in I$, $r\geq 0$). We have a triangular
decomposition \cite{bec}
\begin{equation}\label{fdecomp}U_q(\g)\simeq U_q(\g)^-\otimes
  U_q(\g)^0 \otimes U_q(\g)^+.\end{equation}

\subsection{Borel algebra}\label{boralg}\label{subsec:Utg}

\begin{defi} The Borel algebra $U_q(\bo)$ is 
the subalgebra of $U_q(\g)$ generated by $e_i$ 
and $k_i^{\pm1}$ with $0\le i\le n$. 
\end{defi}
This is a Hopf subalgebra of $U_q(\g)$. 
The algebra $U_q(\mathfrak{b})$ 
contains 
the Drinfeld generators
$x_{i,m}^+$, $x_{i,r}^-$, $k_i^{\pm 1}$, $\phi_{i,r}^+$ 
where $i\in I$, $m \geq 0$ and $r > 0$. 

Let $U_q(\bo)^\pm = U_q(\g)^\pm\cap U_q(\bo)$ and
$U_q(\bo)^0 = U_q(\g)^0\cap U_q(\bo)$. 
Then we have 
\begin{align*}
U_q(\bo)^+ = \langle x_{i,m}^+\rangle_{i\in I, m\geq 0},\quad
U_q(\bo)^0 = 
\langle\phi_{i,r}^+,k_i^{\pm1}\rangle_{i\in I, r>0}.
\end{align*} 
It follows from \cite{bec, da} that we have a triangular decomposition
\begin{equation}\label{triandecomp}
U_q(\bo)\simeq U_q(\bo)^-\otimes 
U_q(\bo)^0 \otimes U_q(\bo)^+.
\end{equation}

\section{Representations of Borel algebras}\label{catO}

In this section we review results on representations of the Borel
algebra $U_q(\bo)$, in particular on the category $\mathcal{O}$
defined in \cite{HJ} and on finite-dimensional representations of
$U_q(\g)$. We also introduce the subcategories $\mathcal{O}^+$ and 
$\mathcal{O}^-$ of particular interest for this paper.

\subsection{Highest $\ell$-weight modules}\label{lhwm}
For a $U_q(\mathfrak{b})$-module $V$ and $\omega\in P_\Q$, we have the weight space
\begin{align}
V_{\omega}=\{v\in V \mid  k_i\, v = q_i^{\omega(\alpha_i^\vee)} v\ (\forall i\in I)\}\,.
\label{wtsp}
\end{align}
 We say that $V$ is 
\emph{Cartan-diagonalizable} 
if $V=\underset{\omega\in P_\Q}{\bigoplus}V_{\omega}$.

For $V$ a Cartan-diagonalizable $U_q(\bo)$-module, 
we define a structure of $U_q(\bo)$-module on its
graded dual $V^* = \bigoplus_{\beta\in P_\Q} V_\beta^*$ by
\begin{align*}
(x\,u)(v)=u\bigl(S^{-1}(x)v\bigr),\qquad
(u\in  V^*, \ v\in V,\ x \in U_q(\mathfrak{b})).
\end{align*}

\begin{defi} A series $\Psib=(\Psi_{i, m})_{i\in I,\, m\geq 0}$ 
of complex numbers such that 
$\Psi_{i,0}\in q_i^\Q$ for all $i\in I$ 
is called an $\ell$-weight. 
\end{defi}

We denote by $P_\ell$ the set of $\ell$-weights. 
Identifying $(\Psi_{i, m})_{m\geq 0}$ with its generating series we shall
write
\begin{align*}
\Psib = (\Psi_i(z))_{i\in I},
\quad
\Psi_i(z) = \underset{m\geq 0}{\sum} \Psi_{i,m} z^m.
\end{align*}
We will often use the involution $\Psib \mapsto \overline{\Psib}$ on $P_\ell$, where
$\overline{\Psib}$ is obtained from $\Psib$ by replacing every pole and zero of $\Psib$ by its inverse.


Since each $\Psi_i(z)$ is an invertible formal power series,
$P_\ell$ has a natural group structure
by component-wise multiplication. 
We have a surjective morphism of groups
$\varpi : P_\ell\rightarrow P_\Q$ given by 
$\Psi_i(0) = q_i^{\varpi(\Psib)(\alpha_i^\vee)}$.
For a $U_q(\mathfrak{b})$-module $V$ and $\Psib\in P_\ell$, 
the linear subspace
\begin{align}
V_{\Psibs} =
\{v\in V\mid
\exists p\geq 0, \forall i\in I, 
\forall m\geq 0,  
(\phi_{i,m}^+ - \Psi_{i,m})^pv = 0\}
\label{l-wtsp} 
\end{align}
is called the $\ell$-weight space of $V$ of $\ell$-weight $\Psib$. 
Note that since $\phi_{i,0}^+ = k_i$, we have $V_{\Psib} \subset V_\omega$, where
$\omega = \varpi(\Psib)$.

\begin{defi} A $U_q(\mathfrak{b})$-module $V$ is said to be 
of highest $\ell$-weight 
$\Psib\in P_\ell$ if there is $v\in V$ such that 
$V =U_q(\mathfrak{b})v$ 
and the following hold:
\begin{align*}
e_i\, v=0\quad (i\in I)\,,
\qquad 
\phi_{i,m}^+v=\Psi_{i, m}v\quad (i\in I,\ m\ge 0)\,.
\end{align*}
\end{defi}

The $\ell$-weight $\Psib\in P_\ell$ is uniquely determined by $V$. 
It is called the highest $\ell$-weight of $V$. 
The vector $v$ is said to be a highest $\ell$-weight vector of $V$.
For any $\Psib\in P_\ell$, there exists a simple 
highest $\ell$-weight module $L(\Psib)$ of highest $\ell$-weight
$\Psib$. This module is unique up to isomorphism.

For $\Psib$ an $\ell$-weight, we set $\widetilde{\Psib} = (\varpi(\Psib))^{-1}\Psib$
and we introduce the simple $U_q(\bo)$-module
\[
 \widetilde{L}(\Psib) := L\left(\widetilde{\Psib}\right).
\]
This is the simple $U_q(\bo)$-module obtained from $L(\Psib)$ by shifting all $\ell$-weights by $\varpi(\Psib)^{-1}$
(see \cite[Remark 2.5]{HJ}).

The submodule of $L(\Psib)\otimes L(\Psib')$ generated by the tensor
product of the highest $\ell$-weight vectors is of highest
$\ell$-weight $\Psib\Psib'$. In particular, $L(\Psib\Psib')$ is a
subquotient of $L(\Psib)\otimes L(\Psib')$.

\begin{defi}\cite{HJ}
For $i\in I$ and $a\in\CC^\times$, let 
\begin{align}
L_{i,a}^\pm = L(\Psib_{i,a}^{\pm 1})
\quad \text{where}\quad 
(\Psib_{i,a}^{\pm 1})_j(z) = \begin{cases}
(1 - za)^{\pm 1} & (j=i)\,,\\
1 & (j\neq i)\,.\\
\end{cases} 
\label{fund-rep}
\end{align}
\end{defi}
We call $L_{i,a}^+$ (resp. $L_{i,a}^-$) a positive (resp. negative)
\emph{prefundamental representation}.

\begin{defi}\label{oned}
For $\omega\in P_\Q$, let $[\omega] = L(\Psib_\omega)$ where $(\Psib_\omega)_i(z) = q_i^{\omega(\alpha_i^\vee)}$ $(i\in I)$.
\end{defi}
Note that the representation $[\omega]$ is $1$-dimensional with a
trivial action of $e_0,\cdots, e_n$. 

For $a\in\CC^\times$, the subalgebra $U_q(\mathfrak{b})$ is stable by $\tau_a$.
Denote its restriction to $U_q(\mathfrak{b})$ by the same letter.
Then the pullbacks of the  $U_q(\mathfrak{b})$-modules $L_{i,b}^\pm$ by $\tau_a$ 
is $L_{i,ab}^\pm$. 

\subsection{Category $\mathcal{O}$}\label{precato}

For $\lambda\in P_\Q$, we set $D(\lambda )=
\{\omega\in P_\Q \mid \omega\leq\lambda\}$.

\begin{defi}\cite{HJ} A $U_q(\mathfrak{b})$-module $V$ 
is said to be in category $\mathcal{O}$ if:

i) $V$ is 
Cartan-diagonalizable;

ii) for all $\omega\in P_\Q$ we have 
$\dim (V_{\omega})<\infty$;

iii) there exist a finite number of elements 
$\lambda_1,\ldots,\lambda_s\in P_\Q$ 
such that the weights of $V$ are in 
$\underset{j=1,\ldots, s}{\bigcup}D(\lambda_j)$.
\end{defi}

The category $\mathcal{O}$ is a monoidal category. 

\begin{rem} 
{\rm The definition of $\mathcal{O}$ is slightly different from that in \cite{HJ}
as we allow only rational powers of $q$ for the eigenvalues of $k_i$.
}
\end{rem}

\newcommand{\mfr}{\mathfrak{r}}
Let 
$P_\ell^{\mathfrak{r}}$ be the subgroup of $P_\ell$
consisting of $\Psib$ such that 
$\Psi_i(z)$ is a rational function of $z$ for any $i\in I$.

\begin{thm}\label{class}\cite{HJ} Let $\Psib\in P_\ell$. 
A simple object in the category $\mathcal{O}$ is of highest $\ell$-weight and 
the simple module $L(\Psib)$ is in category 
$\mathcal{O}$ if and only if $\Psib\in P_\ell^{\mathfrak{r}}$. Moreover, 
for $V$ in category $\mathcal{O}$, $V_{\Psib}\neq 0$ implies $\Psib\in P_\ell^{\mathfrak{r}}$.
\end{thm}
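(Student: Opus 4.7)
My plan is to prove the three assertions contained in the statement in sequence: (A) every simple object of $\mathcal{O}$ is a highest $\ell$-weight module; (B) for rational $\Psib\in P_\ell^{\mathfrak{r}}$, the simple module $L(\Psib)$ lies in $\mathcal{O}$; and (C) every $\ell$-weight occurring in any object of $\mathcal{O}$ is rational, which covers both the converse of (B) and the moreover clause.

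For (A), let $V$ be simple in $\mathcal{O}$. Using axiom (iii) and the fact that $Q^+$ is a free commutative monoid on $\{\alpha_i\}_{i\in I}$, the set of weights of $V$ admits a maximal element $\mu$. The finite-dimensional weight space $V_\mu$ is annihilated by every $e_i$ with $i\in I$, since each such $e_i$ raises the $P_\Q$-weight by $\alpha_i\in Q^+$ (note that $e_0$ does not raise the $P_\Q$-weight, as $\alpha_0\notin Q^+$, so there is no constraint imposed by $e_0$). The commuting family $\{\phi^+_{i,m}\}_{i\in I,\,m\ge0}$ preserves $V_\mu$, and by simultaneous generalized-eigenspace decomposition combined with choosing a vector in the socle of a common eigenspace, one obtains a joint eigenvector $v\in V_\mu$ with $\phi^+_{i,m}v=\Psi_{i,m}v$ for some $\Psib\in P_\ell$. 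Then $v$ is a highest $\ell$-weight vector, and simplicity yields $V\cong L(\Psib)$.

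For (B), any $\Psib\in P_\ell^{\mathfrak{r}}$ admits a factorization $\Psib=\Psib_\omega\cdot\prod_{k}\Psib_{i_k,a_k}^{\epsilon_k}$ over a finite multiset of zeros and poles of its component rational functions, with $\omega\in P_\Q$ determined by the constant terms. Then $L(\Psib)$ is the simple quotient of the submodule of $T:=[\omega]\otimes\bigotimes_k L_{i_k,a_k}^{\epsilon_k}$ generated by the tensor product of highest $\ell$-weight vectors. Each prefundamental factor $L_{i,a}^{\pm}$ lies in $\mathcal{O}$ by its construction in \cite{HJ} as an asymptotic limit of Kirillov-Reshetikhin modules; $[\omega]$ is one-dimensional, hence trivially in $\mathcal{O}$; and since $\mathcal{O}$ is a monoidal category closed under subquotients, $L(\Psib)\in\mathcal{O}$.

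For (C), I first handle the highest $\ell$-weight of a simple object. If $L(\Psib')\in\mathcal{O}$ has highest $\ell$-weight vector $v'$ of weight $\omega':=\varpi(\Psib')$, then $e_iv'=0$ and the Drinfeld relation $[e_i,x^-_{i,r}]=\phi^+_{i,r}/(q_i-q_i^{-1})$ for $r\ge1$ gives $\Psi'_{i,r}v'=(q_i-q_i^{-1})\,e_i\,x^-_{i,r}v'$. The vectors $x^-_{i,r}v'$ all lie in the finite-dimensional weight space $L(\Psib')_{\omega'-\alpha_i}$, and I would upgrade this to rationality of $\Psi'_i(z)$ by combining it with the full generating-series Drinfeld relation $(z-q_i^2w)\phi^+_i(z)x^-_i(w)v'=(q_i^2z-w)\Psi'_i(z)x^-_i(w)v'$: clearing the factor $(z-q_i^2w)$ and using that the matrix of $\phi^+_i(z)$ acting on the finite-dimensional span $\mathrm{span}\{x^-_{i,r}v'\}$ has rational entries in $z$ yields a linear recurrence on the $x^-_{i,r}v'$, whence $\Psi'_i(z)\in\mathbb{C}(z)$. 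For an arbitrary $V\in\mathcal{O}$ with $v\in V_{\Psib}$, the submodule $M:=U_q(\bo)v$ is still in $\mathcal{O}$, so each simple subquotient of $M$ has rational highest $\ell$-weight by the previous step; the rationality of every $\ell$-weight of such a simple subquotient (in particular of $\Psib$) then follows by iterated use of Drinfeld commutations $\phi^+_i(z)x^-_j(w)=g_{ij}(z,w)\,x^-_j(w)\phi^+_i(z)$ with $g_{ij}$ rational, since applying these relations to the highest $\ell$-weight vector shows that all $\ell$-weights of the subquotient are obtained from the highest one by multiplication by rational functions. The main technical obstacle is this rationality upgrade from finite-dimensional span to linear recurrence: finite-dimensionality alone only bounds the dimension of the span, but the specific Drinfeld commutation, once one clears the denominator $(z-q_i^2w)$, converts the constraint into a rational functional equation that enforces the recurrence and hence the rationality of the generating series.
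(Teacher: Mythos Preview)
The paper does not give its own proof of this theorem: it is stated with the citation \cite{HJ} and no argument is reproduced. So there is nothing in the present paper to compare your proposal against; your outline is essentially the strategy of \cite{HJ}, and parts (A) and (B) are correct as written.

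The one place where your write-up is genuinely incomplete is the rationality step in (C). You correctly identify that the vectors $x^-_{i,r}v'$ for $r\ge 1$ lie in the finite-dimensional weight space $L(\Psib')_{\omega'-\alpha_i}$, and that $\Psi'_{i,r}v'=(q_i-q_i^{-1})\,e_i\,x^-_{i,r}v'$. But, as you yourself note, finite-dimensionality alone only gives \emph{some} linear dependence among finitely many of the $x^-_{i,r}v'$, not a constant-coefficient recurrence valid for all $r$. Your proposed fix via the generating-series relation $(z-q_i^{\pm2}w)\phi^+_i(z)x^-_i(w)v'=(q_i^{\pm2}z-w)\Psi'_i(z)x^-_i(w)v'$ does not by itself produce such a recurrence: that identity only says that $x^-_i(w)v'$ is a $\phi^+_i(z)$-eigenvector with a prescribed (still unknown) eigenvalue, and ``clearing the denominator'' does not convert the finite-dimensional span condition into a recurrence on the coefficients $\Psi'_{i,r}$. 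The clean way to close the gap is to use the Drinfeld relation $[h_{i,1},x^-_{i,r}]=c_i\,x^-_{i,r+1}$ (with $c_i\neq0$): since $h_{i,1}v'=\lambda v'$ for some scalar $\lambda$, the operator $T:=c_i^{-1}(h_{i,1}-\lambda)$ satisfies $T(x^-_{i,r}v')=x^-_{i,r+1}v'$, so $T$ is a well-defined endomorphism of the finite-dimensional span $U:=\mathrm{span}\{x^-_{i,r}v':r\ge1\}$. Its characteristic polynomial $p$ gives $p(T)=0$ on $U$, hence a fixed linear recurrence $\sum_k a_k\,x^-_{i,r+k}v'=0$ valid for all $r\ge1$; applying $e_i$ then yields the same recurrence on the scalars $\Psi'_{i,r}$, so $\Psi'_i(z)$ is rational. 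With this correction your argument for (C) goes through, and the passage from the highest $\ell$-weight to arbitrary $\ell$-weights via the rational commutation factors $g_{ij}(z,w)$ is fine.
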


Given a map $c : P_\ell^{\mathfrak{r}}\rightarrow \ZZ$, consider its support
\[
 \supp(c) = \{\Psib \in  P_\ell^{\mathfrak{r}} \mid c(\Psib) \not = 0\}.
\]
Let
$\mathcal{E}_\ell$ 
be the additive group of maps
$c : P_\ell^{\mathfrak{r}}\rightarrow \ZZ$  
such that $\varpi(\supp(c))$ is contained in a finite union
of sets of the form $D(\mu)$, and such that 
for every $\omega\in P_\Q$, 
the set $\supp(c) \cap \varpi^{-1}(\{\omega\})$
is finite.
Similarly, let $\mathcal{E}$ 
be the additive group of maps $c : P_\Q \rightarrow \ZZ$ 
whose support is contained in 
a finite union of sets of the form $D(\mu)$. 
The map $\varpi$ is naturally extended to a surjective homomorphism 
$\varpi : \mathcal{E}_\ell\rightarrow \mathcal{E}$.

As for the category $\mathcal{O}$ of a classical Kac-Moody Lie algebra,
the multiplicity of a simple module in a module of our category $\mathcal{O}$
is well-defined (see \cite[Section 9.6]{ka}) and we have its Grothendieck
ring $K_0(\mathcal{O})$. Its elements are the formal sums 
$$\chi = \sum_{\Psib\in P_\ell^{\mathfrak{r}}} \lambda_{\Psib} [L(\Psib)]$$ 
where the $\lambda_{\Psib\in\ZZ}$ are set so that $\sum_{\Psib\in P_\ell^{\mathfrak{r}}, \omega\in P_\Q} |\lambda_{\Psib}| \text{dim}((L(\Psib))_\omega) [\omega]$
is in $\mathcal{E}$.

 We naturally identify $\mathcal{E}$ with 
the Grothendieck ring of the category of representations of $\mathcal{O}$ with constant $\ell$-weights, 
the simple objects of which are the $[\omega]$, $\omega\in P_\Q$. Thus as in \cite[Section 9.7]{ka} we will regard elements of $\mathcal{E}$
as formal sums
\[
 c = \sum_{\omega\in \supp(c)} c(\omega)[\omega].
\]
The multiplication is given by $[\omega][\omega'] = [\omega+\omega']$ and $\mathcal{E}$
is regarded as a subring of $K_0(\mathcal{O})$. 
If $(c_i)_{i\in \NN}$ is a countable family of elements of $\mathcal{E}$ 
such that for any $\omega\in P_\QQ$, $c_i(\omega)\neq 0$ for finitely
many $i\in\NN$, then the sum $\sum_{i\in\NN}c_i$ is well defined as a map $P_\QQ\rightarrow \ZZ$. 
When this map is in $\mathcal{E}$ we say that $\sum_{i\in\NN}c_i$ is a countable sum of elements in $\mathcal{E}$.
Note that we have the analog notion of a countable sum in $K_0(\mathcal{O})$, compatible
with countable sums of characters in $\mathcal{E}$.

\subsection{Finite-dimensional representations}

Let $\mathcal{C}$ be the category of (type $1$) finite-dimen\-sional
representations of $U_q(\Glie)$.

For $i\in I$, let $P_i(z)\in\mathbb{C}[z]$ be a polynomial with constant 
term $1$. Set
\begin{align*}
\Psib = (\Psi_i(z))_{i\in I},
\quad
\Psi_i(z) = 
q_i^{\text{deg}(P_i)}\frac{P_i(zq_i^{-1})}{P_i(zq_i)}.
\end{align*}
Then $L(\Psib)$ is finite-dimensional.  Moreover the action of
$U_q(\mathfrak{b})$ can be uniquely extended to an action of $U_q(\Glie)$, and any simple object in the
category $\mathcal{C}$ is of this form. Hence $\mathcal{C}$ is a subcategory of $\mathcal{O}$ and the inclusion functor preserves simple objects.

For $i\in I$ and $a\in \C^*$, we denote by $V_{i,a}$ the simple finite-dimensional representation associated with the polynomials
\[
 P_i(z) = 1-za, \qquad P_j(z) = 1 \quad (j\not = i).
\]
The modules $V_{i,a}$ are called the \emph{fundamental representations}.

\subsection{The categories $\mathcal{O}^+$ and $\mathcal{O}^-$} 

We introduce two new subcategories $\mathcal{O}^+$ and $\mathcal{O}^-$ of the category $\mathcal{O}$.

\begin{defi}\label{defineg} An $\ell$-weight is said to be positive (resp. negative) if it is a monomial in the following $\ell$-weights :
\begin{itemize}
\item the $Y_{i,a} = q_i \Psib_{i,aq_i}^{-1}\Psib_{i,aq_i^{-1}}$ where $i\in I$, $a\in\CC^*$, 

\item the $\Psib_{i,a}$ (resp. $\Psib_{i,a}^{-1}$) where $i\in I$, $a\in\CC^*$, 

\item the $[\omega]$, where $\omega\in P_\QQ$. 
\end{itemize}
\end{defi}

\begin{defi}\label{newcato} $\mathcal{O}^+$ (resp. $\mathcal{O}^-$) is the category of representations in $\mathcal{O}$ whose simple
constituents have a positive (resp. negative) highest $\ell$-weight.
\end{defi}

\begin{rem}\label{rem3.10} {\rm
(i) By construction, $\mathcal{O}^+$, $\mathcal{O}^-$ are stable by extensions. We will prove they are also stable by tensor products (Theorem \ref{stable}).

(ii) There are other remarkable subcategories of $\mathcal{O}$, for example the category $\widehat{\mathcal{O}}$
of representations of $U_q(\Glie)$ which belong to $\mathcal{O}$ as representations of $U_q(\bo)$. This category
$\widehat{\mathcal{O}}$ was introduced in \cite{Her04} and further studied in \cite{MY}.

(iii) One motivation of Definition~\ref{newcato} is that $\mathcal{O}^\pm$ contains $\mathcal{C}$ as well as the prefundamental
representations $L_{i,a}^\pm$. We have the following inclusion diagram :
$$\begin{array}{ccc}\mathcal{O}&\supset&\mathcal{O}^+,\mathcal{O}^-
\\\cup&&\cup
\\ \widehat{\mathcal{O}}&\supset&\mathcal{C}.
\end{array}
$$
Note that $\widehat{\mathcal{O}}$ is not contained in $\mathcal{O}^+$ or $\mathcal{O}^-$, 
and conversely neither $\mathcal{O}^+$ nor $\mathcal{O}^-$ is contained in $\widehat{\mathcal{O}}$. 
For instance, for $\Glie = \widehat{\sl}_2$, the representation $L\left(\frac{1-zq}{q - z}\right)$ 
is in the category $\widehat{\mathcal{O}}$ by \cite[Theorem 3.6]{MY},
but not in $\mathcal{O}^+$ or $\mathcal{O}^-$ 
because its highest $\ell$-weight has no factorization as in Equation (\ref{facthw}) below. 
On the other hand the prefundamental representations $L_{1,a}^\pm$ are in
the category $\mathcal{O}^\pm$ but not in  $\widehat{\mathcal{O}}$
(see \cite[section 4.1]{HJ} or \cite[Theorem 3.6]{MY}).

(iv) All generalized Baxter's relations established in \cite{FH} hold in the Grothendieck rings $K_0(\mathcal{O}^+)$
or $K_0(\mathcal{O}^-)$ (see Theorem \ref{relations} reminded below).

(v) The factorization of real simple modules in $\mathcal{O}$ into
prime representations is not unique, so the full category $\mathcal{O}$
is not a good candidate for the notion of monoidal categorification 
discussed in the introduction. For example for $\Glie = \widehat{\sl}_2$, 
it follows from \cite[Remark 4.3, Theorem 4.6]{MY} that 
$$L\left(q^{-5}\frac{1-q^4z}{1-q^{-6}z}\right)\otimes L\left(q^{-9}\frac{1-q^8z}{1-q^{-10}z}\right) 
\simeq L\left(q^{-7}\frac{1-q^4z}{1-q^{-10}z}\right) \otimes L\left(q^{-7}\frac{1-q^8z}{1-q^{-6}z}\right).$$
Moreover the tensor product is simple real and each simple factor has the character 
of a Verma module. Consequently each factor is not isomorphic to a tensor product 
of prefundamental representations and so is prime.}
\end{rem}

\section{Cluster algebras}\label{clustsection}

We state the main results of this paper.
We refer the reader to \cite{FZsurvey} and \cite{GSV} for an introduction
to cluster algebras, and for any un\-defined terminology.

\subsection{An infinite rank cluster algebra}\label{defclus}

Let us recall the infinite quiver $G$ introduced in \cite[Section 2.1.3]{HL}. 
Put $\widetilde{V} = I \times \Z$. $\widetilde{\Gamma}$ is the quiver with vertex set $\widetilde{V}$.
The arrows of $\tG$ are given by
\[
((i,r) \to (j,s)) 
\quad \Longleftrightarrow \quad
(C_{i,j}\not = 0 
\quad \mbox{and} \quad
s=r+d_i C_{i,j}).
\]
By \cite{HL}, the quiver $\widetilde{\Gamma}$ has two isomorphic connected components.
We pick one of the two isomorphic connected components of $\widetilde{\Gamma}$ and call it
$\Gamma$. The vertex set of $\Gamma$ is denoted by $V$.
A second labeling of the vertices of $\Gamma$ is deduced from the first one by means 
of the function $\psi$ defined by
\begin{equation}
 \psi(i,r) = (i, r+d_i),\qquad ((i,r)\in V).
\end{equation}
Let $W\subset I \times \Z$ be the image of $V$ under $\psi$.
We shall denote by $G$ the same quiver as $\Gamma$ but with vertices labeled by $W$.

By analogy with \cite[Section 2.2.1]{HL}, consider an infinite set of indeterminates 
$$
\mathbf{z} = \{z_{i,r}\mid (i,r)\in V\}
$$
over $\Q$.
Let $\mathcal{A}(\G)$ be the cluster algebra defined by the initial seed 
$(\mathbf{z}, \G)$.
Thus,  $\mathcal{A}(\G)$ is the subring of the field of rational functions
$\Q(\mathbf{z})$ generated by all the cluster variables, that is the elements obtained from
some element of $\mathbf{z}$ via a finite sequence of seed mutations.
Each element of $\mathcal{A}(\G)$ is a linear combination
of finite monomials in some cluster variables.
By the Laurent phenomenon \cite{FZ1}, $\mathcal{A}(\G)$ is contained in $\Z[z_{i,r}^{\pm 1}]_{(i,r)\in V}$.

For our purposes in this paper, it is always possible to 
work with sufficiently large finite subseeds 
of the seed $(\mathbf{z},\G)$, and replace $\mathcal{A}(\G)$ by the genuine cluster subalgebras
attached to them.
On the other hand, statements become nicer if we allow ourselves to
formulate them in terms of the infinite rank cluster algebra $\mathcal{A}(\G)$.

Define an $\mathcal{E}$-algebra homomorphism $\chi : \Z[z_{i,r}^{\pm 1}]\otimes_\ZZ\mathcal{E} \to \mathcal{E}$ by setting
\[
 \chi(z_{i,r}^{\pm 1}) = \left[\left(\frac{\mp r}{2d_i}\right)\omega_i\right], \qquad ((i,r)\in V).
\]
For $A\in \mathcal{A}(\G)\otimes_\ZZ\mathcal{E}$, we write $\chi(A) = \sum_\omega A_\omega [\omega]$ and 
$|\chi|(A) = \sum_\omega |A_\omega| [\omega]$. 
We will consider the completed tensor product
$$\mathcal{A}(\G)\hat{\otimes}_\ZZ \mathcal{E},$$ 
that is, the algebra of countable sums $\sum_{i\in\NN} A_i$ of elements $A_i \in \mathcal{A}(\G)\otimes_\ZZ \mathcal{E}$ such that
$\sum_{i\in\NN} |\chi|(A_i)$ is in $\mathcal{E}$ as a countable sum (as defined in Section \ref{precato}).
Note that in particular we have the analog notion of a countable sum in $\mathcal{A}(\G)\hat{\otimes}_\ZZ\mathcal{E}$.

\subsection{Main Theorem}

\begin{defi}\label{defO+2Z}
Define the category $\mathcal{O}^+_{2\ZZ}$ as the subcategory of representations in $\mathcal{O}^+$ whose simple
constituents have a highest $\ell$-weight $\Psib$ such that the roots and the poles of $\Psib_i(z)$ 
are of the form $q^r$ with $(i,r)\in V$.
\end{defi}

We will write for short $\ell_{i,a} = [L_{i,a}^+]$.

\begin{thm}\label{isom} The category $\mathcal{O}^+_{2\ZZ}$ is monoidal 
 and the identification 
\begin{equation}
\label{identi} z_{i,r}\otimes \left[\frac{r}{2d_i}\omega_i\right]  \equiv\ell_{i,q^r},\qquad ((i,r)\in V)
\end{equation} 
defines an isomorphism of $\mathcal{E}$-algebras
$$\mathcal{A}(\G)\hat{\otimes}_\ZZ \mathcal{E} \simeq K_0(\mathcal{O}^+_{2\ZZ})$$
compatible with countable sums.
\end{thm}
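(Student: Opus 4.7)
The plan is to establish the isomorphism in four stages: monoidality of $\mathcal{O}^+_{2\ZZ}$, an initial embedding of the Laurent polynomial ring in the initial cluster variables, verification of the first-level Fomin--Zelevinsky exchange relations, and an inductive extension together with a completion argument.

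First, $\mathcal{O}^+_{2\ZZ}$ is monoidal. By Theorem \ref{stable}, $\mathcal{O}^+$ is stable under tensor products; it remains to check that the integrality condition on zeros and poles of $\Psib_i(z)$ is preserved by multiplication. Since the highest $\ell$-weight of a tensor product is the product of the highest $\ell$-weights, and since the $\ZZ$-lattice of allowed exponents indexing $V\subset I\times\ZZ$ is closed under the shifts $r\mapsto r\pm d_i$ that appear in the decomposition $Y_{i,a}=q_i\Psib_{i,aq_i}^{-1}\Psib_{i,aq_i^{-1}}$, the category $\mathcal{O}^+_{2\ZZ}$ is closed under $\otimes$.

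Next, to construct a candidate homomorphism $\phi\colon\mathcal{A}(\G)\otimes_\ZZ\mathcal{E}\to K_0(\mathcal{O}^+_{2\ZZ})$, I would first extend the identification (\ref{identi}) to the subring generated by the initial cluster variables. The classes $\ell_{i,q^r}$ together with the character classes $[\omega]$ are algebraically independent in $K_0(\mathcal{O}^+_{2\ZZ})$, because the $L^+_{i,q^r}$ are pairwise non-isomorphic simple modules and their $\ell$-characters are independent in the $\ell$-weight group. Hence the $\mathcal{E}$-linear assignment (\ref{identi}) extends uniquely to an injective $\mathcal{E}$-algebra homomorphism on $\ZZ[\mathbf{z}^{\pm 1}]\otimes_\ZZ\mathcal{E}$.

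The decisive stage is verifying the exchange relation at each vertex $(i,r)\in V$, namely that there exists a simple object $S_{i,r}\in\mathcal{O}^+_{2\ZZ}$ such that
\begin{equation*}
\ell_{i,q^r}\cdot [S_{i,r}] \;=\; M^+_{i,r}+M^-_{i,r}
\end{equation*}
in $K_0(\mathcal{O}^+_{2\ZZ})$, where the two monomials $M^\pm_{i,r}$ in the prefundamental classes and the $[\omega]$ are dictated by the arrows of $\G$ incident to $(i,r)$. For $\Glie=\widehat{\sl}_2$ this is precisely the categorical Baxter relation (\ref{relB2}), and in general it is the ``first mutation relation'' (\ref{firstmutation}) announced by the paper. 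I would realise $S_{i,r}$ as an asymptotic limit of Kirillov--Reshetikhin-type modules using Theorem \ref{normlimit}, and then compute the Jordan--H\"older constituents of the tensor product $L^+_{i,q^{r-2d_i}}\otimes L^+_{i,q^{r+2d_i}}$ by an $\ell$-character analysis, using the duality $D$ from Theorem \ref{stable} to control the non-highest factors.

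Once the first-level exchanges are established, every further mutation happens inside a finite subquiver of $\G$, so the exchange relations propagate by the usual Fomin--Zelevinsky induction and $\phi$ extends unambiguously to all of $\mathcal{A}(\G)\otimes_\ZZ\mathcal{E}$. Surjectivity follows from Definition \ref{defineg}: every simple of $\mathcal{O}^+_{2\ZZ}$ has a positive highest $\ell$-weight, hence is a subquotient of a product of the generators $L^+_{i,q^r}$, $V_{i,q^s}$ and $[\omega]$; the fundamental classes $[V_{i,q^s}]$ themselves become cluster variables after finitely many mutations, by the analogous result of \cite{HL} for $\mathcal{C}^-_\ZZ$. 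Injectivity is inherited from the linear independence of simple classes. Finally, to upgrade to the completion $\mathcal{A}(\G)\hat{\otimes}_\ZZ\mathcal{E}$, one checks that the $|\chi|$-convergence condition defining the left-hand completion matches, under $\phi$, the condition defining countable sums of simple classes in $K_0(\mathcal{O})$, since both amount to local finiteness of weight multiplicities over bounded-above regions of $P_\QQ$.

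The principal obstacle is the third stage. The formal steps 1, 2 and 4 reduce either to straightforward character bookkeeping or to the Laurent phenomenon on finite subseeds, but the exchange relation demands both the existence of the non-prefundamental simple modules $S_{i,r}$ and a precise match between the combinatorics of $\G$ and the algebra of $\ell$-weight monomials. It is there that the new asymptotic construction (Theorem \ref{normlimit}) and the duality $D$ (Theorems \ref{stable}, \ref{simpledual}) do the essential work.
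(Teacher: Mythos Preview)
Your plan has a genuine gap at stage 4. Verifying the first-level exchange relations (formula (\ref{firstmutation})) only shows that the cluster variables obtained by a \emph{single} mutation from the initial seed lie in $K_0(\mathcal{O}^+_{2\ZZ})$. There is no ``usual Fomin--Zelevinsky induction'' that propagates this: after one mutation the quiver has changed, and the next exchange relation is a \emph{different} relation, involving the new quiver and the new cluster variable you just produced. To continue the induction you would need to know that this new variable is again the class of a simple module with a computable $q$-character, so that the next exchange can again be interpreted as a relation in $K_0(\mathcal{O}^+_{2\ZZ})$. That is essentially Conjecture \ref{conjgene}, not something you have established. Knowing the mutations out of one seed does not hand you the mutations out of any other.

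The paper's proof of the inclusion $\mathcal{A}(\G)\hat\otimes_\ZZ\mathcal{E}\subseteq K_0(\mathcal{O}^+_{2\ZZ})$ (Proposition \ref{reciproque}) avoids this trap entirely and proceeds globally: apply the duality $D$ to pass to $\mathcal{O}^-$; replace each initial variable $z_{i,r}$ by a finite Kirillov--Reshetikhin class $[W_{i,r,N}]$; invoke \cite[Theorem 5.1]{HL} to conclude that the resulting element $\phi_N(\chi)$ lies in $K_0(\mathcal{C})$ for every $N$; then let $N\to\infty$ and show (Lemma \ref{prneg}) that $\chi_q(D(\chi))$ always contains a maximal negative $\ell$-weight, whence a triangularity argument expresses $\chi$ as a countable combination of classes of simples. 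The reverse inclusion (Proposition \ref{unsens}) is likewise not proved by mutating step by step: it is a multi-grading comparison between $\mathcal{A}(G^-)$, $\mathcal{A}(H^-)$ and $\mathcal{A}(\G)$ that transports \cite[Theorem 3.1]{HL} on fundamental modules into the present setting. The relations (\ref{firstmutation}) are computed in the paper only as illustrative examples, not as the engine of the proof.
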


\begin{rem}\label{ident} 
{\rm
(i) The identification (\ref{identi}) gives an isomorphism of $\mathcal{E}$-algebras 
$$\mathcal{E}[z_{i,r}^{\pm 1}]_{(i,r)\in V} \stackrel{\sim}{\longrightarrow}\mathcal{E}[\ell_{i,q^r}^{\pm 1}]_{(i,r)\in V}.$$ 
which can be extended to countable sums as above. So the main point of the proof of Theorem~\ref{isom} will be to show that the subalgebra $\mathcal{A}(G)\hat{\otimes}_\ZZ \mathcal{E}$ is 
mapped to $K_0(\mathcal{O}^+_{2\ZZ})$ by this isomorphism.

(ii) As in the case of finite-dimensional representations, the description 
of the simple objects of $\mathcal{O}^+$ essentially reduces to the description of the simple
objects of $\mathcal{O}_{2\Z}^+$ (the decomposition explained in  \cite[\S3.7]{HL0} can be 
extended to our more general situation by using the asymptotic approach of \S\ref{limfin} below).
Hence the Grothendieck ring $K_0(\mathcal{O}_{2\Z}^+)$ contains all the interesting information
on $K_0(\mathcal{O}^+)$.
}
\end{rem}

The proof of Theorem~\ref{isom} will be given in \S\ref{mainproof}, using material presented in \S\ref{catoprop}. 

\section{Properties of the category $\mathcal{O}^+$}\label{catoprop}

\subsection{$q$-characters}\label{qcarsec} The $q$-character morphism was first considered in \cite{Fre} 
and is a very useful tool for our proofs.

Recall from \S\ref{precato} the notations $\mathcal{E}$ and $\mathcal{E}_\ell$.
Because of the support condition, we can endow $\mathcal{E}$ with
a ring structure defined by  
\[
 (c\cdot d)(\omega) = \sum_{\omega'+\omega''=\omega} c(\omega')d(\omega''),
 \qquad (c,d\in \mathcal{E},\ \omega\in P_\Q).
\]
Similarly, $\mathcal{E}_\ell$ also has a ring structure given by
\[
 (c\cdot d)(\Psib) = \sum_{\Psib'\Psib''=\Psib} c(\Psib')d(\Psib''),
 \qquad (c,d\in \mathcal{E}_\ell,\ \Psib\in  P_\ell^{\mathfrak{r}}),
\]
and such that $\varpi$ becomes a ring homomorphism.

For $\Psib\in P_\ell^{\mathfrak{r}}$ and $\omega\in P_\Q$, we define the delta functions
$[\Psib] = \delta_{\Psibs,.}\in\mathcal{E}_\ell$ and 
$[\omega] = \delta_{\omega,.}\in\mathcal{E}$, where as usual $\delta$ 
denotes the Kr\"onecker symbol. Note that the above multiplications give
\[
 [\Psib']\cdot[\Psib''] = [\Psib'\Psib''],\quad
 [\omega']\cdot[\omega''] = [\omega'+\omega''].
\]

Let $V$ be a $U_q(\mathfrak{b})$-module in category $\mathcal{O}$. 
We define \cite{Fre, HJ} the $q$-character and the character of $V$; 
\[
\chi_q(V) := 
\sum_{\Psibs\in P_\ell^{\mathfrak{r}}}  
\mathrm{dim}(V_{\Psibs}) [\Psib]\in \mathcal{E}_\ell,
\qquad
\chi(V) := \varpi(\chi_q(V)) =  \sum_{\omega\in P_\Q} 
\text{dim}(V_\omega) [\omega]\in\mathcal{E}.
\]
If $V\in \mathcal{O}$ has a unique $\ell$-weight $\Psib$
whose weight $\varpi(\Psib)$ is maximal,
we also consider its normalized $q$-character $\widetilde{\chi}_q(V)$
and normalized character $\widetilde{\chi}(V)$ defined by
\begin{align*}
\widetilde{\chi}_q(V) := [\Psib^{-1}]\cdot\chi_q(V),
\qquad 
\widetilde{\chi}(V) := \varpi(\widetilde{\chi}_q(V)).
\end{align*}
Note that 
\[
\chi_q\left(\widetilde{L}(\Psib)\right) = [\widetilde{\Psib}]\cdot\widetilde{\chi}_q(L(\Psib))
\not = \widetilde{\chi}_q(L(\Psib)).  
\]

\begin{prop}\cite{HJ} The $q$-character morphism 
$$
\chi_q : K_0(\mathcal{O})\rightarrow
\mathcal{E}_\ell,\quad [V]\mapsto \chi_q(V),
$$
is an injective ring morphism.
\end{prop}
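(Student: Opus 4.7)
The plan is to verify three things in sequence: well-definedness of $\chi_q$ as a group homomorphism into $\mathcal{E}_\ell$, multiplicativity with respect to tensor products, and injectivity.

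For well-definedness, I would first check on a single simple module $L(\Psib) \in \mathcal{O}$ that $\chi_q(L(\Psib)) \in \mathcal{E}_\ell$. Each $\ell$-weight space $L(\Psib)_{\Psibs_0}$ lies inside the weight space $L(\Psib)_{\varpi(\Psibs_0)}$ (because $\phi^+_{i,0} = k_i$), which is finite-dimensional by definition of $\mathcal{O}$; this gives the finite-fiber condition. Moreover $\varpi(\supp \chi_q(L(\Psib))) = \supp \chi(L(\Psib))$ sits in a finite union of cones $D(\lambda_j)$, again by definition. Additivity on short exact sequences is automatic since generalized eigenspace dimensions add. This then extends from the generators $[L(\Psib)]$ to the countable sums defining $K_0(\mathcal{O})$ by using the support condition attached to such sums, which guarantees absolute convergence of the coefficient of each $[\Psib_0]$ in the resulting $q$-character.

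For multiplicativity, the identity to establish is
$$\dim (V\otimes W)_{\Psibs} \,=\, \sum_{\Psibs'\Psibs''=\Psibs}\dim V_{\Psibs'}\,\dim W_{\Psibs''},$$
and the key input is the shape of the coproduct on the Drinfeld generators $\phi^+_{i,m}$. Using the triangular decomposition (\ref{triandecomp}) one shows
$$\Delta(\phi^+_{i,m}) \,=\, \phi^+_{i,m}\otimes \phi^+_{i,m} + X,$$
where $X$ is a sum of terms $a\otimes b$ with $a$ of strictly negative weight (involving the negative Drinfeld generators $x^-_{j,\ell}$) and $b$ of strictly positive weight. Filtering $V\otimes W$ by weight of the left tensor factor (after refining $\leq$ to a total order on the finite set of weights appearing), the action of each $\phi^+_{i,m}$ preserves the filtration and acts on the associated graded as $\phi^+_{i,m}\otimes\phi^+_{i,m}$. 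Since generalized eigenspace dimensions are invariant under passing to the associated graded, the multiplicativity formula follows.

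For injectivity, suppose $\chi = \sum_{\Psibs}\lambda_{\Psibs}[L(\Psib)]$ lies in $\ker\chi_q$ and is nonzero, and set $S = \{\Psib\mid \lambda_{\Psibs}\neq 0\}$. The support condition defining $K_0(\mathcal{O})$ forces $\varpi(S)$ into a finite union of cones $D(\mu_j)$ and makes its fibers under $\varpi$ finite, so $\varpi(S)$ admits maximal elements. Pick a maximal $\omega \in \varpi(S)$ and any $\Psib_0 \in S$ with $\varpi(\Psib_0) = \omega$. For any $\Psib'\in S$ with $L(\Psib')_{\Psibs_0}\neq 0$, one has $\varpi(\Psib_0)\leq \varpi(\Psib')$ by Theorem \ref{class} and highest-weight theory; maximality of $\omega$ forces equality, after which $L(\Psib')_{\Psibs_0}\subset L(\Psib')_\omega$ lies in the one-dimensional highest weight space of $L(\Psib')$, whose $\ell$-weight is $\Psib'$. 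Hence $\Psib' = \Psib_0$ and the coefficient of $[\Psib_0]$ in $\chi_q(\chi)$ equals $\lambda_{\Psibs_0}\neq 0$, contradicting $\chi \in \ker\chi_q$. The main obstacle is the multiplicativity step, since establishing the upper-triangular shape of $\Delta(\phi^+_{i,m})$ requires translating between the Chevalley presentation (where the coproduct is explicit) and the Drinfeld presentation (where the $\phi^+_{i,m}$ live); this is typically handled by combining the Damiani--Beck identifications with an induction on $m$. By comparison, steps (i) and (iii) are essentially bookkeeping with the definitions of $\mathcal{O}$, $K_0(\mathcal{O})$ and $\mathcal{E}_\ell$.
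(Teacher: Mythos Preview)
The paper itself does not prove this proposition; it is cited from \cite{HJ}, and the argument there (for well-definedness and injectivity) is essentially the highest-weight bookkeeping you carry out, while the multiplicativity goes back to \cite{Fre} in the finite-dimensional setting. So your overall strategy is the standard one and is correct in outline.

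There is, however, a genuine slip in your multiplicativity step. You write
\[
\Delta(\phi^+_{i,m}) \,=\, \phi^+_{i,m}\otimes \phi^+_{i,m} + X,
\]
but for $m>0$ the elements $\phi^+_{i,m}$ are not grouplike. The correct approximate coproduct (this is what the Damiani--Beck analysis \cite{da, bec} actually gives) is a statement about the generating series
\[
\Delta\bigl(\phi^+_i(z)\bigr) \;\equiv\; \phi^+_i(z)\otimes\phi^+_i(z)
\pmod{\;U_q(\bo)^-_{<0}\,U_q(\bo)^0\;\otimes\;U_q(\bo)^0\,U_q(\bo)^+_{>0}\;},
\]
which coefficient by coefficient reads
\[
\Delta(\phi^+_{i,m}) \,=\, \sum_{j+k=m}\phi^+_{i,j}\otimes\phi^+_{i,k} \;+\; X_m.
\]
With this corrected formula your filtration argument survives unchanged: on the associated graded, the full series $\phi^+_i(z)$ acts as $\phi^+_i(z)\otimes\phi^+_i(z)$, so if $v\in V_{\Psibs'}$ and $w\in W_{\Psibs''}$ then $v\otimes w$ lies in the generalized eigenspace for the series $\Psi'_i(z)\Psi''_i(z)$, i.e.\ in $(V\otimes W)_{\Psibs'\Psibs''}$. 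This is exactly what gives the convolution identity you stated. Your well-definedness and injectivity paragraphs are fine as written.
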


Following \cite{Fre}, consider the ring of Laurent polynomials $\Yim =
\ZZ[Y_{i,a}^{\pm 1}]_{i\in I,a\in\CC^*}$ in the indeterminates
$\{Y_{i,a}\}_{i\in I, a\in \C^*}$.  Let $\mathcal{M}$ be the multiplicative group of
Laurent monomials in~$\Yim$. For example, for $i\in I$ and $a\in\CC^*$ define
$A_{i,a}\in\mathcal{M}$ by
$$
A_{i,a}=
Y_{i,aq_i^{-1}}Y_{i,aq_i}
\left(\prod_{j : C_{j,i} = -1}Y_{j,a}
\prod_{j : C_{j,i} = -2}Y_{j,aq^{-1}}Y_{j,aq}
\prod_{j : C_{j,i} = -3}Y_{j,aq^{-2}}Y_{j,a}Y_{j,aq^2}\right)^{-1}\,.
$$
For a monomial 
$m = \prod_{i\in I, a\in\CC^*}Y_{i,a}^{u_{i,a}} \in \mathcal{M}$, 
we consider its `evaluation at $\phi^+(z)$'. 
By definition it is the element 
$m(\phi(z))\in P_\ell^{\mathfrak{r}}$  
given by
$$m\bigl(\phi(z))=
\prod_{i\in I, a\in\CC^*}
\left(Y_{i,a}(\phi(z))\right)^{u_{i,a}}\text{ where }
\Bigl(Y_{i,a}\bigl(\phi(z)\bigr)\Bigr)_j
=\begin{cases}
\displaystyle{q_i\frac{1-a q_i^{-1}z}{1-aq_iz}}& (j=i),\\
1 & (j\neq i).\\
\end{cases}$$
This defines an injective group morphism 
$\mathcal{M}\rightarrow P_\ell^{\mathfrak{r}}$. 
We identify a monomial $m\in\mathcal{M}$ with its image in 
$P_\ell^{\mathfrak{r}}$. This is compatible with the notation $Y_{i,a}$
used in definition \ref{defineg}. Note that $\varpi(Y_{i,a}) = \omega_i$ and $\varpi(A_{i,a}) = \alpha_i$.

It is proved in \cite{Fre} that a finite-dimensional
$U_q(\Glie)$-module $V$ satisfies 
$V = \bigoplus_{m\in\mathcal{M}} V_{m\left(\phi(z)\right)}$.
In particular, $\chi_q(V)$ can be viewed  
as an element of $\Yim$.

A monomial $M\in\mathcal{M}$ is said to be dominant if
$M\in\ZZ[Y_{i,a}]_{i\in I, a\in\CC^*}$.  
Given a finite-dimensional simple $U_q(\Glie)$-module $L(\Psib)$,
there exists a dominant monomial
$M\in\mathcal{M}$ such that
$\Psib = M\bigl(\phi(z)\bigr)$.
 We will also denote $L(\Psib) = L(M)$.

For example, for $i\in I$, $a\in\CC^*$ and $k\geq 0$, we have the
Kirillov-Reshetikhin module
\begin{align}
W_{k,a}^{(i)} = L\left(Y_{i,a}Y_{i,aq_i^2}\cdots Y_{i,aq_i^{2(k-1)}}\right).
\label{KRmod}
\end{align}

\begin{example}\label{exkr} 
{\rm
In the case $\Glie = \widehat{\sl}_2$, we have ($k\geq
  0$, $a\in\CC^*$) \cite{Fre}:
$$\chi_q(W_{k,aq^{1-2k}}^{(1)}) = Y_{aq^{-1}}Y_{aq^{-3}}\cdots Y_{aq^{-2k+1}}
(1 + A_{1,a}^{-1} + A_{1,a}^{-1}A_{1,aq^{-2}}^{-1} 
+ \cdots + A_{1,a}^{-1} \cdots A_{1,aq^{-2(k - 1)}}^{-1}).
$$
}
\end{example}

\begin{thm}\label{formuachar}
(i) \cite{HJ, FH} For any $a\in\CC^*$, $i\in I$ we have
$$\chi_q(L_{i,a}^+) = \left[\Psib_{i,a}\right] \chi(L_{i,a}^+) = \left[\Psib_{i,a}\right] \chi(L_{i,a}^-),$$
where $\chi(L_{i,a}^+) = \chi(L_{i,a}^-)$ does not depend on $a$.

(ii) \cite{HJ} For any $a\in\CC^*$, $i\in I$ we have
$$\chi_q(L_{i,a}^-)\in \left[\Psib_{i,a}^{-1}\right] (1 + A_{i,a}^{-1}\ZZ[[A_{j,b}^{-1}]]_{j\in I, b\in\CC^*}).$$
\end{thm}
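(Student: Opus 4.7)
The plan is to exploit the fact that both $L_{i,a}^+$ and $L_{i,a}^-$ arise as asymptotic limits of Kirillov--Reshetikhin modules, a construction introduced in \cite{HJ}. The asymptotic method reduces all three claims to combinatorial identities among the $q$-characters of the finite-dimensional modules $W^{(i)}_{k,b}$, for which explicit formulas (such as the one recalled in Example~\ref{exkr}) are available.

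First I would handle the independence of $a$ in the character. The restriction of the automorphism $\tau_b$ to $U_q(\bo)$ fixes every $k_i$ (because $k_i$ has degree $0$), so its pullback preserves the weight decomposition; since $\tau_b^\ast(L_{i,a}^{\pm})\simeq L_{i,ab}^{\pm}$, the ordinary characters $\chi(L_{i,a}^{\pm})$ do not depend on $a$. The coincidence $\chi(L_{i,a}^+)=\chi(L_{i,a}^-)$ is obtained from the fact that both asymptotic limits are taken over the \emph{same} tower of KR modules, read in two opposite directions: fixing the top (resp.\ bottom) $\ell$-weight yields $L_{i,a}^+$ (resp.\ $L_{i,a}^-$), but the underlying weight-multiplicity data is symmetric. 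Alternatively, one invokes the duality $D$ between $\mathcal{O}^+$ and $\mathcal{O}^-$ alluded to in Remark~\ref{rem3.10}(v) and Theorem~\ref{stable} below, which exchanges $L_{i,a}^+$ with $L_{i,a}^-$ while sending characters to characters.

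For the main formula $\chi_q(L_{i,a}^+) = [\Psib_{i,a}]\chi(L_{i,a}^+)$, one constructs $L_{i,a}^+$ as an inverse limit of renormalized KR modules $W^{(i)}_{k,b_k}$ with $b_k = aq_i^{-2(k-1)}$. The key computation is to show that on each fixed weight space, after dividing the $\phi_j^+(z)$-action by the evaluation of the top monomial $M_k = \prod_{s=0}^{k-1}Y_{i,aq_i^{-2s}}$, the operator stabilizes as $k\to\infty$ to the scalar $q_j^{\omega(\alpha_j^\vee)}(\Psib_{i,a})_j(z)$. The stabilization rests on an explicit telescoping: each $\ell$-weight monomial appearing in $\chi_q(W^{(i)}_{k,b_k})$ has the form $M_k\cdot A_{i,aq_i^{-2j_1}}^{-1}\cdots A_{i,aq_i^{-2j_r}}^{-1}$ times products of other $A^{-1}_{j,b}$, and evaluating the resulting rational function at $\phi^+(z)$ produces a fraction whose poles coming from the $M_k$-part cancel against the poles of the trailing $A^{-1}$-factors, leaving only a polynomial of shape $(\Psib_{i,a})_j(z) \cdot q_j^{\omega(\alpha_j^\vee)}$. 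This collapses every $\ell$-weight at weight $\omega$ to the single value $\Psib_{i,a}\Psib_\omega$, proving (i).

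For part (ii), one applies the general Frenkel--Mukhin-type result in category $\mathcal{O}$ (as established in \cite{HJ}), which forces every $\ell$-weight of a simple module to be the highest $\ell$-weight times a monomial in the $A_{j,b}^{-1}$, so $\chi_q(L_{i,a}^-) \in [\Psib_{i,a}^{-1}]\cdot\ZZ[[A_{j,b}^{-1}]]$. The sharper statement that every non-trivial summand lies in $A_{i,a}^{-1}\ZZ[[A_{j,b}^{-1}]]$ comes from analyzing the first descent: the unique (up to scalar) vector $f_i v$, with $v$ the highest $\ell$-weight vector, lives in the $\ell$-weight space $\Psib_{i,a}^{-1}A_{i,a}^{-1}$; since $U_q(\bo)^-$ is generated from this first descent direction, a standard induction on weight (or PBW-filtration) argument shows that every subsequent lowering of the weight must multiply by another $A_{j,b}^{-1}$, so $A_{i,a}^{-1}$ is a common factor of all non-top monomials.

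The main obstacle is Step~(i), specifically the stabilization argument for the normalized $\phi_j^+(z)$-action on weight spaces of $W^{(i)}_{k,b_k}$. Setting up a compatible system of identifications between finite-dimensional weight spaces, and verifying that the telescoping of rational factors actually produces a constant $\ell$-weight independent of $k$ on each weight space, is the delicate technical heart of the argument; the rest is combinatorial bookkeeping and general formalism.
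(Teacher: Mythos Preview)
The paper does not prove this theorem; it is quoted from \cite{HJ,FH}. Your overall strategy (realize $L_{i,a}^\pm$ as asymptotic limits of KR modules and read off the $q$-characters) is exactly the one used in \cite{HJ}, and your arguments for the $a$-independence via $\tau_a$ and for part (ii) via the limit of normalized KR $q$-characters are on the right track.

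There is, however, a genuine gap in your argument for the formula $\chi_q(L_{i,a}^+)=[\Psib_{i,a}]\chi(L_{i,a}^+)$. With the tower you write, $M_k=\prod_{s=0}^{k-1}Y_{i,aq_i^{-2s}}$ and $b_k=aq_i^{-2(k-1)}$, the \emph{top} of the KR module is fixed at $Y_{i,a}$ and the first lowering is by $A_{i,aq_i}^{-1}$; after dividing by $M_k$, the $\ell$-weight at depth $r$ is a fixed product $A_{i,aq_i}^{-1}A_{i,aq_i^{-1}}^{-1}\cdots$ of $A^{-1}$'s with parameters that do not move with $k$. These do \emph{not} telescope to the constant $q_j^{\omega(\alpha_j^\vee)}$: they give precisely the non-trivial $\ell$-weight structure of $L^-$, and in fact your normalized limit of highest $\ell$-weights is $\Psib_{i,aq_i}^{-1}$, not $\Psib_{i,a}$. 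So your tower produces $L^-$, not $L^+$, and the claimed ``pole cancellation'' does not occur.

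The correct mechanism in \cite{HJ} is different. One sends the \emph{top} of the KR module to infinity (equivalently, one works with the dual limiting procedure in $\mathcal{O}^*$ fixing the lowest $\ell$-weight, as in the proof sketch of Theorem~\ref{normlimit}). Then every $A^{-1}$ factor occurring at a fixed depth has its spectral parameter escaping to infinity as $k\to\infty$, and \emph{that} is why the action of the positive Drinfeld modes $h_{j,m}$ ($m>0$) becomes scalar in the limit, forcing $\phi_j^+(z)$ to act on each weight space by $q_j^{\omega(\alpha_j^\vee)}(\Psib_{i,a})_j(z)$. Finally, your alternative route via the duality $D$ is circular here: the compatibility of $D$ with ordinary characters is deduced in this paper \emph{from} part (i) (see the remark following Example~\ref{exdual}).
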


\begin{example} \label{ex-calcul}
{\rm
In the case $\Glie = \widehat{\sl}_2$, we have:
\[
\chi_q(L_{1,a}^+) = [(1 - za)]\sum_{r\geq 0} [-2r\omega_1]\text{ , }\chi_q(L_{1,a}^-) = \left[\frac{1}{(1-za)}\right]\sum_{r\geq 0}
  A_{1,a}^{-1}A_{1,aq^{-2}}^{-1}\cdots A_{1,aq^{-2(r-1)}}^{-1}. 
\]
Thus, although positive and negative prefundamental representations
have the same character
\[
\chi(L_{1,a}^+) =  \chi(L_{1,a}^-) = \sum_{r\geq 0} [-2r\omega_1],
\]
their $q$-characters are very different:
the normalized $q$-character $\widetilde{\chi}_q(L_{1,a}^+)$ is independent of the spectral parameter $a$, 
whereas $\widetilde{\chi}_q(L_{1,a}^-)$ does depend on $a$. 
}
\end{example}

\subsection{Baxter relations}

We can now state the \emph{generalized Baxter relations}:

\begin{thm}\label{relations}\cite[Theorem 4.8, Remark 4.10\footnote{The result in \cite{FH} is stated in terms of the $L_{i,a}^+$ and in terms of the $R_{i,a}^+$ such that $(R_{i,a}^+)^*\simeq L_{i,a}^-$. In the last case, the variable $Y_{i,a}$ has to be replaced by $[\omega_i][R_{i,aq_i^{-1}}]^+/[R_{i,aq_i}^+]$ to get $[V]$. That is why in the statement of Theorem \ref{relations} in terms of the $L_{i,a}^-$ the representations $[-\omega_i]\simeq [\omega_i]^*$ and $V^*$ appear.}]{FH} Let $V$ be a finite-dimensional representation of
  $U_q(\g)$. Replace in $\chi_q(V)$ each variable $Y_{i,a}$ by
  $[\omega_i]\frac{\left[L_{i,aq_i^{-1}}^+\right]}{\left[L_{i,aq_i}^+\right]}$ (resp. $[-\omega_i]\frac{\left[L_{i,aq_i^{-1}}^-\right]}{\left[L_{i,aq_i}^-\right]}$) and
  $\chi_q(V)$ by $[V]$ (resp. $[V^*]$). Then, multiplying by a common denominator, we get a
  relation in the Grothendieck ring $K_0(\mathcal{O})$.
\end{thm}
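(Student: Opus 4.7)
The plan is to reduce the claim to an identity of $q$-characters, leveraging the injectivity of $\chi_q:K_0(\mathcal{O})\to\mathcal{E}_\ell$ established just above. Write $\chi_q(V)=\sum_m c_m m$ as the finite sum of Laurent monomials $m=\prod_{i,a} Y_{i,a}^{m_{i,a}}$ occurring in $V$. Choose a finite product
$$
D=\prod_{i,a}[L_{i,aq_i}^+]^{N_{i,a}^+}\cdot\prod_{i,a}[L_{i,aq_i^{-1}}^+]^{N_{i,a}^-}
$$
with exponents large enough that after applying the substitution $\rho:Y_{i,a}\mapsto [\omega_i]\,[L_{i,aq_i^{-1}}^+]\,[L_{i,aq_i}^+]^{-1}$ to $\chi_q(V)$ and multiplying by $D$, every term is a nonnegative tensor product of positive prefundamental representations and one-dimensional modules $[\pm\omega_i]$. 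All such terms lie in $\mathcal{O}$, so both $[V]\cdot D$ and $\rho(\chi_q(V))\cdot D$ define classes in $K_0(\mathcal{O})$, and the theorem amounts to the equality of these two classes.

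By injectivity of $\chi_q$, it suffices to check $\chi_q([V]\cdot D)=\chi_q(\rho(\chi_q(V))\cdot D)$ in $\mathcal{E}_\ell$. The crucial input is Theorem~\ref{formuachar}(i): $\chi_q(L_{i,a}^+)=[\Psib_{i,a}]\cdot\chi(L_{i,a}^+)$ with $\chi(L_{i,a}^+)$ \emph{independent of} $a$. Consequently, when we apply $\chi_q$ to $\rho(\chi_q(V))\cdot D$, every ratio $\chi(L_{i,aq_i^{-1}}^+)/\chi(L_{i,aq_i}^+)$ coming from $\rho$ collapses to $1$. The surviving $\ell$-weight factor $[\Psib_{\omega_i}]\,[\Psib_{i,aq_i^{-1}}]\,[\Psib_{i,aq_i}]^{-1}$ matches precisely the image of $Y_{i,a}$ under the evaluation map $\mathcal{M}\hookrightarrow P_\ell^{\mathfrak{r}}$ recalled in Section~\ref{qcarsec}; here one uses that $[\omega_i]\in\mathcal{E}_\ell$ corresponds to the constant $\ell$-weight whose $i$-th component is $q_i$. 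Thus $\chi_q(\rho(m))=m$ for every monomial $m\in\mathcal{M}$, whence $\chi_q(\rho(\chi_q(V))\cdot D)=\chi_q(V)\cdot\chi_q(D)=\chi_q([V]\cdot D)$. The negative version (involving $V^*$ and $L_{i,a}^-$) is handled identically, using the equality $\chi(L_{i,a}^+)=\chi(L_{i,a}^-)$ also stated in Theorem~\ref{formuachar}(i).

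The only real obstacle is bookkeeping: one must check that a common denominator $D$ turning each substituted monomial into a legitimate tensor product of objects of $\mathcal{O}$ exists, which is immediate from the finiteness of the support of $\chi_q(V)$. Conceptually, the content of the proof is that the substitution $\rho$ is precisely the one compatible with the identification $Y_{i,a}=q_i\,\Psib_{i,aq_i^{-1}}\,\Psib_{i,aq_i}^{-1}$ of Definition~\ref{defineg} at the level of $\ell$-weights — a compatibility made possible by the remarkable $a$-independence of $\chi(L_{i,a}^\pm)$. So once Theorem~\ref{formuachar}(i) and the injectivity of $\chi_q$ are in hand, the generalized Baxter relations are essentially a formal consequence.
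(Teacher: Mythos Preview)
The paper does not itself prove this theorem; it is quoted from \cite{FH}. So there is no ``paper's own proof'' to compare against, and the question is simply whether your argument stands on its own.

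Your treatment of the \emph{positive} case is correct and is essentially the argument of \cite{FH}: by Theorem~\ref{formuachar}(i) the $q$-character of $L_{i,a}^+$ factors as $[\Psib_{i,a}]\cdot\chi_i$ with $\chi_i$ independent of $a$, so under $\chi_q$ the substitution $\rho(Y_{i,a})$ collapses to the $\ell$-weight $[\Psib_{\omega_i}\Psib_{i,aq_i^{-1}}\Psib_{i,aq_i}^{-1}]=[Y_{i,a}]$, and injectivity of $\chi_q$ finishes the proof. (One should also note that the total exponent of the $[L_{i,\cdot}^+]$'s in each $\rho(m)D$ equals that of $D$, so the common factor $\prod_i\chi_i^{N_i}$ really is the same on both sides; this is implicit in your write-up.)

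The \emph{negative} case, however, is \emph{not} ``handled identically''. The input you invoke, $\chi(L_{i,a}^+)=\chi(L_{i,a}^-)$, is an equality of ordinary characters, whereas what your argument would need is a factorization $\chi_q(L_{i,a}^-)=[\Psib_{i,a}^{-1}]\cdot(\text{$a$-independent part})$. This is false: Theorem~\ref{formuachar}(ii) only gives $\chi_q(L_{i,a}^-)\in[\Psib_{i,a}^{-1}](1+A_{i,a}^{-1}\ZZ[[A_{j,b}^{-1}]])$, and the paper explicitly stresses (Example~\ref{ex-calcul} and the sentence following it) that $\widetilde{\chi}_q(L_{1,a}^-)$ \emph{does depend on $a$}. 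Hence the ratio $\chi_q(L_{i,aq_i^{-1}}^-)/\chi_q(L_{i,aq_i}^-)$ does not collapse, and your computation breaks down. The footnote to the theorem indicates how \cite{FH} actually proceeds: one first proves the relation for the modules $R_{i,a}^+$ (which, like $L_{i,a}^+$, have $a$-independent normalized $q$-character), and then dualizes using $(R_{i,a}^+)^*\simeq L_{i,a}^-$; this is what produces the dual module $V^*$ and the sign change $[\omega_i]\mapsto[-\omega_i]$. So the negative statement requires this extra duality step, which is missing from your proposal.
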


\begin{example}\label{baxsl2}
{\rm Taking $\Glie = \widehat{\sl}_2$ and $V= L(Y_{1,a})$, we obtain 
the above-mentioned relation (\ref{relB2}), namely,
$$[L(Y_{1,a})][L_{1,aq}^+] = [L_{1,aq^{-1}}^+][\omega_1] + [L_{1,aq^3}^+][-\omega_1],$$
and
$$[L(Y_{1,aq^2})][L_{1,aq}^-] = [L_{1,aq^{-1}}^-][-\omega_1] + [L_{1,aq^3}^-][\omega_1].$$
}
\end{example}

\begin{rem}
{\rm
By our main Theorem \ref{isom} (and its dual version, see Remark \ref{dualmain}), 
these generalized Baxter relations 
are interpreted as relations in the cluster algebra $\mathcal{A}(G)$.
Moreover when ${\mathfrak g} = \widehat{\sl}_2$ the original Baxter relation (\ref{relB2})
gets interpreted as a Fomin-Zelevinsky mutation relation.
}
\end{rem}

The right-hand side of a generalized Baxter relation is an $\mathcal{E}$-linear combination 
of classes of tensor products of prefundamental representations. As shown in \cite{FH},
this can be seen as a tensor product decomposition in $K(\mathcal{O})$. Indeed we have:

\begin{thm}\cite{FH}\label{stensor} Any tensor product of positive
  (resp. negative) prefundamental representations $L_{i,a}^+$
  (resp. $L_{i,a}^-$)
  is simple. 
\end{thm}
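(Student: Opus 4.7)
My plan is to identify the tensor product with the simple highest $\ell$-weight module of the same highest $\ell$-weight, via a $q$-character comparison. By associativity and induction on $k$, it suffices to treat the two-factor case $V = L_{i,a}^+\otimes L_{j,b}^+$: once this is established, any $k$-factor product can be collapsed two factors at a time into a single simple module $L(\Psib_V)$ with $\Psib_V := \prod_j \Psib_{i_j,a_j}$. The negative case is parallel, using Theorem \ref{formuachar}(ii) in place of (i).

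For $V = L_{i,a}^+\otimes L_{j,b}^+$, let $v_r$ be the highest $\ell$-weight vector of the $r$-th factor and set $v = v_1\otimes v_2$, $\Psib_V = \Psib_{i,a}\Psib_{j,b}$. Using $\Delta(e_r) = e_r\otimes 1 + k_r\otimes e_r$ one finds $e_r v = 0$, and the Drinfeld coproduct formula for $\phi^+_{r,m}$ then shows that $v$ is a highest $\ell$-weight vector of $\ell$-weight $\Psib_V$. By Theorem \ref{formuachar}(i),
\[
\chi_q(V) \;=\; \chi_q(L_{i,a}^+)\,\chi_q(L_{j,b}^+) \;=\; [\Psib_V]\cdot\chi(L_{i,a}^+)\cdot\chi(L_{j,b}^+),
\]
so every $\ell$-weight in $V$ has the form $\Psib_V\cdot\Psib_\omega$ for some $\omega \leq 0$, and $\Psib_V$ itself appears with multiplicity one as the unique maximal $\ell$-weight.

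Let $M = U_q(\bo)v$, a highest $\ell$-weight submodule with unique simple quotient $L(\Psib_V)$. Since $\chi_q$ is injective on $K_0(\mathcal{O})$, it suffices to show $\chi_q(L(\Psib_V)) = \chi_q(V)$: then $L(\Psib_V)$, being a subquotient of $V$, already accounts for the full $q$-character of $V$, forcing $V = L(\Psib_V)$. To compute $\chi_q(L(\Psib_V))$ I would use the asymptotic realization from \cite{HJ} of $L_{i,a}^+$ as an inductive limit of normalized Kirillov-Reshetikhin modules $W_{k,a}^{(i)}$, so that this $q$-character arises as a limit of renormalized $q$-characters of tensor products of KR modules, yielding precisely $[\Psib_V]\chi(L_{i,a}^+)\chi(L_{j,b}^+)$. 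This last computation—equivalently, the cyclicity $M = V$—is the main obstacle: one must control the asymptotics of KR tensor products as $k\to\infty$ and rule out unexpected subquotients in the limit. The rigidity provided by Theorem \ref{formuachar}(i), which says that all $\ell$-weights of $L_{i,a}^+$ are constant $[\omega]$-shifts of the highest one, is what ultimately makes this asymptotic control feasible.
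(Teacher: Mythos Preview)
This theorem is quoted from \cite{FH} and not proved in the present paper, so there is no proof here to compare against. Your outline has the right shape for the positive case but leaves the decisive step undone. You correctly observe that by Theorem~\ref{formuachar}(i) every $\ell$-weight of $V=\bigotimes_k L_{i_k,a_k}^+$ is a constant shift $\Psib_V\cdot[\omega]$ of the highest one, hence every simple constituent of $V$ is isomorphic to $[\omega]\otimes L(\Psib_V)$ and the problem reduces to the character identity $\chi(L(\Psib_V))=\prod_k\chi(L_{i_k,a_k}^+)$. But you do not establish this identity: you say you ``would use the asymptotic realization from \cite{HJ}'' and immediately concede that this ``is the main obstacle''. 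That limit argument is precisely the content of the theorem, and carrying it out requires real work (controlling simplicity and characters through a limit of tensor products of KR modules, not just of a single KR tower). Note also that your inductive reduction to two factors does not close: after collapsing $L_{i,a}^+\otimes L_{j,b}^+$ into $L(\Psib_{i,a}\Psib_{j,b})$, the next step involves a factor which is no longer a prefundamental representation, so you must in any case treat all factors simultaneously.

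Your claim that ``the negative case is parallel, using Theorem~\ref{formuachar}(ii) in place of (i)'' is not correct. Part~(i) says the normalized $q$-character of $L_{i,a}^+$ is a pure character, independent of $a$; this is exactly what drives your reduction in the positive case. Part~(ii) gives a completely different structure: the $\ell$-weights of $L_{i,a}^-$ lie in the cone generated by the $A_{j,b}^{-1}$ and depend nontrivially on $a$ (compare Example~\ref{ex-calcul}). The simple constituents of a tensor product of negative prefundamentals are therefore not all twists of a single simple module, and the argument does not transfer by symmetry.
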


\subsection{$\ell$-characters of representations and duality}

Let $K_0^\pm$ be the $\mathcal{E}$-subalgebra of $K_0(\mathcal{O}^\pm)$ 
generated by the $[V_{i,a}]$, $[L_{i,a}^\pm]$ ($i\in I, a\in\CC^*$).

It follows from Theorem \ref{relations} that the fraction field of $\mathcal{E}[\ell_{i,a}]_{i\in I,a\in\CC^*}$
contains $K_0^\pm$. More precisely, each element of $K_0^\pm$ is a Laurent polynomial 
in $\mathcal{E}[\ell_{i,a}^{\pm 1}]_{i\in I,a\in\CC^*}$.
Note that the $\ell_{i,a}$ are algebraically independent (this is for example a consequence of Theorem \ref{stensor} which implies that the monomials in the $\ell_{i,a}$ are linearly independent in $K_0(\mathcal{O}^+)$). In particular the expansion in 
$\mathcal{E}[\ell_{i,a}^{\pm 1}]_{i\in I,a\in\CC^*}$ is unique and we can define the injective ring morphism
$$\chi_\ell : K_0^+\rightarrow \mathcal{E}[\ell_{i,a}^{\pm 1}]_{i\in I,a\in\CC^*}$$
which is called the $\ell$-character morphism. 

In the same way, by using the relations in Theorem \ref{relations} in terms of the prefundamental representations
$[L_{i,a}^-]$ and by setting $\ell_{i,a}' = [L_{i,a^{-1}}^-]$, we get an injective ring morphism
$$\chi_\ell' : K_0^-\rightarrow \mathcal{E}[(\ell_{i,a}')^{\pm 1}]_{i\in I,a\in\CC^*}$$

\begin{example} 
{\rm
We can reformulate Theorem \ref{relations} as follows. For a finite-dimensional representation $V$
of $U_q(\Glie)$, the $\ell$-character $\chi_\ell(V)$ (resp. $\chi_\ell'(V^*)$) is obtained from the 
$q$-character $\chi_q(V)$ by replacing each variable $Y_{i,a}$ by $[\omega_i]\ell_{i,aq_i^{-1}}\ell_{i,aq_i}^{-1}$ 
(resp. $[-\omega_i]\ell_{i,a^{-1}q_i}'(\ell_{i,a^{-1}q_i^{-1}}')^{-1}$). In fact, this can
  also be seen as a change of variables analogous to that of
  \cite[Section 5.2.2]{HL}. For example for $\Glie = \widehat{\sl}_2$, using Example \ref{baxsl2} we have
	\begin{equation}\label{compsl2}\chi_\ell([L(Y_{1,a})]) = \frac{[\omega_1]\ell_{1,aq^{-1}} + [-\omega_1]\ell_{1,aq^3}}{\ell_{1,aq}}
	\text{ and }\chi_\ell'([L(Y_{1,a})]) = \frac{[-\omega_1]\ell_{1,a^{-1}q^3}' + [\omega_1]\ell_{1,a^{-1}q^{-1}}'}{\ell_{i,a^{-1}q}'}.\end{equation}
}
  \end{example}

\begin{example}\label{exsl3} 
{\rm
Set $\Glie = \widehat{\sl}_3$. Let $\Psib = (\frac{1-zq^{-2}}{1-z},(1-qz)) = [-\omega_1]Y_{1,q^{-1}}\Psib_{2,q}$. 
Let us compute $\chi_\ell(L(\Psib))$.
For $k\geq 0$, let $W_k = \widetilde{L}(M_k)$ where $M_k = (Y_{2,q^{2k}}\cdots Y_{2,q^4}Y_{2,q^2})Y_{1,q^{-1}}$. We can prove as in \cite[Section 7.2]{HJ} that 
$(W_k)_{k\geq 0}$ gives rise to a limiting $U_q(\mathfrak{b})$-module $W_\infty$ whose $q$-character is 
$$\chi_q(W_\infty) = \Psib (1 + A_{1,1}^{-1}) \chi(L_{2,q}^+) = \chi_q(L_{2,q}^+)[-\omega_1]Y_{1,q^{-1}}+[-\omega_1 + \omega_2]Y_{1,q}^{-1}\chi_q(L_{2,q^{-1}}^+).$$
But it follows from Theorem \ref{normlimit} (ii) below that $L(\Psib)$ and $W_\infty$ have the same character 
$\text{lim}_{k\rightarrow +\infty}\chi(\widetilde{L}(M_k)) = \text{lim}_{k\rightarrow +\infty}\chi(\widetilde{L}(\overline{M_k}^{-1}))$. 
As $L(\Psib)$ is a subquotient of $W_\infty$, they are isomorphic.
Consequently
$$\chi_\ell(L(\Psib)) = \frac{\ell_{2,q}\ell_{1,q^{-2}} + [-\alpha_1] \ell_{1,q^2}\ell_{2,q^{-1}}}{\ell_{1,1}}.$$
}
\end{example}

\begin{prop}\label{fl} A representation $V$ in $\mathcal{O}^\pm$ satisfying $[V]\in K_0^\pm$ has finite length.
\end{prop}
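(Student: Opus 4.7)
My plan is to reduce the finite-length claim to a finite calculation via the Baxter relations (Theorem~\ref{relations}) and the simplicity of positive prefundamental tensor products (Theorem~\ref{stensor}). Since $[V]\in K_0^+$, the element $\chi_\ell([V])$ is by hypothesis a finite Laurent polynomial $\sum_{k=1}^N c_k\,\ell^{\alpha_k}$ in $\mathcal{E}[\ell_{i,a}^{\pm 1}]$. First I would multiply $[V]$ by a monomial $[L_0^+]$ in the $\ell_{i,a}$'s large enough to absorb every negative exponent appearing among the $\alpha_k$; this gives, in $K_0(\mathcal{O}^+)$,
\[
[L_0^+\otimes V]\;=\;\sum_{k=1}^N c_k\,[L_k^+],
\]
where each $L_k^+$ is a tensor product of positive prefundamentals, hence simple by Theorem~\ref{stensor}.

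Next I would argue that tensoring with the simple module $L_0^+$ preserves finite length in both directions, so $V$ has finite length if and only if $L_0^+\otimes V$ does, and it thus suffices to prove finite length for the right-hand side. Writing $c_k=\sum_\omega n^{(k)}_\omega[\omega]$, each twist $[\omega]\otimes L_k^+$ is the simple module whose highest $\ell$-weight is $\Psib_\omega$ times the highest $\ell$-weight of $L_k^+$. By Theorem~\ref{formuachar} the characters of the distinct $L_k^+$ are explicit enough that different pairs $(k,\omega)$ give different simple classes in $K_0(\mathcal{O}^+)$, and then one should read off the Jordan-H\"older multiplicities directly:
\[
[L_0^+\otimes V\,:\, [\omega]\otimes L_k^+]\;=\;n^{(k)}_\omega.
\]

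The hard part will be controlling the $\mathcal{E}$-coefficients $c_k$, since a priori any of them could have infinite support. My plan for this step is to exploit the fact that $L_0^+\otimes V$ is a genuine object of $\mathcal{O}^+$, so its $q$-character is an element of $\mathcal{E}_\ell$ satisfying the support condition of \S\ref{precato}; combined with the injectivity of $\chi_q$ and the explicit shape of the characters of the $L_k^+$ provided by Theorem~\ref{formuachar}, this should force the effective support $\{(k,\omega):n^{(k)}_\omega\neq 0\}$ to be finite. From this finiteness the finite length of $L_0^+\otimes V$, and hence of $V$, will follow.
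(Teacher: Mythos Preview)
Your approach is the same as the paper's: tensor with a product $L_0^+$ of positive prefundamentals to clear denominators, use that a strict chain of submodules in $V$ produces one in $L_0^+\otimes V$ (so finite length descends from $L_0^+\otimes V$ to $V$), and then try to read finite length off the polynomial expression $[L_0^+\otimes V]=\sum_k c_k[L_k^+]$ with each $[L_k^+]$ simple by Theorem~\ref{stensor}. The paper's proof is a one-line version of exactly this, stopping at the bare assertion ``then $L\otimes V$ has finite length''.

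The step you flag as the ``hard part'' is a genuine gap, and your proposed fix does not close it. For $\Glie=\widehat{\sl}_2$, set $c=\sum_{n\ge 0}[-n\alpha_1]\in\mathcal{E}$ and $V=\bigoplus_{n\ge 0}\bigl([-n\alpha_1]\otimes L^+_{1,a}\bigr)$. Then $V$ is an honest object of $\mathcal{O}^+$ (its $(-m\alpha_1)$-weight space has dimension $m+1$), it visibly has infinite length, and $[V]=c\cdot\ell_{1,a}\in K_0^+$ since $K_0^+$ is declared to be an $\mathcal{E}$-subalgebra. No denominator needs clearing here, and $\chi_q(V)=[\Psib_{1,a}]\sum_{m\ge 0}(m+1)[-m\alpha_1]$ satisfies every support condition defining $\mathcal{E}_\ell$; so neither membership in $\mathcal{E}_\ell$ nor the explicit shape of $\chi_q(L_k^+)$ from Theorem~\ref{formuachar} can force the support of $c$ to be finite. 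The paper's terse argument passes over this same point. In the only place the proposition is actually invoked (the proof of Proposition~\ref{inf}), the class $[V]$ is a single monomial in the $[V_{i,a}]$, $[L_{i,a}^+]$, $[\omega]$; for such $V$ the Baxter substitution of Theorem~\ref{relations} introduces only coefficients in the group ring $\ZZ[P_\QQ]\subset\mathcal{E}$, and then both your argument and the paper's go through without difficulty.
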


\begin{rem}{\rm An object in the category $\mathcal{O}$ has not necessarily finite length.
The subcategory of objects of finite length is not stable by tensor product \cite[Lemma C.1]{BJMST}.}
\end{rem}

\begin{proof} Suppose $[V]\in K_0^+$. Then there is a monomial $M$ in the $\ell_{i,a}$
such that $M \chi_\ell(V)$ is a polynomial in the $\ell_{i,a}$. There is a tensor product $L$ of positive prefundamental
representations such that $\chi_\ell(L) = M$. Then $L\otimes V$ has finite length, hence the result.
\end{proof}

\begin{prop}\label{duality} The identification of the variables $\ell_{i,a}$ and $\ell_{i,a}'$ induces a unique isomorphism of $\mathcal{E}$-algebras 
$$D : K_0^+\rightarrow K_0^-.$$ 
\end{prop}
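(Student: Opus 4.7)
The plan is to leverage the injective $\ell$-character embeddings $\chi_\ell$ and $\chi_\ell'$ constructed just above, and to transport the naive variable identification $\ell_{i,a}\leftrightarrow \ell'_{i,a}$ back to a ring morphism between $K_0^+$ and $K_0^-$. Concretely, I will work with the $\mathcal{E}$-algebra isomorphism $\sigma\colon\mathcal{E}[\ell_{i,a}^{\pm 1}]_{i\in I, a\in\CC^*}\stackrel{\sim}{\longrightarrow}\mathcal{E}[(\ell'_{i,a})^{\pm 1}]_{i\in I, a\in\CC^*}$ sending each $\ell_{i,a}$ to $\ell'_{i,a}$, and argue that $\sigma$ restricts to a bijection $\chi_\ell(K_0^+)\to\chi_\ell'(K_0^-)$. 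Granted this, the desired morphism is forced to be $D=(\chi_\ell')^{-1}\circ\sigma\circ\chi_\ell$, and its uniqueness is immediate from the injectivity of $\chi_\ell$ and $\chi_\ell'$ together with the fact that $K_0^+$ is generated over $\mathcal{E}$ by the classes $[L^+_{i,a}]$ and $[V_{i,a}]$.

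For the prefundamental generators the matching is immediate: $\sigma(\ell_{i,a})=\ell'_{i,a}=\chi_\ell'([L^-_{i,a^{-1}}])$, forcing $D([L^+_{i,a}])=[L^-_{i,a^{-1}}]$. The substantial step is to handle the fundamental classes $[V_{i,a}]\in K_0^+$. Theorem~\ref{relations} writes $\chi_\ell([V_{i,a}])$ as the image of $\chi_q(V_{i,a})$ under the substitution $Y_{j,b}\mapsto[\omega_j]\ell_{j,bq_j^{-1}}\ell_{j,bq_j}^{-1}$; dually, for any $W\in\mathcal{C}$ the same theorem expresses $\chi_\ell'([W^*])$ as the image of $\chi_q(W)$ under $Y_{j,b}\mapsto[-\omega_j]\ell'_{j,b^{-1}q_j}(\ell'_{j,b^{-1}q_j^{-1}})^{-1}$. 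A short direct check shows that $\sigma$ applied to the first substitution coincides with the second one precomposed with the involution $\tau\colon Y_{j,b}\mapsto Y_{j,b^{-1}}^{-1}$ of $\ZZ[Y_{j,b}^{\pm 1}]_{j\in I, b\in\CC^*}$. Consequently $\sigma(\chi_\ell([V_{i,a}]))$ lies in $\chi_\ell'(K_0^-)$ as soon as $\tau(\chi_q(V_{i,a}))$ belongs to the image of $\chi_q\colon K_0(\mathcal{C})\to \ZZ[Y_{j,b}^{\pm 1}]$; in that case $D([V_{i,a}])=[(V')^*]$ where $V'\in\mathcal{C}$ satisfies $\chi_q(V')=\tau(\chi_q(V_{i,a}))$.

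The one non-formal point, and in my view the main obstacle, is therefore the stability of $\chi_q(K_0(\mathcal{C}))$ under $\tau$. My preferred route is to realize $\tau$ as pull-back along the Hopf algebra involution $\hat\omega$ of Section~\ref{debut}: $\hat\omega$ preserves the finite-dimensional category $\mathcal{C}$ and sends the generating series $\phi^+_i(z)$ to $\phi^-_i(z^{-1})$, which at the level of $q$-characters amounts precisely to the substitution $Y_{j,b}\mapsto Y_{j,b^{-1}}^{-1}$; as a sanity check, for $\widehat{\sl}_2$ this yields $\tau(\chi_q(V_{1,a}))=\chi_q(V_{1,a^{-1}q^{-2}})$. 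Alternatively one can verify the stability case by case on each fundamental $V_{i,a}$ by inspecting the known formula for $\chi_q(V_{i,a})$ and identifying $\tau(\chi_q(V_{i,a}))$ with the $q$-character of an explicit (possibly spectral-parameter-shifted) fundamental representation. Once $\tau$-stability is in hand, the reverse inclusion $\sigma^{-1}(\chi_\ell'(K_0^-))\subseteq\chi_\ell(K_0^+)$ follows from the symmetric argument with $\tau^{-1}=\tau$, completing the construction and uniqueness of $D$.
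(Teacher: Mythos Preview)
Your argument is correct and follows essentially the same route as the paper's proof: both set up the identification $\sigma$, check it on prefundamental generators, and reduce the fundamental case to the stability of $\chi_q(K_0(\mathcal{C}))$ under the involution $\tau\colon Y_{j,b}\mapsto Y_{j,b^{-1}}^{-1}$. The only difference is in how this stability is justified: the paper simply quotes \cite[Lemma~4.10]{Herbis}, whereas you propose to obtain it from the pull-back along the Chari involution $\hat\omega$ of Section~\ref{debut} (which is in fact how that lemma is proved), so your version is slightly more self-contained but not genuinely different.
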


\begin{proof} The identification gives a well-defined injective ring morphism 
$$D' : K_0^+\rightarrow \mathcal{E}[(\ell_{i,a}')^{\pm 1}]_{i\in I, a\in\CC^*}.$$
It suffices to prove that its image is $\chi_l'(K_0^-)$. For $V = L_{i,b}^+$ a prefundamental,
its image by $D'$ is $\ell_{i,b}' = \chi_\ell'([L_{i,b^{-1}}^-])$. For $V = L(Y_{i,b})$ a fundamental, its image by
$D'$ is obtained from its $q$-character $\chi_q(V)$ by replacing each variable $Y_{i,a}$ by 
$[\omega_i]\frac{[L_{i,aq_i^{-1}}^-]}{[L_{i,aq_i}^-]}$, that is by
$[\omega_i]\frac{\ell_{i,a^{-1}q_i}'}{\ell_{i,a^{-1}q_i^{-1}}'}$. In the construction of 
$\chi'$, this corresponds to $Y_{i,a^{-1}}^{-1}$ (see the formula
in Theorem \ref{relations}). By \cite[Lemma 4.10]{Herbis},
there is a finite-dimensional representation $V'$ whose $q$-character is obtained from $\chi_q(V)$ by replacing each $Y_{i,a}$ by $Y_{i,a^{-1}}^{-1}$. Hence 
$D'(V) = \chi_\ell'((V')^*) \in \chi_\ell'(K_0^-)$. We have proved $\text{Im}(D') \subset \chi_\ell'(K_0(\mathcal{O}^-_{f,\ZZ}))$. 
Similarly, for $W$ such that $V = W^*$, we have $D'(W') = \chi_\ell'(V)$ and we get the other inclusion.
\end{proof}

\begin{example}\label{exdual}
{\rm
(i) For any $(i,a)\in I \times \CC^*$, we have 
$$D([L_{i,a}^+]) = [L_{i,a^{-1}}^-].$$

(ii) For any dominant monomial $m$, we have
$$D([L(m)]) = [L(m_1)]$$ 
where $m_1$ is obtained from $m$ 
by replacing each $Y_{i,a}$ by $Y_{i,a^{-1}}$. Indeed $L(m_1)$ is isomorphic
to $((L(m))')^*$ whose highest weight monomial is the inverse of the lowest weight 
monomial $(m_1)^{-1}$ of $L(m)'$. For example for $\Glie = \widehat{\sl}_2$, we have
$D([L(Y_{1,a})] = [L(Y_{1,a^{-1}})]$ as this can be observed by comparing the two formulas in (\ref{compsl2}).}
\end{example}

\begin{rem} 
{\rm
 The duality $D$ is compatible with characters by (i) in Theorem \ref{formuachar}.
However it is not compatible with $q$-characters (for example negative and positive prefundamental
representations have very different $q$-characters as explained above). 
}
\end{rem}

\begin{prop}\label{inf} An element in the Grothendieck group $K_0(\mathcal{O}^\pm)$ is a (possibly countable) sum of elements in $K_0^\pm$.
\end{prop}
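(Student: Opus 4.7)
My plan is to establish the statement by a weight-triangular inversion argument. The first step is to attach to every positive $\ell$-weight $\Psib$ a standard element of $K_0^+$ whose expansion in simple classes starts with $[L(\Psib)]$. By Definition~\ref{defineg}, any positive $\ell$-weight can be written $\Psib = M \cdot \Psib^+ \cdot \Psib_\omega$, with $M$ a dominant monomial in the $Y_{i,a}$, $\Psib^+$ a product of $\Psib_{i,b}$'s, and $\omega \in P_\Q$. I would set
\[
  T(\Psib) \,:=\, [L(M)] \cdot [L(\Psib^+)] \cdot [\omega].
\]
This lies in $K_0^+$: $[L(M)]$ is a $\Z$-linear combination of products of fundamental classes $[V_{i,a}]$ (a standard fact for finite-dimensional representations of $U_q(\g)$), and by Theorem~\ref{stensor} the module $L(\Psib^+)$ is simple and isomorphic to the tensor product of the corresponding positive prefundamental representations. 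Thus $T(\Psib)$ equals the class of the actual module $L(M) \otimes L(\Psib^+) \otimes [\omega]$, whose unique maximal $\ell$-weight is $\Psib$. Decomposing in $K_0(\mathcal{O})$ into simple classes yields a triangular relation
\[
  T(\Psib) \;=\; [L(\Psib)] \;+\; \sum_{\Psib' \,:\, \varpi(\Psib') < \varpi(\Psib)} m_{\Psib,\Psib'}\,[L(\Psib')],
\]
with $m_{\Psib,\Psib'} \in \Z_{\geq 0}$ and finitely many $\Psib'$ at every fixed weight.

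Given an arbitrary $\chi = \sum_{\Psibs} \lambda_{\Psibs} [L(\Psib)] \in K_0(\mathcal{O}^+)$, I would then invert this triangular system. Using the support condition, $\varpi(\supp(\chi))$ is contained in $\bigcup_j D(\lambda_j)$, which carries a level function with finite level sets, for instance $\ell(\nu) := \min_j \mathrm{ht}(\lambda_j - \nu)$ (finiteness holds since $\{\nu \leq \lambda : \mathrm{ht}(\lambda - \nu) = k\}$ is finite in the root lattice). Processing the support in increasing order of $\ell$, at each stage I would cancel the current top-weight coefficients by subtracting a finite $\mathcal{E}$-linear combination of the corresponding $T(\Psib)$'s, with the residue carried by the triangular relation to strictly lower weights. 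The accumulated countable sum $\sum_n c_n T(\Psib^{(n)}) \in K_0^+$ then represents $\chi$. The negative case is handled symmetrically, replacing $\Psib_{i,b}$, $L_{i,b}^+$, and $K_0^+$ by their minus counterparts.

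The main obstacle is to verify that this countable sum converges in $K_0(\mathcal{O}^+)$ in the sense of Section~\ref{precato}: for every fixed $\omega \in P_\Q$ only finitely many of the terms $c_n T(\Psib^{(n)})$ should contribute to the weight-$\omega$ part of $\sum_n |c_n|\,|\chi|(T(\Psib^{(n)}))$. This reduces to two finiteness facts: (i) the character of $T(\Psib)$ is supported in $D(\varpi(\Psib))$, and (ii) only finitely many $\nu \in \bigcup_j D(\lambda_j)$ satisfy $\nu \geq \omega$, so the iteration can touch any fixed weight only finitely often. A subsidiary technical point is to verify that every simple factor $L(\Psib')$ appearing in the triangular relation has positive highest $\ell$-weight, so that $T(\Psib)$ genuinely lies in $K_0(\mathcal{O}^+) \subset K_0(\mathcal{O})$; this can be checked directly using the explicit structure of $\chi_q(L(M))$ together with the formula $\chi_q(L(\Psib^+)) = [\Psib^+]\cdot\chi(L(\Psib^+))$ of Theorem~\ref{formuachar}(i), combined with the stability of $\mathcal{O}^+$ under extensions from Remark~\ref{rem3.10}.
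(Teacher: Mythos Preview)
Your proposal is correct and follows essentially the same approach as the paper: both attach to each positive $\ell$-weight $\Psib$ a ``standard'' product $T(\Psib)\in K_0^+$ (a monomial in $[L_{i,a}^+]$, $[V_{i,a}]$, $[\omega]$) with highest $\ell$-weight $\Psib$, observe that the transition matrix between the $T(\Psib)$ and the $[L(\Psib)]$ is unitriangular with respect to the weight ordering, and invert. You supply more detail than the paper does---in particular you explicitly flag the convergence of the countable sum and the subsidiary point that every simple constituent of $T(\Psib)$ again has positive highest $\ell$-weight (needed so that the inversion stays inside the set of positive $\ell$-weights); the paper's proof passes over these points tersely, invoking Proposition~\ref{fl} and unitriangularity without further comment.
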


\begin{proof} Let us prove it for $K_0(\mathcal{O}^+)$ (the proof is analog for $K_0(\mathcal{O}^-)$).
By definition, the positive $\ell$-weights label the simple modules in $\mathcal{O}^+$.
Moreover, an $\ell$-weight is positive if and only if it is a product of highest $\ell$-weights of representations 
$L_{i,a}^+$, $V_i(a)$ and $[\omega]$. 
This implies that for each positive $\ell$-weight $\Psib$, we can choose (and fix) a monomial
in the $[L_{i,a}^+]$, $[V_i(a)]$, $[\omega]$ such that the corresponding representation has highest $\ell$-weight equal to
$\Psib$.
Hence the positive $\ell$-weights also label the 
linearly independent family of these monomials in $K_0^+$. Expanding these monomials we get finite sums of classes
of simple modules by Proposition \ref{fl}. We get an (infinite) transition matrix
from the classes of simple objects in $\mathcal{O}^+$ to such products in $K_0^+$, and this matrix is unitriangular
(for the standard partial ordering with respect to weight). Hence the result.
\end{proof}

This implies immediately the following.

\begin{thm}\label{stable} $\mathcal{O}^+$ and $\mathcal{O}^-$ are monoidal and the morphism $D$ extends uniquely to an isomorphism of $\mathcal{E}$-algebras 
$$D : K_0(\mathcal{O}^+)\rightarrow K_0(\mathcal{O}^-).$$
\end{thm}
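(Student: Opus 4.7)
The plan is to bootstrap from the $\mathcal{E}$-algebra isomorphism $D:K_0^+\to K_0^-$ of Proposition \ref{duality} using the density statement of Proposition \ref{inf}, according to which every class in $K_0(\mathcal{O}^\pm)$ is a (possibly countable) sum of elements of $K_0^\pm$. Both assertions---monoidality of $\mathcal{O}^\pm$ and the extension of $D$---will then follow from this density together with character control.

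For the monoidality of $\mathcal{O}^+$ (the argument for $\mathcal{O}^-$ being symmetric), I would take two simple modules $L,L'$ in $\mathcal{O}^+$ and expand $[L]=\sum_i A_i$, $[L']=\sum_j B_j$ with $A_i,B_j\in K_0^+$ via Proposition \ref{inf}. The product becomes $[L\otimes L']=\sum_{i,j}A_iB_j$, and since $K_0^+$ is by construction an $\mathcal{E}$-subalgebra of $K_0(\mathcal{O}^+)$---its generators closing under multiplication thanks to Theorem \ref{stensor} combined with the Baxter relations of Theorem \ref{relations}---each $A_iB_j$ lies in $K_0^+$. Now Proposition \ref{fl} forces each such element to be the class of a finite-length representation in $\mathcal{O}^+$, hence a finite $\mathbb{Z}$-linear combination of classes of simples in $\mathcal{O}^+$ (via the unitriangular transition matrix implicit in the proof of Proposition \ref{inf}). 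Hence $[L\otimes L']$ is a countable sum of classes of simples with positive highest $\ell$-weight, so every composition factor of $L\otimes L'$ has positive highest $\ell$-weight, proving $L\otimes L'\in \mathcal{O}^+$.

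For the extension of $D$ I would set $D\bigl(\sum_i A_i\bigr):=\sum_i D(A_i)$ on the countable-sum expansion of any element of $K_0(\mathcal{O}^+)$ provided by Proposition \ref{inf}. The crucial point is that $D$ preserves characters on generators: by Theorem \ref{formuachar}(i) the characters of $L_{i,a}^+$ and $L_{i,a^{-1}}^-$ coincide and are independent of the spectral parameter, while $V_{i,a}$ and $V_{i,a^{-1}}$ are isomorphic as $U_q(\gb)$-modules (Example \ref{exdual}(ii)). Hence $D$ is character-preserving throughout $K_0^+$, and the support-boundedness condition underlying countable sums in $\mathcal{E}$ transfers verbatim from $K_0(\mathcal{O}^+)$ to $K_0(\mathcal{O}^-)$, so $\sum_i D(A_i)$ is a bona fide element of $K_0(\mathcal{O}^-)$. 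A symmetric construction starting from $D^{-1}:K_0^-\to K_0^+$ yields the two-sided inverse, giving an $\mathcal{E}$-algebra isomorphism; uniqueness is immediate since any $\mathcal{E}$-linear map compatible with countable sums is determined by its restriction to $K_0^+$.

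The main subtle step will be the character-theoretic bookkeeping: verifying that two different expansions of the same class in $K_0(\mathcal{O}^+)$ produce equal images under the extended $D$, and that the multiplication on $K_0(\mathcal{O}^\pm)$ really is the distributive extension of the multiplication on $K_0^\pm$ to countable sums. Both reduce to controlling $|\chi|$ under countable sums and products exactly as for the Grothendieck ring $K_0(\mathcal{O})$ recalled in Section \ref{precato}, and once this is in place the remainder of the argument is essentially formal.
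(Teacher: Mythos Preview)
Your proposal is correct and follows essentially the same route as the paper: both arguments use Proposition~\ref{inf} to write classes of simples as countable sums in $K_0^\pm$, deduce monoidality from closure of $K_0^\pm$ under multiplication, and extend $D$ by linearity over these countable sums, with the inverse constructed symmetrically. Your version is in fact more careful than the paper's, which is quite terse: you make explicit the character-preservation of $D$ (via Theorem~\ref{formuachar}(i) and Example~\ref{exdual}) needed to guarantee that $\sum_i D(A_i)$ is a legitimate countable sum in $K_0(\mathcal{O}^-)$, and you flag the well-definedness check, both of which the paper leaves implicit.
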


\begin{rem}\label{dualmain}{\rm Consequently, our main result in Theorem \ref{isom} may also be written in terms
of the subcategory $\mathcal{O}^-_{2\ZZ}$ of $\mathcal{O}^-_\ZZ$ whose Grothendieck ring is $D(K_0(\mathcal{O}^+_{2\ZZ}))$.}
\end{rem}

\begin{proof} For $L$, $L'$ simple in $\mathcal{O}^+$, we may consider a decomposition of $[L]$, $[L']$ as a countable sum of elements  in $K_0^+$ as in Proposition \ref{inf}. Then $[L][L']$ is also such a countable sum and is in $K_0(\mathcal{O}^+)$.
Hence $\mathcal{O}^+$ is monoidal. This is analog for $\mathcal{O}^-$. 

The isomorphism of Proposition \ref{duality} is extended by linearity to $K_0(\mathcal{O}^+)$ by using Proposition \ref{inf}. This map $D : K_0(\mathcal{O}^+)\rightarrow K_0(\mathcal{O}^-)$ is an injective ring morphism. The ring morphism $D^{-1}:K_0(\mathcal{O}^-)\rightarrow K_0(\mathcal{O}^+)$ is constructed in the same way and so $D$ is a ring isomorphism.
\end{proof}

\begin{prop}\label{debutprop} 
A simple object in $\mathcal{O}^\pm$ is a subquotient of a tensor product of two simple representations $V\otimes L$ where $V$ is finite-dimensional and $L$ is a tensor product of positive (resp. negative) prefundamental representations.
\end{prop}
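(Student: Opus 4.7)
The plan is to decompose the highest $\ell$-weight of a simple object in $\mathcal{O}^+$ according to the three families of generators in Definition \ref{defineg}, and then realize it as the highest $\ell$-weight of a tensor product of two simples of the required shape.

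Let $L(\Psib)$ be a simple object in $\mathcal{O}^+$; by Definition \ref{newcato}, $\Psib$ is positive. Grouping the generators of Definition \ref{defineg} by type, I would write
$$\Psib \;=\; [\omega]\cdot M\cdot \prod_{(i,a)}\Psib_{i,a}^{v_{i,a}},$$
where $\omega\in P_\QQ$, where $M=\prod_{(j,b)}Y_{j,b}^{u_{j,b}}$ is a dominant monomial in the variables $Y_{j,b}$, and where the exponents $u_{j,b}$ and $v_{i,a}$ are non-negative integers, almost all vanishing. This is just the observation that the submonoid of $P_\ell^{\mathfrak{r}}$ generated by the positive generators of Definition \ref{defineg} separates (up to the abelian group factor $[\omega]$) into a $Y$-part and a $\Psib$-part.

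I would then set
$$V\;:=\;[\omega]\otimes L(M)\qquad\text{and}\qquad L\;:=\;\bigotimes_{(i,a)}(L_{i,a}^+)^{\otimes v_{i,a}}$$
(the latter taken in an arbitrary fixed order). The module $L(M)$ is the finite-dimensional simple $U_q(\bo)$-module associated to the dominant monomial $M$ recalled in Section \ref{catO}, and tensoring with the one-dimensional module $[\omega]$ preserves simplicity and finite-dimensionality, so $V$ is a simple finite-dimensional representation. The module $L$ is simple by Theorem \ref{stensor}. The tensor product of the two highest $\ell$-weight vectors in $V\otimes L$ generates a highest $\ell$-weight submodule whose $\ell$-weight is the product of the highest $\ell$-weights of $V$ and $L$, namely $\Psib$ itself; hence the simple quotient $L(\Psib)$ appears as a subquotient of $V\otimes L$.

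The $\mathcal{O}^-$ case is parallel: using the negative version of Definition \ref{defineg}, write $\Psib=[\omega]\cdot M\cdot \prod_{(i,a)}\Psib_{i,a}^{-v_{i,a}}$, replace each $L_{i,a}^+$ by $L_{i,a}^-$ in the definition of $L$, and invoke Theorem \ref{stensor} again to obtain that $L$ is simple. There is no serious obstacle: the argument is essentially combinatorial, and the only substantive ingredient is the simplicity of arbitrary tensor products of same-sign prefundamental representations provided by Theorem \ref{stensor}, which is exactly what forces $L$ to be a single simple module rather than merely an extension.
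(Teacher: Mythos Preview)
Your proof is correct and follows essentially the same approach as the paper: factor the positive highest $\ell$-weight as $[\omega]\cdot M\cdot\prod\Psib_{i,a}^{v_{i,a}}$, set $V=[\omega]\otimes L(M)$ and $L=\bigotimes(L_{i,a}^+)^{\otimes v_{i,a}}$, and invoke Theorem~\ref{stensor} for the simplicity of $L$. You spell out slightly more explicitly than the paper why $L(\Psib)$ occurs as a subquotient of $V\otimes L$, but the argument is otherwise identical.
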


\begin{proof} 
Let $L(\Psib)$ be simple in $\mathcal{O}^\pm$. By definition, its highest $\ell$-weight is a product
of highest $\ell$-weights of representations $[\omega]$, $L_{i,a}^\pm$, $V_i(a)$ where $\omega\in P_\Q$, $i\in I$,
$a\in\CC^*$. So $\Psib$ can be factorized in
\begin{equation}\label{facthw}\Psib = [\omega] \times m \times \prod_{i\in I, a\in\CC^*}\Psib_{i,a}^{u_{i,a}},\end{equation}
where $\omega\in P_\Q$, $\pm u_{i,a}\geq 0$ and $m\in\mathcal{M}$ is a dominant monomial. 
The result follows by taking
$V = [\omega]\otimes L(m)$ and
$L = \bigotimes_{i\in I, a\in\CC^*}(L_{i,a}^{\pm})^{\otimes | u_{i,a}|}$ which is simple by Theorem \ref{stensor}.\end{proof}

\begin{prop}\label{cone} The normalized $q$-character of a simple object in $\mathcal{O}^-$ belongs to
the ring $\ZZ[[A_{i,a}^{-1}]]_{i\in I, a\in\CC^*}$.
\end{prop}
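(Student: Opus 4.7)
By Proposition~\ref{debutprop} applied to $\mathcal{O}^-$, the simple object $L(\Psib)$ is a subquotient of a tensor product $V\otimes L$, where $V = [\omega]\otimes L(m)$ for some $\omega\in P_\Q$ and some dominant monomial $m\in\mathcal{M}$, and $L = \bigotimes_{(i,a)}(L_{i,a}^-)^{\otimes n_{i,a}}$ is a finite tensor product of negative prefundamentals, which is simple by Theorem~\ref{stensor}. The highest $\ell$-weight of $V\otimes L$ is precisely $\Psib$, realizing the factorization~(\ref{facthw}). I plan first to prove that $\widetilde{\chi}_q(V\otimes L)\in \ZZ[[A_{j,b}^{-1}]]_{j\in I,b\in\CC^*}$, then to transfer this property to the simple subquotient $L(\Psib)$.

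For the first step, since $\Psib$ is the unique $\ell$-weight of maximal weight in $V\otimes L$, with one-dimensional $\ell$-weight space, normalized $q$-characters are multiplicative and
\[
\widetilde{\chi}_q(V\otimes L) = \widetilde{\chi}_q(L(m))\cdot \prod_{(i,a)}\widetilde{\chi}_q(L_{i,a}^-)^{n_{i,a}}.
\]
By Theorem~\ref{formuachar}(ii), each factor $\widetilde{\chi}_q(L_{i,a}^-)$ lies in $1 + A_{i,a}^{-1}\ZZ[[A_{j,b}^{-1}]]$, and finite products of such series still lie in $\ZZ[[A_{j,b}^{-1}]]$, because the weight grading makes each homogeneous component a finite sum. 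For the finite-dimensional factor, the one-dimensional shift by $[\omega]$ is absorbed into the normalization, so $\widetilde{\chi}_q(V) = \widetilde{\chi}_q(L(m))$; by the standard ``lower-triangularity'' property of $q$-characters of simple finite-dimensional $U_q(\Glie)$-modules (originating with Frenkel--Mukhin), every $\ell$-weight of $L(m)$ has the form $m\prod A_{j,b}^{-v_{j,b}}$ with $v_{j,b}\in\NN$, so $\widetilde{\chi}_q(L(m))\in \ZZ[A_{j,b}^{-1}]$. Combining both contributions yields $\widetilde{\chi}_q(V\otimes L)\in \ZZ[[A_{j,b}^{-1}]]$.

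For the second step, the $\ell$-weight spaces of $L(\Psib)$ inject, as generalized eigenspaces for the commuting family $\{\phi_{i,m}^+\}$, into those of $V\otimes L$. Hence the support of $\chi_q(L(\Psib))$ is contained in the support of $\chi_q(V\otimes L)$, which we have just described as the set of $\ell$-weights of the form $\Psib\prod A_{j,b}^{-u_{j,b}}$ with $u_{j,b}\in\NN$. Dividing by $[\Psib]$ gives $\widetilde{\chi}_q(L(\Psib))\in \ZZ[[A_{j,b}^{-1}]]$ as required. The step demanding most care is invoking the lower-triangularity theorem for general simple finite-dimensional modules; the subquotient comparison of $\ell$-weight supports is purely formal once it is phrased inside the completed Grothendieck-ring setup of Section~\ref{precato}.
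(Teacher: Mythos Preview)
Your proof is correct and follows essentially the same route as the paper: it combines Proposition~\ref{debutprop} with the known cone property for finite-dimensional simples (Frenkel--Mukhin, cited as \cite{Fre2}) and for negative prefundamentals (Theorem~\ref{formuachar}(ii), i.e.\ \cite[Theorem~6.1]{HJ}), and then passes to the subquotient. The paper's version is simply the two-line compression of your argument.
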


\begin{proof} The result is known for the category $\mathcal{C}$ by \cite{Fre2}. For negative prefundamental representations, the result is known by \cite[Theorem 6.1]{HJ}. 
Then the general result follows from Proposition \ref{debutprop}.
\end{proof}

Note that this property is not satisfied in $\mathcal{O}^+$, see Example~\ref{ex-calcul}.

\section{Proof of the main Theorem}\label{mainproof}

\subsection{Examples of mutations}\label{mutaex}

\subsubsection{} \label{example-sl2}
Let $\g = \widehat{\sl}_2$. We display a sequence of 3 mutations starting from the initial seed of $\A(G)$.
The mutated cluster variables are indicated by a framebox. 
$$\xymatrix{ {}\save[]+<0cm,2ex>*{\vdots}\restore&{}\save[]+<0cm,2ex>*{\vdots}\restore&{}\save[]+<0cm,2ex>*{\vdots}\restore&{}\save[]+<0cm,2ex>*{\vdots}\restore
\\\ar[u] z_4&\ar[u] z_4&\ar[u] z_4&\ar[u]\ar[d] z_4
\\\ar[u] z_2&\ar[u] z_2\ar[d]&\ar[u] \fbox{$z_2$}\ar@/^{2pc}/[dd]&[L(Y_{q^{-3}}Y_{q^{-1}}Y_q)]\ar@/^{4pc}/[ddd]
\\\ar[u]\fbox{$z_0$}&\ar[d][L(Y_{q^{-1}})]&[L(Y_{q^{-1}})]&[L(Y_{q^{-1}})]
\\\ar[u]z_{-2}&\ar@/^{2pc}/[uu]\fbox{$z_{-2}$}&[L(Y_{q^{-1}}Y_{q^{-3}})]\ar[u]\ar[d]&[L(Y_{q^{-1}}Y_{q^{-3}})]\ar[u]\ar@/^{2pc}/[uu]
\\\ar[u]z_{-4}&\ar[u]z_{-4}&z_{-4}\ar@/^{3pc}/[uuu]&z_{-4}\ar@/^{4pc}/[uuuu]
\\{}\save[]+<0cm,4ex>*{\vdots}\restore&{}\save[]+<0cm,4ex>*{\vdots}\restore&{}\save[]+<0cm,4ex>*{\vdots}\restore&{}\save[]+<0cm,4ex>*{\vdots}\restore}$$

\subsubsection{} \label{example-sl3} Let $\g = \widehat{\sl}_3$. Example \ref{exsl3} can be reformulated as a one-step mutation from the
initial seed, as follows:
$$\xymatrix{ &{}\save[]+<0cm,2ex>*{\vdots}\restore&&{}\save[]+<0cm,2ex>*{\vdots}\restore
\\{}\save[]+<0cm,2ex>*{\vdots}\restore&\ar[u]\ar[dl] z_{1,2} & {}\save[]+<0cm,2ex>*{\vdots}\restore  &\ar[u]\ar[dd] z_{1,2}
\\z_{2,1}\ar[dr]&&z_{2,1}&
\\&\fbox{$z_{1,0}$}\ar[uu]\ar[dl]&&[L(\Psib)][\alpha_1/2]\ar[ul]\ar[dd]
\\z_{2,-1}\ar[uu]\ar[dr]&&z_{2,-1}\ar[ru]&
\\{}\save[]+<0cm,4ex>*{\vdots}\restore&z_{1,-2}\ar[uu]&{}\save[]+<0cm,4ex>*{\vdots}\restore&z_{1,-2}\ar@/_{2.5pc}/[uuuu]
\\&{}\save[]+<0cm,4ex>*{\vdots}\restore&&{}\save[]+<0cm,4ex>*{\vdots}\restore}$$
Recall that here $\Psib = [-\omega_1]Y_{1,q^{-1}}\Psib_{2,q}$, as in Example \ref{exsl3}.

\subsubsection{}
For an arbitrary $\g$, let us calculate the first mutation relation for each cluster variable $z_{i,r}$ of the initial seed,
generalizing \ref{example-sl3}. 
We denote by $z_{i,r}^*$ the new cluster variable obtained by mutating $z_{i,r}$. Then we have
$$z_{i,r}^*z_{i,r} = \prod_{j,C_{j,i}\neq 0} z_{j,r-d_jC_{j,i}} + \prod_{j,C_{j,i}\neq  0}z_{j,r + d_jC_{j,i}}.$$
We claim that $z_{i,r}^* = [\lambda][L(\Psib)]$ where 
$$\Psib = [-\omega_i]Y_{i,q^{r-d_i}}\prod_{j,C_{j,i}<0}\Psib_{j,q^{r-d_jC_{j,i}}},\text{ and }\lambda = \frac{\alpha_i}{2} - r \sum_{j,C_{j,i} < 0}\frac{\omega_j}{2 d_j}.$$
As in \ref{example-sl3}, this is derived from the explicit $q$-character formula
\begin{equation}\label{expqchar}\chi_q(L(\Psib)) = \left[\Psib\right](1 +  A_{i,q^r}^{-1})\prod_{j,C_{j,i}<0}\chi_j,\end{equation}
where $\chi_j = \chi(L_{j,a}^+)$ does not depend on $a$. (By considering $L(\Psib)\otimes L(Y_{i,q^{r-d_i}}\Psib^{-1})$
and $L(\Psib)\otimes L_{i,q^r}^+$ we prove that the multiplicities in $\chi_q(L(\Psib))$ are larger than in the right-hand side 
of (\ref{expqchar}).
The reverse inequality is established by considering $L(\overline{M_R}^{-1})\otimes L(\Psib \overline{M_R})$ where the 
monomials $M_R$ are defined
for $\overline{\Psib}^{-1}$ as in  Theorem \ref{normlimit}.)
The mutation relation thus becomes the following relation in the Grothendieck ring $K_0(\mathcal{O}^+)$:
\begin{equation}\label{firstmutation}\left[L(\Psib)\otimes L_{i,q^r}^+\right] = \left[\bigotimes_{j,C_{j,i}\neq 0}L_{j,q^{r-d_jC_{j,i}}}^+\right] 
+ [-\alpha_i]\left[\bigotimes_{j,C_{j,i}\neq 0}L_{j,q^{r+d_jC_{j,i}}}^+\right].\end{equation}
By Theorem \ref{stensor}, the two terms on the right-hand side are simple.
Hence this is the decomposition of the class of the tensor product into simple modules.

\subsection{Proof of Theorem \ref{isom}}

We identify the $\mathcal{E}$-algebras $\mathcal{E}[z_{i,r}^{\pm 1}]_{(i,r)\in V}$ and $\mathcal{E}[\ell_{i,q^r}^{\pm 1}]_{(i,r)\in V}$ as in Remark \ref{ident}. 

\medskip

\begin{prop}\label{unsens}
We have:  $K_0^+\cap K_0(\mathcal{O}^+_{ 2\ZZ}) \subseteq \mathcal{A}(G)\otimes_\ZZ \mathcal{E}$.
\end{prop}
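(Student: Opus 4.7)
The plan is to reduce the containment to a generating family for the intersection and then to realize each generator as an element of the cluster algebra using the explicit mutation computations of Section~\ref{mutaex}.

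First, I would identify an $\mathcal{E}$-algebra generating set of $K_0^+\cap K_0(\mathcal{O}^+_{2\ZZ})$. The ring $K_0^+$ is generated by the $[V_{i,a}]$ and the $[L_{i,a}^+]$; via the injective $\ell$-character morphism $\chi_\ell$, it embeds into $\mathcal{E}[\ell_{i,a}^{\pm 1}]_{i\in I,\,a\in\CC^*}$. Using the block decomposition of $\chi_\ell(K_0^+)$ with respect to $q$-orbits of spectral parameters (each $\chi_\ell([V_{i,a}])$ involves only variables in the orbit of $a$, so generators belonging to different orbits are algebraically independent), one deduces that any $x\in K_0^+$ whose $\chi_\ell$-image is supported on the $2\ZZ$-lattice is already a polynomial in the ``compatible'' generators, namely the $[L_{i,q^r}^+]$ with $(i,r)\in V$ and the $[V_{i,q^s}]$ whose Drinfeld parameters $q^{s\pm d_i}$ both correspond to vertices of $V$.

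Next, I would check each generator individually. For $[L_{i,q^r}^+]$ with $(i,r)\in V$ the identification (\ref{identi}) gives $[L_{i,q^r}^+]\equiv z_{i,r}\otimes[\tfrac{r}{2d_i}\omega_i]\in\mathcal{A}(\G)\otimes_\ZZ\mathcal{E}$, so there is nothing to do. For a compatible fundamental class $[V_{i,q^{r-d_i}}]$ I would apply the one-step mutation identity (\ref{firstmutation}) at vertex $(i,r)\in V$. Under (\ref{identi}), this identity is precisely the Fomin--Zelevinsky exchange relation, so the compound class $[L(\Psib)]$ (for the $\Psib$ displayed in \S\ref{mutaex}) coincides with the mutated cluster variable $z_{i,r}^{*}$ up to an $\mathcal{E}$-scalar. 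Combined with the tensor-product factorization
\[
 [L(\Psib)]\;=\;[-\omega_i]\,[V_{i,q^{r-d_i}}]\prod_{j:\,C_{j,i}<0}[L_{j,q^{r-d_jC_{j,i}}}^+],
\]
which follows from the $q$-character formula (\ref{expqchar}) and Theorem~\ref{stensor}, the class $[V_{i,q^{r-d_i}}]$ is then recovered as an $\mathcal{E}$-multiple of a cluster variable divided by initial cluster variables.

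The main obstacle will be to conclude that $[V_{i,q^{r-d_i}}]$ genuinely lies in $\mathcal{A}(\G)\otimes_\ZZ\mathcal{E}$ rather than merely in the ambient Laurent polynomial ring $\mathcal{E}[z_{i,r}^{\pm 1}]_{(i,r)\in V}$. Since a single one-step mutation only delivers the compound class $[L(\Psib)]$ as a cluster variable, recovering each individual fundamental (and more generally Kirillov--Reshetikhin) class requires a finer analysis via iterated mutation sequences analogous to those illustrated for $\widehat{\sl}_2$ in \S\ref{mutaex} and relying on the cluster-theoretic input from \cite{HL} that realizes such modules as cluster variables in the finite-dimensional analogue.
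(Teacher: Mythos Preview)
Your identification of the generating set and the handling of the prefundamental classes $[L_{i,q^r}^+]$ are fine. The real issue is the claimed tensor-product factorization
\[
[L(\Psib)] \;=\; [-\omega_i]\,[V_{i,q^{r-d_i}}]\prod_{j:\,C_{j,i}<0}[L_{j,q^{r-d_jC_{j,i}}}^+].
\]
This is false outside type $A_1$. Compare the $q$-character formula~(\ref{expqchar}), where the bracketed factor is $(1+A_{i,q^r}^{-1})$, with the $q$-character of the fundamental $V_{i,q^{r-d_i}}$, which for $\gb\neq\sl_2$ contains further monomials $A_{i,q^r}^{-1}A_{j,\bullet}^{-1}\cdots$ (e.g.\ in $\widehat{\sl}_3$, $\chi_q(V_{1,q^{-1}})$ has three terms). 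Theorem~\ref{stensor} only asserts simplicity of tensor products of prefundamentals; it says nothing about a tensor product involving a fundamental, and indeed in Example~\ref{exsl3} the module $L(\Psib)$ is a genuine asymptotic limit, not such a product. So the one-step mutation does not isolate $[V_{i,q^{r-d_i}}]$ as (cluster variable)/(initial variables), and the ``main obstacle'' you name is not the right one.

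The paper's proof bypasses the one-step mutation entirely. It embeds the semi-infinite cluster algebra $\A(G^-)$ of \cite{HL} (and an ice-quiver variant $\A(H^-)$ with frozen variables $f_j$) into $\A(\Gamma)$, and uses a $\ZZ^k$-grading argument to compare a cluster variable $y$ of $\A(H^-)$ with the image under a substitution map $F$ of the corresponding cluster variable $x$ of $\A(G^-)$: one has $y=F(x)\prod_j f_j^{a_j}$ for a uniquely determined monomial in the frozens. Then \cite[Theorem~3.1]{HL} gives that $\chi_q(V_{i,q^r})$ is a cluster variable of $\A(G^-)$ for $r\ll 0$, and \cite[Corollary~6.14]{Fre2} ensures no frozen variable appears, so $[V_{i,q^r}]$ is actually a cluster variable of $\A(\Gamma)$. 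Finally the built-in shift invariance $z_{i,s}\mapsto z_{i,s+2d_i}$ of $\A(\Gamma)$ removes the restriction $r\ll 0$. Your closing sentence gestures in this direction, but the actual argument is this comparison with $\A(G^-)$ via grading, not a further analysis of one-step mutations.
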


Note that by Proposition \ref{inf}, this implies $K_0(\mathcal{O}^+_{ 2\ZZ})\subseteq \mathcal{A}(G)\hat{\otimes}_\ZZ \mathcal{E}$

\begin{proof}
Clearly, $\ell_{i,q^s} = z_{i,s}[(s/2d_i)\omega_i]$ belongs to $\mathcal{A}(G)_\ZZ \otimes_\ZZ \mathcal{E} $ for any $(i,s)\in V$.
By Proposition \ref{inf}, it remains to show that $\left[V_{i,q^{r}}\right]$ belongs to $\mathcal{A}(G)_\ZZ \otimes_\ZZ \mathcal{E} $ for any $(i,r)\in W$.

Remember from \S\ref{defclus} that we denote by $G$ the same quiver as $\G$ with vertices labeled by $W$ instead of $V$. 
In the next discussion, we divide the vertices of $G$ and $\G$ into \emph{columns}, as in \cite[Example 2.3]{HL}, and we denote
by $k$ the number of columns.
As in \cite[\S2.1.3]{HL}, consider the full subquiver $G^-$ of $G$ whose vertex set is $W^-=\{(i,r)\in W \mid r\le 0\}$. 
The definition of $G$ shows that there is only 
one vertex of $G\setminus G^-$ in each column which is connected to $G^-$ by some arrow. Let $H^-$ denote the ice quiver
obtained from $G^-$ by adding these $k$ vertices together with their connecting arrows, and by declaring the new vertices frozen.

Consider the cluster algebras $\A(H^-)$ (with $k$ frozen variables) and $\A(G^-)$ (with no frozen variable).
It follows from the definitions that $\A(H^-)$ can be regarded as a subalgebra of $\A(G)$, and $\A(G^-)$
is the coefficient-free counterpart of $\A(H^-)$, studied in \cite{HL}.
Let $f_j \ (1\le j\le k)$ be the frozen variable of $\A(H^-)$ sitting in column $j$. The cluster of the initial seed
of $\A(H^-)$ thus consists of the frozen variables $f_j \ (1\le j\le k)$ and the ordinary cluster variables $z_{i,s}\ ((i,s)\in V^-)$,
where $V^- = \{(i,s)\in V \mid s+d_i\le 0\}$.
Let us denote by $u_{i,s}\ ((i,s) \in V^-)$ the cluster variables of the initial seed of $\A(G^-)$.
We can use a similar change of variables as in \cite[\S2.2.2]{HL}:
\[
y_{i,r} = u_{i,r-d_i} \quad\mbox{if $r+d_i > 0$,}
\qquad
y_{i,r} = \frac{u_{i,r-d_i}}{u_{i,r+d_i}} \quad \mbox{otherwise}.
\]
Let 
\[
F\colon \Z[ u_{i,s}^{\pm 1} \mid(i,s) \in V^-] \longrightarrow \Z[f_j^{\pm1},z_{i,s}^{\pm1} \mid  1\le j\le k,\ (i,s)\in V^-]
\]
be the ring homomorphism defined by 
\[
F(u_{i,s}) = \frac{z_{i,s}}{f_j} \quad \mbox{if $(i,s)$ sits in column $j$.} 
\]
Thus
\[
F(y_{i,r}) = \frac{z_{i,r-d_i}}{f_j} \ \mbox{if $r+d_i > 0$ and $(i,r)$ sits in column $j$,}
\quad
F(y_{i,r}) = \frac{z_{i,r-d_i}}{z_{i,r+d_i}} \ \mbox{otherwise}.
\]
We introduce a $\Z^k$-grading on $\Z[f_j^{\pm1},z_{i,s}^{\pm1} \mid  1\le j\le k,\ (i,s)\in V^-]$ by declaring that
\[
 \deg(f_j) = e_j, \quad \deg(z_{i,s}) = e_j \ \mbox{if $(i,s)$ sits in column $j$,} 
\]
where $(e_j, 1\le j\le k)$ denotes the canonical basis of $\Z^k$.

Let $x$ be the cluster variable of $\A(G^-)$ obtained from the initial seed $(\{u_{i,s}\}, G^-)$ via a sequence of mutations 
$\sigma$, and let $y$ be the cluster variable of $\A(H^-)$ obtained from the initial seed $(\{z_{i,s}, f_j\}, H^-)$ via 
the \emph{same} sequence of mutations $\sigma$. We want to compare the Laurent polynomials $y$ and $F(x)$.
Since $\deg(F(u_{i,s}))=(0,\ldots,0)$ for every $(i,s)$, we see that $F(x)$ is multi-homogeneous of degree $(0,\ldots,0)$ for the above grading. 
On the other hand, it is easy to check that for every non-frozen vertex $(i,s)$ of the ice quiver $H^-$ the sum of the multi-degrees
of the initial cluster variables and frozen variables sitting at the targets of the arrows going out of $(i,s)$ is equal to
the sum of the multi-degrees
of the initial cluster variables and frozen variables sitting at the sources of the arrows going into $(i,s)$.
Therefore, $\A(H^-)$ is a multi-graded cluster algebra, in the sense of \cite{GL}.
It follows that $y$ is also multi-homogeneous, of degree $(a_1,\ldots,a_k)$.
Now, by construction, we have
\[
 F(x)|_{f_1=1,\ldots, f_k=1} = y|_{f_1=1,\ldots, f_k=1}.
\]
Therefore, 
\[
 y = F(x)\prod_{j=1}^k f_j^{a_j}.
\]
Taking a cluster expansion with respect to the initial cluster of $\A(H^-)$, we write
$y = N/D$ where $D$ is a monomial in the non-frozen cluster variables and $N$ is a polynomial
in the non-frozen and frozen variables. Moreover $N$ is not divisible by any of the $f_j$'s.
It follows that $\prod_{j=1}^k f_j^{a_j}$ is the smallest monomial such that 
$F(x)\prod_{j=1}^k f_j^{a_j}$ contains only nonnegative powers of the variables $f_j$'s.

Now we can conclude using \cite[Theorem 3.1]{HL}, which implies that for all $(i,r)\in W^-$
with $r \ll 0$, the $q$-character of $V_{i,q^r}$ (expressed in terms of the variables
$y_{i,s}\equiv Y_{i,q^s}$) is a cluster variable $x$ of $\A(G^-)$. 
By \cite[Corollary 6.14]{Fre2}, for $r\ll 0$
this cluster variable does not contain any variable $y_{i,s}$ with $s+2d_i>0$,
hence $F(x)$ does not contain any frozen variable $f_j$. Therefore $y = F(x)$, and
the $q$-character of $V_{i,q^r}$ (expressed in terms of the variables $z_{i,s}$) is a cluster
variable of $\A(H^-)$, that is, a cluster variable of $\A(\G)$. This proves the claim for every
fundamental module $V_{i,q^r}$ with $r \ll 0$. But by definition of the cluster algebra $\A(\G)$, the set 
of cluster variables is invariant under the change of variables $z_{i,s} \mapsto z_{i,s+2d_i}$.
Thus we are done.
\end{proof}

\begin{prop}\label{reciproque}
We have:  $\mathcal{A}(G)\hat{\otimes}_\ZZ \mathcal{E} \subseteq K_0(\mathcal{O}^+_{2\ZZ})$.
\end{prop}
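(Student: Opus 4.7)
The plan is to show that every cluster variable of $\mathcal{A}(\Gamma)$, after multiplication by a suitable element of $\mathcal{E}$, lies in $K_0(\mathcal{O}^+_{2\ZZ})$ under the identification (\ref{identi}). Because $K_0(\mathcal{O}^+_{2\ZZ})$ is stable under countable sums (it is, by construction, a completed Grothendieck ring compatible with countable sums of characters), and since every element of $\mathcal{A}(\Gamma)\hat{\otimes}_\ZZ \mathcal{E}$ is a countable sum of $\mathcal{E}$-multiples of monomials in cluster variables, this reduction is enough.

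The key input is the shift invariance of $\Gamma$, together with the argument already used for Proposition \ref{unsens}. Given a cluster variable $x$ produced from the initial seed by a finite mutation sequence $\sigma$ at vertices in some finite set $F \subset V$, I would choose an integer $N$ large enough that the shifted set $\{(i, r - 2Nd_i) : (i, r)\in F\}$ lies entirely in $V^-$. Applying the same sequence $\sigma$ to the corresponding finite sub-seed sitting inside the ice quiver $H^-$ of the proof of Proposition \ref{unsens} yields a cluster variable $x_N$ of $\mathcal{A}(H^-) \subset \mathcal{A}(\Gamma)$. The combination of \cite[Theorem 3.1]{HL} with the multi-grading argument of Proposition \ref{unsens} then identifies $x_N$, up to a monomial in the frozen variables (which via (\ref{identi}) translates to an $\mathcal{E}$-factor), with the $q$-character, expressed in the variables $z_{i,s}$, of a simple finite-dimensional $U_q(\Glie)$-module $V_N$ in $\mathcal{C}_\ZZ^-$. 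Thus $x_N$ lies in $K_0(\mathcal{C}_\ZZ^-) \subset K_0(\mathcal{O}^+_{2\ZZ})$ up to an $\mathcal{E}$-factor.

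To recover $x$ from $x_N$, I would invoke the asymptotic construction of Theorem \ref{normlimit}: passing to the limit $N \to \infty$ with an appropriate normalization of the $V_N$ produces a simple module $L \in \mathcal{O}^+_{2\ZZ}$ whose class, up to an $\mathcal{E}$-factor, equals $x$. This generalizes the well-known fact that the prefundamental representations $L^+_{i,q^r}$ themselves arise as asymptotic limits of Kirillov--Reshetikhin modules. The explicit first-mutation formula (\ref{firstmutation}) and Examples \ref{example-sl2}--\ref{example-sl3} serve as instructive models of the pattern. The main obstacle is therefore this asymptotic lifting step in full generality: one must show that the normalized $q$-characters $[\Psib_N^{-1}]\,\chi_q(V_N)$ stabilize as $N$ grows, that the resulting formal $q$-character describes a genuine simple object of $\mathcal{O}^+_{2\ZZ}$, and finally that the Fomin--Zelevinsky exchange relations pass coherently through the limit, matching the cluster structure of $\mathcal{A}(\Gamma)$ with the tensor structure of $\mathcal{O}^+_{2\ZZ}$.
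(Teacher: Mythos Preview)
Your approach has a genuine gap at the step where you claim that the shifted cluster variable $x_N$ equals the class of a \emph{simple} finite-dimensional module $V_N$. Neither \cite[Theorem 3.1]{HL} nor \cite[Theorem 5.1]{HL} says this: those results only identify $\mathcal{A}(G^-)$ with $K_0(\mathcal{C}_\ZZ^-)$ as a ring, so that $x_N$ is some virtual module, an integer combination of simple classes. The assertion that every cluster variable is a simple class is exactly \cite[Conjecture 5.2]{HL}, equivalently Conjecture~\ref{conjgene} of this paper, and it is open in general. Without simplicity you cannot invoke Theorem~\ref{normlimit}, which is a statement about limits of simple modules $L(M_R)$ for the very specific monomials $M_R$ of (\ref{formmr}); and even granting simplicity, you would still need to match the highest monomials of your $V_N$ to that pattern, which you do not address. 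Your last paragraph essentially acknowledges that this lifting step is the crux, but offers no mechanism to carry it out.

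The paper proceeds quite differently and avoids ever asserting that a cluster variable is simple. It passes through the duality $D$ to $\mathcal{O}^-$, where normalized $q$-characters lie in the cone $\ZZ[[A_{i,a}^{-1}]]$ (Proposition~\ref{cone}), and writes $\chi_q(D(\chi))$ as a finite sum of the form (\ref{laforme}). The finite approximation by Kirillov--Reshetikhin modules is used only at the level of \emph{Grothendieck ring elements} $\phi_N(\chi)\in K_0(\mathcal{C})$ (no simplicity needed): since $\phi_N(\chi)\in K_0(\mathcal{C})$, its maximal monomials are dominant, and taking the limit forces the $\preceq$-maximal $\ell$-weight of $\chi_q(D(\chi))$ to be negative (Lemma~\ref{prneg}). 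A unitriangular change of basis (the $F(\Psib)$ lemma) then expresses $\chi_q(D(\chi))$ as a sum of images of classes already known to lie in $K_0(\mathcal{O}^+_{2\ZZ})$. The point is that proving membership in the Grothendieck ring is strictly weaker than proving simplicity, and the paper's triangularity argument exploits exactly this gap.
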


Consider an element $\chi$ in $\mathcal{A}(G)$. 
By the Laurent phenomenon \cite{FZ1}, $\chi$ is a Laurent polynomial in the initial cluster variables :
$$\chi = P(\{z_{i,r}\}_{(i,r)\in V}).$$
Hence $\mathcal{A}(G)$ is a subalgebra of the fraction field of $K_0(\mathcal{O}_{2\ZZ}^+)$ and the duality $D$ of Proposition \ref{duality} can be algebraically extended to $\mathcal{A}(G)$.
In particular we have
$$D(\chi) = P(\{D(z_{i,r})\}_{(i,r)\in V})\in \text{Frac}(K_0(\mathcal{O}_{2\ZZ}^-))$$
in the fraction field of $K_0(\mathcal{O}_{2\ZZ}^-)$. The $q$-character morphism can also 
be algebraically extended to $\text{Frac}(K_0(\mathcal{O}_{2\ZZ}^-))$. Then 
$\chi_q(D(\chi))$ is obtained by replacing each $z_{i,r}$ by the corresponding $q$-character
\begin{equation}\chi_q(D(z_{i,r})) = \left[\left(\frac{-r}{2d_i}\right)\omega_i\right] \chi_q(L_{i,q^{-r}}^-) = \left[\left(\frac{-r}{2d_i}\right)\omega_i\right]\Psib_{i,q^{-r}}^{-1} (1 + \mathcal{A}_{i,r}),\end{equation}
where by Theorem \ref{normlimit}, $\mathcal{A}_{i,r}$ is a formal power series in the $A_{j,a}^{-1}$ without constant term.
 In particular, we have an analog
formula for the inverse which is 
$$(\chi_q(D(z_{i,r})))^{-1} = \left[\left(\frac{r}{2d_i}\right)\omega_i\right]\Psib_{i,q^{-r}} (1 + \mathcal{B}_{i,r})$$
where 
$$\mathcal{B}_{i,r} = \sum_{k\geq 1} (-\mathcal{A}_{i,r})^k$$
 is a formal power series in the $A_{j,a}^{-1}$ without constant term. In particular
$\chi_q(D(\chi))$ is in $\mathcal{E}_\ell$ and we get a sum of the form
\begin{equation}\label{laforme}
\chi_q(D(\chi)) = \sum_{1\leq \alpha\leq R} \lambda_\alpha [\omega_\alpha] m_\alpha (1 + \mathcal{A}_\alpha)\in\mathcal{E}_\ell
\end{equation}
where $\omega_\alpha$ is a weight, $m_\alpha$ a Laurent monomial in the $\Psib_{i,q^{-r}}$, $\mathcal{A}_\alpha$ is a formal power series in the $A_{j,a}^{-1}$ without constant 
term and $\lambda_\alpha\in \ZZ$. 

We recall the notion of negative $\ell$-weight is introduced in Definition \ref{defineg}. 

\begin{rem} {\rm
We say that a sequence $(\Psib^{(m)})_{m\geq 0}$ of $\ell$-weights congerges pointwise
as a rational fraction to an $\ell$-weight $\Psib$ if for every $i\in I$ and $z\in\CC$,
the ratio $\Psib^{(m)}_i(z)/\Psib_i(z)$ converges to $1$ when $N\rightarrow +\infty$ and $|q| > 1$.
For example, defining the monomials $M_{i,r,N}$ as in Equation (\ref{monkr}) below,
the sequence $(\widetilde{M_{i,r,N}})_{N\geq 0}$ converges pointwise as a rational fraction to $\Psib_{i,q^{-r}}^{-1}$.
}
\end{rem}

\begin{lem}\label{prneg} Let $\chi\in\mathcal{A}(G)$ non zero. Then at least one negative $\ell$-weight occurs in
$\chi_q(D(\chi))$.\end{lem}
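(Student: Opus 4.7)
The strategy is to locate a negative $\ell$-weight in the top-weight layer of $\chi_q(D(\chi))$. Write $\chi = \sum_{\mathbf{n}} c_{\mathbf{n}} \mathbf{z}^{\mathbf{n}}$ as a finite Laurent polynomial in the initial cluster variables. Applying $\chi_q \circ D$ with the formulas from the text, each Laurent monomial contributes
\[
c_{\mathbf{n}}\,[\omega_{\mathbf{n}}]\, m_{\mathbf{n}}\,(1+\text{series in }A_{j,a}^{-1}),\qquad \omega_{\mathbf{n}}=-\sum_{i,r}\tfrac{n_{i,r}r}{2d_i}\omega_i,\qquad m_{\mathbf{n}}=\prod_{i,r}\Psib_{i,q^{-r}}^{-n_{i,r}}.
\]
Since each $A_{j,a}^{-1}$ has strictly negative weight $-\alpha_j$ and the series factor has no constant term, every $\ell$-weight contributed by the series lies strictly below $\omega_{\mathbf{n}}$. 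Setting $\omega_{\max}=\max\{\omega_{\mathbf{n}}: c_{\mathbf{n}}\neq 0\}$, the weight-$\omega_{\max}$ part of $\chi_q(D(\chi))$ is the sum $\sum_{\mathbf{n}:\omega_{\mathbf{n}}=\omega_{\max}}c_{\mathbf{n}}[\omega_{\max}]m_{\mathbf{n}}$, which is non-zero by the injectivity of $\mathbf{n}\mapsto m_{\mathbf{n}}$.

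Next, I would show that among these top-weight $\mathbf{n}$'s, at least one produces a negative $\ell$-weight $[\omega_{\max}]m_{\mathbf{n}}$. Positive-exponent $\Psib$-factors in $m_{\mathbf{n}}$ (arising from coordinates with $n_{i,r}<0$) are to be absorbed into $Y$-factors via the identity $Y_{i,a}=[\omega_i]\Psib_{i,aq_i}^{-1}\Psib_{i,aq_i^{-1}}$, consuming negative $\Psib$-factors at shifted indices $r-2d_i$ and $[\omega_i]$-content from $[\omega_{\max}]$. This absorption is feasible precisely when the combinatorial condition $n_{i,r-2d_i}\geq -n_{i,r}$ holds at every $(i,r)$ with $n_{i,r}<0$, and when $[\omega_{\max}]$ supplies enough $[\omega_i]$-content. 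I would verify this condition on the top-weight monomials of $\chi\in\mathcal{A}(\G)$ by induction on the mutation sequence producing them. The base case of initial cluster variables is immediate, since $\mathbf{n}$ has only non-negative entries. For a first-mutation variable $z_{i,r}^*=z_{i,r}^{-1}(\prod_{j,C_{ji}\neq 0}z_{j,r-d_jC_{ji}}+\prod_{j,C_{ji}\neq 0}z_{j,r+d_jC_{ji}})$, the ``incoming-side'' monomial achieves the top weight and contains the $j=i$ factor $z_{i,r-2d_i}$, so the absorption condition holds and yields a product of $Y$'s and $\Psib^{-1}$'s.

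The principal obstacle will be carrying this inductive verification through arbitrary compositions of mutations, where Laurent monomials in the initial seed become more intricate. A complete proof likely requires the $g$-vector and $F$-polynomial machinery adapted to the quiver $\G$, or an alternative argument using the Laurent positivity of cluster algebras, to ensure that the top-weight monomials of any element of $\mathcal{A}(\G)$ continue to satisfy the absorption condition.
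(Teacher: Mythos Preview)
Your proposal has a genuine gap that you yourself identify: the inductive verification of the ``absorption condition'' through arbitrary mutation sequences is not carried out, and this is precisely the heart of the matter. Checking it for initial variables and first mutations is easy; the difficulty is that Laurent expansions after many mutations can be arbitrarily complicated, and there is no obvious reason why your top-weight monomials should continue to satisfy the inequality $n_{i,r-2d_i}\ge -n_{i,r}$ at every negative coordinate. Appealing vaguely to $g$-vectors, $F$-polynomials, or Laurent positivity does not close this gap without substantial further work, and it is not clear that positivity alone gives what you need.

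The paper's proof takes a completely different route that sidesteps this combinatorial induction. Rather than analysing Laurent monomials directly, it works with a partial order $\preceq$ on normalized $\ell$-weights (finer than your ordinary weight order) and picks a term $m_{\alpha_0}$ maximal for $\preceq$. The key idea is then a \emph{finite-dimensional approximation}: one truncates to a finite piece of the quiver and replaces each initial cluster variable $z_{i,r}$ by the class of a Kirillov--Reshetikhin module $W_{i,r,N}$. By the earlier result \cite[Theorem~5.1]{HL} (applied via the duality $D$), the resulting element $\phi_N(\chi)$ lies in the Grothendieck ring $K_0(\mathcal{C})$ of genuine finite-dimensional representations. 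Now the standard theory of $q$-characters forces the $\preceq$-maximal monomial $m_{\alpha_0}^{(N)}$ to be dominant. Letting $N\to\infty$, the limit of these dominant monomials is automatically a negative $\ell$-weight, and maximality for $\preceq$ guarantees it survives in $\chi_q(D(\chi))$ with nonzero coefficient.

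In short, the paper imports the required ``dominance'' from the already-established finite-dimensional theory via a limiting procedure, rather than proving it from scratch by induction on mutations. This is the essential idea your approach is missing.
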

 
\begin{proof}\label{maxdom} We will use the following partial ordering $\preceq$ on the set of $\ell$-weights 
$\Psib$ satisfying $\varpi(\Psib) = 1$ : for such $\ell$-weights $\Psib$, $\Psib'$, we set $\Psib \succeq \Psib'$ if 
$$\Psib' (\Psib)^{-1} = \prod_{i,r\geq -M}\widetilde{A_{i,q^r}}^{-v_{i,r}}$$
is a possibly infinite product (that is pointwise the limit of the partial products) with the $v_{i,r}\geq 0$. If $\Psib = \widetilde{m}$ and $\Psib' = \widetilde{m'}$ with $m, m'$ monomials in $\mathcal{M}$, $\Psib\preceq \Psib'$ is equivalent to $m\preceq m'$ for the partial ordering considered in \cite{Nab}.

As the sum (\ref{laforme}) is finite, there is $\alpha_0$ such that $m_{\alpha_0}$ is maximal for $\preceq$. We prove that $m_{\alpha_0}$ is a negative $\ell$-weight.

Let $N$ be such that all the cluster variables $z_{i,r}$ of the initial seed occurring in the
Laurent monomials of Equation (\ref{laforme}) satisfy $r > - 2d(N+2)$ where $d = \text{Max}_{i\in I}(d_i)$ is the lacing number of $\gb$. 
We consider as above the semi-infinite cluster 
algebra $\mathcal{A}(H^+_N)$ obtained from $\mathcal{A}(G)$ where the cluster variables sitting at $(i,r)\in V$, $r \leq - 2d(N + 2)$ 
have been removed. As explained in the proof of Proposition \ref{unsens}, $\mathcal{A}(H^+_N)$ can be regarded as a subalgebra
of $\mathcal{A}(G)$. We replace every cluster variable $z_{i,r}$ of the initial seed by the class of 
the Kirillov-Reshetikhin module $W_{i,r,N}$ of highest monomial
\begin{equation}\label{monkr}M_{i,r,N} = \prod_{k\geq 0 \text{ , }r + 2k d_i \leq 2dN} Y_{i,q^{-r - d_i - 2k d_i}}.\end{equation} 
(Here $M_{i,r,N}$ is set to be $1$ if $r > 2dN$).
We obtain 
$$\phi_N(\chi)\in \text{Frac}(K_0(\mathcal{C}))$$ 
the image of $\chi$ in the fraction field of $K_0(\mathcal{C})$.
By using the duality $D$, we reverse all spectral parameters (by (ii) in Example \ref{exdual} illustrating Proposition \ref{duality}, or by \cite[Section 3.4]{HL0}). 
We obtain the same\footnote{Note that we have a term $-d_i$ which does not occur in
\cite[Section 2.2.2]{HL} as we use the labeling by $V$ and not by $W$.} as in \cite[Section 2.2.2]{HL}.
Then by \cite[Theorem 5.1]{HL}, $D(\phi_N(\chi))$ belongs to the Grothendieck ring of $\mathcal{C}$. 
So by applying $D$ again, $\phi_N(\chi)$ is in $K_0(\mathcal{C})$.

Now we get as for Equation \ref{laforme} 
\begin{equation}\label{newlaforme}\chi_q(\phi_N(\chi)) = \sum_{1\leq \alpha\leq R}\lambda_\alpha m_\alpha^{(N)}(1 + \mathcal{A}_\alpha^{(N)})\end{equation}
where $m_\alpha^{(N)}$ is a monomial and $\mathcal{A}_\alpha^{(N)}$ is a formal power series in the $A_{i,a}^{-1}$.
Note that $\mathcal{A}_\alpha^{(N)}$ converges when $N\rightarrow +\infty$ to  $\mathcal{A}_\alpha$ as a formal power series in the $A_{i,a}^{-1}$.
Let $\omega_{i,r,N}$ be the highest weight of $W_{i,r,N}$. Now if the initial cluster variable $z_{i,r}$ is replaced by 
\begin{equation}\label{newident}\left[\left(\frac{r}{2d_i}\right)\omega_i -\omega_{i,r,N}\right][W_{i,r,N}]\end{equation} 
instead of $[W_{i,r,N}]$, we just have to replace in (\ref{newlaforme}) each $m_\alpha^{(N)}$ by $[\omega_\alpha]\widetilde{m_\alpha^{(N)}}$ (the $\mathcal{A}_\alpha^{(N)}$ are unchanged).
Then $\widetilde{m_\alpha^{(N)}}$ converges to $m_\alpha$ when
$N\rightarrow +\infty$ pointwise as a rational fraction. 

Let us show that there are infinitely many $N$ so that $\widetilde{m_{\alpha_0}^{(N)}}$ is maximal among the 
$\widetilde{m_\alpha^{(N)}}$ for $\preceq$.
Otherwise, since Equation \ref{laforme} has finitely many summands,
there is $\alpha$ such that $\widetilde{m_{\alpha_0}^{(N)}}\prec\widetilde{m_\alpha^{(N)}}$ for infinitely many $N$.  
In the limit, we get that $m_{\alpha_0} \prec m_\alpha$, contradiction. 

For $N$ such that $\widetilde{m_{\alpha_0}^{(N)}}$ is maximal for $\preceq$, $m_{\alpha_0}^{(N)}$ is necessarily
dominant as $\phi_N(\chi)\in K_0(\mathcal{C})$ (see \cite[Section 5.4]{Fre2}). 
Then the limit $m_{\alpha_0}$ of the $\widetilde{m_{\alpha_0}^{(N)}}$ is negative as it is easy to check  
that a limit of dominant monomials is a negative $\ell$-weight.
Finally, since $m_{\alpha_0}$ is maximal for $\preceq$, it  
necessarily occurs with a nonzero coefficient in the expansion of $\chi_q(D(\chi))$.
\end{proof}

\begin{lem} Let $\Psib$ be a negative $\ell$-weight such that the roots and the poles of $\Psib_i(z)$ are of the form $q^r$ with $(i,r)\in V$. Then there is a unique $F(\Psib)\in \chi_q(D(K_0(\mathcal{O}_{2\ZZ}^+)))$ 
such that $\Psib$ is the unique negative $\ell$-weight occurring in $F(\Psib)$ and its coefficient is $1$. 

Moreover $F(\Psib)$ is of the form
\begin{equation}\label{formo}F(\Psib) = [\Psib] + \sum_{\Psib',\varpi(\Psib')\prec \varpi(\Psib)} \lambda_{\Psib'} [\Psib'],\end{equation}
for the usual partial ordering on weights and with the $\lambda_{\Psib'}\in\ZZ$.
\end{lem}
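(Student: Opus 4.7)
The plan is to construct $F(\Psib)$ by a Gaussian-elimination procedure applied to the simple $q$-character $\chi_q(L(\Psib))$, and to read uniqueness off the fact that simple $q$-characters form a basis of the target ring. Since $\Psib$ is a negative $\ell$-weight, $L(\Psib)$ is a simple object of $\mathcal{O}^-$ (Definition~\ref{newcato}), and Proposition~\ref{cone} gives
\[
\chi_q(L(\Psib))\;=\;[\Psib]\,\bigl(1+\mathcal{S}_{\Psibs}\bigr),\qquad \mathcal{S}_{\Psibs}\in A_{i,b}^{-1}\,\ZZ[[A_{i,b}^{-1}]]_{i\in I,\,b\in\CC^*}.
\]
Every $\ell$-weight $\Psib'\neq\Psib$ in this expansion therefore satisfies $\varpi(\Psib')\prec\varpi(\Psib)$, which already yields the shape~\eqref{formo}. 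Using the symmetry $V=-V$ (immediate from the defining relation $s=r+d_iC_{i,j}$ of the arrows of $\tG$ in \S\ref{defclus}) together with the parameter-inverting behaviour of $D$ on prefundamental and fundamental classes recorded in Example~\ref{exdual}, I would show that $[L(\Psib)]\in D(K_0(\mathcal{O}^+_{2\ZZ}))$ (so that $L(\Psib)$ belongs to the subcategory $\mathcal{O}^-_{2\ZZ}$ of Remark~\ref{dualmain}), and that every intermediate $\Psib'$ above still has its roots and poles at $q^r$ with $(i,r)\in V$.

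I then construct $F(\Psib)$ by downward induction on the partial ordering $\preceq$ on weights. Listing the negative $\ell$-weights of $\chi_q(L(\Psib))$ distinct from $\Psib$ as $\{\Psib^{(k)}\}_k$ with coefficients $c_k$, each $\Psib^{(k)}$ satisfies $\varpi(\Psib^{(k)})\prec\varpi(\Psib)$ and has parameters in $V$, so $F(\Psib^{(k)})$ is already defined, and I set
\[
F(\Psib)\;:=\;\chi_q(L(\Psib))-\sum_{k} c_k\,F(\Psib^{(k)}).
\]
The countable sum is well-defined in $\mathcal{E}_\ell$ because only finitely many summands contribute at each fixed weight $\omega$, by the defining finiteness property of $\mathcal{E}_\ell$ from \S\ref{precato}. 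The inductive hypothesis that $\Psib^{(k)}$ is the unique negative $\ell$-weight of $F(\Psib^{(k)})$ (with coefficient $1$) ensures that the subtraction cancels exactly the unwanted negative contributions while introducing only non-negative $\ell$-weights; thus $\Psib$ remains the sole negative $\ell$-weight of $F(\Psib)$, with coefficient $1$, and the expansion retains the form~\eqref{formo}. Closure of $\chi_q(D(K_0(\mathcal{O}^+_{2\ZZ})))$ under countable sums (Theorem~\ref{stable} with the conventions of \S\ref{precato}) then places $F(\Psib)$ in the target ring.

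For uniqueness, the characters $\{\chi_q(L(\Psib'))\}$, as $\Psib'$ runs through the negative $\ell$-weights with parameters in $V$, form a $\ZZ$-basis of $\chi_q(D(K_0(\mathcal{O}^+_{2\ZZ})))$, and the triangular change of basis performed above makes $\{F(\Psib')\}$ into another basis. Since every $F(\Psib')$ contributes $[\Psib']$ with coefficient $1$ and no other negative $\ell$-weight, any element of the target ring with empty negative support has vanishing $F$-coefficients, hence is zero; this forces $F(\Psib)$ to be uniquely determined by its stated property. The main obstacle lies in the parameter-tracking of the first paragraph: placing $[L(\Psib)]$ and each intermediate $\Psib^{(k)}=\Psib\prod A_{j,q^s}^{-v_{j,s}}$ into $D(K_0(\mathcal{O}^+_{2\ZZ}))$ relies on the symmetry $V=-V$ and on a careful check that the shifted spectral parameters produced by the $A_{j,q^s}^{-1}$ remain in $V$. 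Once this is secured, the Gaussian-elimination construction and the uniqueness argument proceed formally.
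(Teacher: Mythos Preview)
Your triangular-elimination strategy is the same as the paper's, but you have chosen the wrong family of ``starting vectors'', and this creates a genuine gap. The paper does not use the simple $q$-characters $\chi_q(L(\Psib))$; it picks, for each negative $\Psib$ with parameters in $V$, an explicit tensor product $M(\Psib)$ of fundamental and positive prefundamental representations \emph{inside} $\mathcal{O}^+_{2\ZZ}$, and works with $\chi_q(D([M(\Psib)]))$. Since $M(\Psib)\in\mathcal{O}^+_{2\ZZ}$ by construction, $\chi_q(D([M(\Psib)]))$ lies tautologically in $\chi_q(D(K_0(\mathcal{O}^+_{2\ZZ})))$, and one then inverts the (weight-)triangular transition matrix to produce $F(\Psib)$. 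Your version replaces $M(\Psib)$ by the simple module $L(\Psib)$ in $\mathcal{O}^-$, so you must first prove $[L(\Psib)]\in D(K_0(\mathcal{O}^+_{2\ZZ}))$. Example~\ref{exdual} only computes $D$ on prefundamental and finite-dimensional classes; it does not tell you that $D^{-1}$ of an arbitrary simple class in $\mathcal{O}^-$ stays in $K_0(\mathcal{O}^+_{2\ZZ})$. What you need here is precisely Theorem~\ref{simpledual} (that $D$ sends the simple class $[L(\Psib)]$ to $[L(\overline{\Psib}^{-1})]$), whose proof uses the limiting constructions of \S\ref{limfin} and lies logically after this lemma. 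The ``parameter-tracking'' you flag as the main obstacle is therefore not a routine check: without Theorem~\ref{simpledual} there is no description of $\mathcal{O}^-_{2\ZZ}$ in terms of spectral parameters, so neither the inclusion $[L(\Psib)]\in D(K_0(\mathcal{O}^+_{2\ZZ}))$ nor your basis claim for $\{\chi_q(L(\Psib'))\}$ is available.

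The same issue undermines your uniqueness argument. You deduce uniqueness from the assertion that the $\chi_q(L(\Psib'))$, for negative $\Psib'$ with parameters in $V$, span $\chi_q(D(K_0(\mathcal{O}^+_{2\ZZ})))$; but that spanning statement is again equivalent to knowing which simples lie in $\mathcal{O}^-_{2\ZZ}$. The paper's uniqueness proof avoids this entirely: it invokes Proposition~\ref{unsens} (so any element of the target comes from some $\chi\in\mathcal{A}(G)\hat{\otimes}_\ZZ\mathcal{E}$) and then Lemma~\ref{prneg} (a nonzero such $\chi$ forces a negative $\ell$-weight in $\chi_q(D(\chi))$), which immediately kills any putative difference $F_1-F_2$. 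To repair your argument without forward references, replace $L(\Psib)$ by a tensor product $M(\Psib)\in\mathcal{O}^+_{2\ZZ}$ chosen via Example~\ref{exdual} and the symmetry $V=-V$, and use Lemma~\ref{prneg} for uniqueness; you then recover exactly the paper's proof.
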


\begin{proof} The uniqueness follows from Proposition \ref{unsens} and Lemma \ref{prneg}. For each negative $\ell$-weight
$\Psib$ as in the Lemma, there is a representation $M(\Psib)$ in $\mathcal{O}_{2\ZZ}^+$ such that $\chi_q(D([M(\Psib)]))$
is of the form
$$\chi_q(D([M(\Psib)])) = [\Psib] + \sum_{\Psib',\varpi(\Psib')\prec \varpi(\Psib)} \mu_{\Psib',\Psib} [\Psib'].$$
Indeed it suffices to consider a tensor product of fundamental and positive prefundamental representations. Now if the $F(\Psib)$ do exist, we have an infinite triangular transition matrix from the $(F(\Psib))$ to the $(\chi_q(D([M(\Psib)]))$ with $1$ on the diagonal and whose off-diagonal coefficients are the $\mu_{\Psib', \Psib}$ for $\Psib, \Psib'$ negative. So to prove the existence, it suffices to consider the inverse of this matrix (which is well-defined as for given $\Psib, \Psib'$, there is a finite number of $\Psib''$ satisfying $\varpi(\Psib')\preceq \varpi(\Psib'')$ and $\mu_{\Psib'',\Psib}\neq 0$).
\end{proof}

We can now finish the proof of Proposition \ref{reciproque} : 

\begin{proof} Let $\chi$ be in $\mathcal{A}(G)$. For $\Psib$ a negative $\ell$-weight, we denote by $\lambda_{\Psib}$
the coefficient of $\Psib$ in $\chi_q(D(\chi))$. Then by lemma \ref{prneg} we have
$$\chi_q(D(\chi)) = \sum_{\Psib\text{ negative}}\lambda_{\Psib} F(\Psib).$$
As the $F(\Psib)$ are of the form (\ref{formo}), this sum is well-defined in $\chi_q(D(K_0(\mathcal{O}_{2\ZZ}^+)))$
and we get $\chi\in K_0(\mathcal{O}_{2\ZZ}^+)$.
\end{proof}

\begin{example}{\rm Let us illustrate the proof of Lemma \ref{prneg} which is the crucial technical point for the proof of Proposition \ref{reciproque}.
Consider the sequence of mutations
of  \ref{example-sl2}. Let us write the cluster variables $\phi_N(\chi)$ :
$$\xymatrix{ 
[W_{N-2,q^{1-2N}}]& [W_{N-2,q^{1-2N}}]& [W_{N-2,q^{1-2N}}]&\ar[d] [W_{N-2,q^{1-2N}}]
\\\ar[u] [W_{N-1,q^{1-2N}}]&\ar[u] [W_{N-1,q^{1-2N}}]\ar[d]&\ar[u] \fbox{$[W_{2,q^{-5}}]$}\ar@/^{2pc}/[dd]&[L(Y_{q^3}Y_{q}Y_{q^{-1}})]\ar@/^{4pc}/[ddd]
\\\ar[u]\fbox{$[W_{N,q^{1-2N}}]$}&\ar[d][L(Y_q)]&[L(Y_q)]&[L(Y_q)]
\\\ar[u][W_{1+N,q^{1-2N}}]&\ar@/^{2pc}/[uu]\fbox{$[W_{1+N,q^{1-2N}}]$}&[L(Y_qY_{q^3})]\ar[u]\ar[d]&[L(Y_qY_{q^3})]\ar[u]\ar@/^{2pc}/[uu]
\\\ar[u][W_{2+N,q^{1-2N}}]&\ar[u][W_{2+N,q^{1-2N}}]&[W_{2+N,q^{1-2N}}]\ar@/^{3pc}/[uuu]&[W_{2+N,q^{1-2N}}]\ar@/^{4pc}/[uuuu]
\\{}\save[]+<0cm,4ex>*{\vdots}\restore&{}\save[]+<0cm,4ex>*{\vdots}\restore&{}\save[]+<0cm,4ex>*{\vdots}\restore&{}\save[]+<0cm,4ex>*{\vdots}\restore}$$
The cluster variable corresponding to $[L(Y_q)]$ has its $q$-character which can be written in the form of Equation (\ref{laforme}) : 
\begin{eqnarray*}
\chi_q(L(Y_q)) &=& Y_q + Y_{q^3}^{-1} = \frac{\chi_q(W_{N-1,q^{1-2N}}) + \chi_q(W_{N+1,q^{1 - 2N}})}{\chi_q(W_{N,q^{1-2N}})}
\\[2mm]
&=&Y_{q^{-1}}^{-1}  \frac{1 + A_{q^{-2}}^{-1}(1 + A_{q^{-4}}^{-1}(1  
+ \cdots (1 + A_{q^{2(1-N)}}^{-1}))\cdots )}{1 + A_{1}^{-1}(1 + A_{q^{-2}}^{-1}(1  + \cdots (1 + A_{q^{2(1-N)}}^{-1}))\cdots )} 
\\[2mm]
&&+\ Y_q\frac{1 + A_{q^{2}}^{-1}(1 + A_{1}^{-1}(1  + \cdots (1 + A_{q^{2(1-N)}}^{-1}))\cdots )}{1 + A_{1}^{-1}(1 + A_{q^{-2}}^{-1}(1  + \cdots (1 + A_{q^{2(1-N)}}^{-1}))\cdots )}.
\end{eqnarray*}
We see as in the statement of Lemma \ref{prneg} that the monomial $Y_q$ maximal for $\preceq$ is dominant 
and so negative in the sense of Definition \ref{defineg}. 
In the limit $N\rightarrow +\infty$ (with the renormalized weights) we get the representations in $K_0(\mathcal{O}^-)$
$$\xymatrix{ 
[-2\omega][L_{q^{-4}}^-]& [-2\omega][L_{q^{-4}}^-]& [-2\omega][L_{q^{-4}}^-]&\ar[d] [-2\omega][L_{q^{-4}}^-]
\\\ar[u] [-\omega][L_{q^{-2}}^-]&\ar[u] [-\omega][L_{q^{-2}}^-]\ar[d]&\ar[u] \fbox{$[-\omega][L_{q^{-2}}^-]$}\ar@/^{2pc}/[dd]&[L(Y_{q^3}Y_{q}Y_{q^{-1}})]\ar@/^{4pc}/[ddd]
\\\ar[u]\fbox{$[L_{1}^-]$}&\ar[d][L(Y_q)]&[L(Y_q)]&[L(Y_q)]
\\\ar[u][\omega][L_{q^{2}}^-]&\ar@/^{2pc}/[uu]\fbox{$[\omega][L_{q^{2}}^-]$}&[L(Y_qY_{q^3})]\ar[u]\ar[d]&[L(Y_qY_{q^3})]\ar[u]\ar@/^{2pc}/[uu]
\\\ar[u][2\omega][L_{q^4}^-]&\ar[u][2\omega][L_{q^4}^-]&[2\omega][L_{q^4}^-]\ar@/^{3pc}/[uuu]&[2\omega][L_{q^4}^-]\ar@/^{4pc}/[uuuu]
\\{}\save[]+<0cm,4ex>*{\vdots}\restore&{}\save[]+<0cm,4ex>*{\vdots}\restore&{}\save[]+<0cm,4ex>*{\vdots}\restore&{}\save[]+<0cm,4ex>*{\vdots}\restore
}$$
The duality again gives the representations in $K_0(\mathcal{O}^+)$ : 
$$\xymatrix{ 
[-2\omega][L_{q^4}^+]& [-2\omega][L_{q^4}^+]& [-2\omega][L_{q^4}^+]&\ar[d] [-2\omega][L_{q^4}^+]
\\\ar[u] [-\omega][L_{q^2}^+]&\ar[u] [-\omega][L_{q^2}^+]\ar[d]&\ar[u] \fbox{$[-\omega][L_{q^2}^+]$}\ar@/^{2pc}/[dd]&[L(Y_{q^{-3}}Y_{q^{-1}}Y_q)]\ar@/^{4pc}/[ddd]
\\\ar[u]\fbox{$[L_{1}^+]$}&\ar[d][L(Y_{q^{-1}})]&[L(Y_{q^{-1}})]&[L(Y_{q^{-1}})]
\\\ar[u][\omega][L_{q^{-2}}^+]&\ar@/^{2pc}/[uu]\fbox{$[\omega][L_{q^{-2}}^+]$}&[L(Y_{q^{-1}}Y_{q^{-3}})]\ar[u]\ar[d]&[L(Y_{q^{-1}}Y_{q^{-3}})]\ar[u]\ar@/^{2pc}/[uu]
\\\ar[u][2\omega][L_{q^{-4}}^+]&\ar[u][2\omega][L_{q^{-4}}^+]&[2\omega][L_{q^{-4}}^+]\ar@/^{3pc}/[uuu]&[2\omega][L_{q^{-4}}^+]\ar@/^{4pc}/[uuuu]
\\{}\save[]+<0cm,4ex>*{\vdots}\restore&{}\save[]+<0cm,4ex>*{\vdots}\restore&{}\save[]+<0cm,4ex>*{\vdots}\restore&{}\save[]+<0cm,4ex>*{\vdots}\restore
}$$}
\end{example}

\begin{example}{\rm We now illustrate  the proof of Lemma \ref{prneg} by means of the mutation
of \ref{example-sl3}. Let us write the cluster
variables $\phi_N(\chi)$ :

$$\xymatrix{ &{}\save[]+<0cm,2ex>*{\vdots}\restore&&{}\save[]+<0cm,2ex>*{\vdots}\restore
\\{}\save[]+<0cm,2ex>*{\vdots}\restore&\ar[u]\ar[dl] [W_{N-1,q^{1-2N}}^{(1)}] & {}\save[]+<0cm,2ex>*{\vdots}\restore  &\ar[u]\ar[dd]  [W_{N-1,q^{1-2N}}^{(1)}]
\\[W_{N-1,q^{2(1-N)}}^{(2)}]\ar[dr]&&[W_{N-1,q^{2(1-N)}}^{(2)}]&
\\&[W_{N,q^{1-2N}}^{(1)}]\ar[uu]\ar[dl]&&[L(m^{(N)})]\ar[ul]\ar[dd]
\\[ W_{N,q^{2(1-N)}}^{(2)}]\ar[uu]\ar[dr]&&[W_{N,q^{2(1-N)}}^{(2)}]\ar[ru]&
\\{}\save[]+<0cm,4ex>*{\vdots}\restore&[W_{N+1,q^{1-2N}}^{(1)}]\ar[uu]&{}\save[]+<0cm,4ex>*{\vdots}\restore&[W_{N+1,q^{1-2N}}^{(1)}]\ar@/_{2.5pc}/[uuuu]
\\&{}\save[]+<0cm,4ex>*{\vdots}\restore&&{}\save[]+<0cm,4ex>*{\vdots}\restore}$$
Here $m^{(N)} = Y_{1,q} (Y_{2,q^{2(1-N)}}Y_{2,q^{4-2N}}\cdots Y_{2,q^{-2}})$. 
The $q$-character corresponding to the cluster variable  $[L(m^{(N)})]$ can be written in the form of Equation (\ref{laforme}) : 
$$\chi_q(L(m^{(N)})) = \frac{\chi_q(W_{N-1,q^{1-2N}}^{(1)})\chi_q(W_{N,q^{2(1-N)}}^{(2)})}{\chi_q(W_{N,q^{1-2N}}^{(1)})} + \frac{\chi_q(W_{N+1,q^{1 - 2N}}^{(1)})\chi_q(W_{N-1,q^{2(1 - N)}}^{(2)})}{\chi_q(W_{N,q^{1-2N}}^{(1)})}$$
$$=Y_{1,q^{-1}}^{-1}(Y_{2,q^{2(1-N)}}Y_{2,q^{4-2N}}\cdots Y_{2,1}) (1 + \mathcal{A}_1^{(N)}) + Y_{1,q} (Y_{2,q^{2(1-N)}}Y_{2,q^{4-2N}}\cdots Y_{2,q^{-2}}) (1 + \mathcal{A}_2^{(N)}),$$
where $\mathcal{A}_1^{(N)}$ and $\mathcal{A}_2^{(N)}$ are formal power series in the 
$A_{1,q^a}^{-1}$, $A_{2,q^a}^{-1}$ ($a\in\CC^*$) without constant term. 
The monomial $\widetilde{m^{(N)}}$ is maximal for $\preceq$. 
Its limit for $N\rightarrow + \infty$ is $[-\omega_1]Y_{1,q}\Psib_{2,q^{-1}}^{-1}$, 
which is negative in the sense of Definition \ref{defineg}.}
\end{example}

\begin{example}{\rm In this example we check that the images of the initial cluster variables considered in the proof 
of Lemma \ref{prneg} do match.
 Let us consider type $B_2$ with the following initial seed and the
initial cluster variables replaced by the $W_{i,r}$ :
\[
\def\objectstyle{\scriptscriptstyle}
\xymatrix@-1.0pc{
&&&&\\
&&(\bdeux,-1)\ar[rd]
& 
\\
&&\ar[ld]\ar[u] (\bdeux,-3)& \ar[ld](\bun,-3)&
\\
&{(\bun,-5)}\ar[rd] &\ar[u] (\bdeux, -5) \ar[rd]&&
\\
&&\ar[u] \ar[ld](\bdeux,-7) &\ar[ld] (\bun,-7) \ar[uu]  &
\\
&(\bun,-9)\ar[uu]\ar[rd]    & \ar[u](\bdeux,-9)\ar[rd] &&
\\
&&(\bdeux, -11)\ar[u]\ar[ld] & (\bun,-11)\ar[uu] &
\\
&(\bun,-13)\ar[uu]& {}\save[]+<0cm,0ex>*{\vdots}\restore& {}\save[]+<0cm,0ex>*{\vdots}\restore&
}
\xymatrix@-1.0pc{
&&&&\\
&& W_{1+2N,-4N}^{(2)}\ar[rd]
& 
\\
&&\ar[ld]\ar[u] W_{2+2N,-4N}^{(2)}& \ar[ld]W_{1+N,1-4N}^{(1)}&
\\
&W_{2+N,-4N-1}^{(1)}\ar[rd] &\ar[u] W_{3+2N,-4N}^{(2)} \ar[rd]&&
\\
&&\ar[u] \ar[ld]W_{4+2N,-4N}^{(2)} &\ar[ld] W_{2+N,1-4N}^{(1)} \ar[uu]  &
\\
&W_{3+N,-4N-1}^{(1)}\ar[uu]\ar[rd]    & \ar[u]W_{5+2N,-4N}^{(2)}\ar[rd] &&
\\
&&W_{6+2N,-4N}^{(2)}\ar[u]\ar[ld] & W_{3+ N,1-4N}^{(1)}\ar[uu] &
\\
&W_{4+N,-4N-1}^{(1)}\ar[uu]& {}\save[]+<0cm,0ex>*{\vdots}\restore& {}\save[]+<0cm,0ex>*{\vdots}\restore&
}
\]
In the limit $N\rightarrow +\infty$ (with the renormalized weights) we get the images of the initial cluster variables :
\[
\def\objectstyle{\scriptscriptstyle}
\xymatrix@-1.0pc{
&&&&\\
&& [\frac{\omega_2}{2}]L_{2,1}^-\ar[rd]
& 
\\
&&\ar[ld]\ar[u] [\frac{3\omega_2}{2}]L_{2,3}^-& \ar[ld][\frac{3\omega_1}{4}]L_{1,3}^-&
\\
&[\frac{5\omega_1}{4}]L_{1,5}^-\ar[rd] &\ar[u] [\frac{5\omega_2}{2}]L_{2,5}^- \ar[rd]&&
\\
&&\ar[u] \ar[ld][\frac{7\omega_2}{2}]L_{2,7}^- &\ar[ld] [\frac{7\omega_1}{4}]L_{1,7}^- \ar[uu]  &
\\
&[\frac{9\omega_1}{4}]L_{1,9}^-\ar[uu]\ar[rd]    & \ar[u][\frac{9\omega_2}{2}]L_{2,9}^-\ar[rd] &&
\\
&&[\frac{11\omega_2}{2}]L_{2,11}^-\ar[u]\ar[ld] & [\frac{11\omega_1}{4}]L_{1,11}^-\ar[uu] &
\\
&[\frac{13\omega_1}{4}]L_{1,13}^-\ar[uu]& {}\save[]+<0cm,0ex>*{\vdots}\restore& {}\save[]+<0cm,0ex>*{\vdots}\restore&
}
\xymatrix@-1.0pc{
&&&&\\
&& [\frac{\omega_2}{2}]L_{2,-1}^+\ar[rd]
& 
\\
&&\ar[ld]\ar[u] [\frac{3\omega_2}{2}]L_{2,-3}^+& \ar[ld][\frac{3\omega_1}{4}]L_{1,-3}^+&
\\
&[\frac{5\omega_1}{4}]L_{1,-5}^+\ar[rd] &\ar[u] [\frac{5\omega_2}{2}]L_{2,-5}^+ \ar[rd]&&
\\
&&\ar[u] \ar[ld][\frac{7\omega_2}{2}]L_{2,-7}^+ &\ar[ld] [\frac{7\omega_1}{4}]L_{1,-7}^+ \ar[uu]  &
\\
&[\frac{9\omega_1}{4}]L_{1,-9}^+\ar[uu]\ar[rd]    & \ar[u][\frac{9\omega_2}{2}]L_{2,-9}^+\ar[rd] &&
\\
&&[\frac{11\omega_2}{2}]L_{2,-11}^+\ar[u]\ar[ld] & [\frac{11\omega_1}{4}]L_{1,-11}^+\ar[uu] &
\\
&[\frac{13\omega_1}{4}]L_{1,-13}^+\ar[uu]& {}\save[]+<0cm,0ex>*{\vdots}\restore& {}\save[]+<0cm,0ex>*{\vdots}\restore&
}
\]}
\end{example}

\section{Conjectures and evidences}\label{secconj}

\subsection{A conjecture}

The concept of a monoidal categorification of a cluster algebra was introduced in 
\cite[Definition 2.1]{HL}. We say that a simple object $S$ of a monoidal category is 
{\em real} if $S\otimes S$ is simple. Let us recall that a cluster monomial
is a monomial in the cluster variables of a single cluster.

\begin{defi}\label{defmoncat}
Let $\mathcal{A}$ be a cluster algebra and let $\mathcal{M}$ be an abelian monoidal category.
We say that $\mathcal{A}$ is a monoidal categorification of $\mathcal{A}$
if there is an isomorphism between $\mathcal{A}$ and the Grothendieck ring of $\mathcal{M}$ 
such that the cluster monomials of $\mathcal{A}$ are the classes of all the 
real simple objects of $\mathcal{M}$  (up to invertibles).
\end{defi}

See \cite[Section 2]{HLpr} for a discussion on applications of monoidal categorifications.
In view of Theorem \ref{isom}, it is natural to formulate the following conjecture.

\begin{conj}\label{conjgene} The isomorphism of Theorem \ref{isom} defines a monoidal categorification, that is, the cluster
monomials in $\mathcal{A}(\Gamma)$ get identified with real simple objects in $\mathcal{O}_{2\ZZ}^+$ up to invertible representations.
\end{conj}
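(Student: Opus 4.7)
The strategy I would follow mirrors the structure already used successfully in \cite{HL} for the monoidal categorification of $\mathcal{C}_\ZZ^-$ by $\mathcal{A}(G^-)$, and exported to our setting through the duality $D$ and the asymptotic limit construction of Section~\ref{limfin}. I would split the conjecture into two assertions: (a) every cluster monomial is (up to an invertible $[\omega]$) the class of a real simple object of $\mathcal{O}^+_{2\ZZ}$; (b) every real prime simple object of $\mathcal{O}^+_{2\ZZ}$ corresponds to a cluster variable. As usual, (a) is the more tractable half, while (b) is the deep part of the statement.

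For (a), I would argue by induction on the length of the mutation sequence leading from the initial seed to the given cluster monomial. The base case is covered by Theorem~\ref{stensor}: the initial cluster variables $\ell_{i,q^r}$ are classes of positive prefundamental representations $L_{i,q^r}^+$, and arbitrary tensor products of these are already known to be simple, so all initial cluster monomials are real simple. For the inductive step, the key input would be that at each mutation, an exchange relation of the form $x\,x^* = M_+ + M_-$ is realized categorically by a short exact sequence
\[
 0 \longrightarrow S(M_-) \longrightarrow S(x)\otimes S(x^*) \longrightarrow S(M_+) \longrightarrow 0
\]
(or its opposite), with $S(x)\otimes S(x^*)$ of length $2$ having a simple head $S(M_+)$; this is the analog of the first-step mutation identity~(\ref{firstmutation}), whose proof already uses Theorem~\ref{stensor} together with the explicit $q$-character~(\ref{expqchar}). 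The combinatorial machinery of \cite{HLpr} (the notion of monoidal cluster algebra and a seed being ``reachable'') would then let the real-simple property propagate under further mutations, provided one can establish the required normality of compatible pairs of simple modules in $\mathcal{O}^+_{2\ZZ}$.

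For (b), I would use the asymptotic approach combined with the duality $D$ of Theorem~\ref{stable}. Concretely, for a real prime simple $L(\Psib)$ in $\mathcal{O}^+_{2\ZZ}$, factor $\Psib$ as in~(\ref{facthw}) and consider Kirillov--Reshetikhin approximations $W_{i,r,N}$ as in~(\ref{monkr}); the truncation homomorphisms $\phi_N$ used in the proof of Lemma~\ref{prneg} embed finite subseeds of $\mathcal{A}(\G)$ into the Grothendieck ring $K_0(\mathcal{C}_\ZZ^-)$ (after applying $D$), where the analog of the conjecture for the finite-dimensional category is the main theorem of \cite{HL,Qin,KKKO} in appropriate types. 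Passing to the limit $N\to\infty$, controlled by Theorem~\ref{normlimit}, one should obtain that $[L(\Psib)]$ (times a power of some $[\omega]$) lies in the cluster monomial basis of $\mathcal{A}(\G)$. The case $\gb=\sl_2$ of Theorem~\ref{psl2} together with Theorem~\ref{fact} would serve as a consistency check and provide the prototype for the general limiting argument.

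The principal obstacle, in my view, is the surjectivity direction (b) in its strongest form: even in the finite-dimensional setting the statement that \emph{every} real prime simple is a cluster variable is only fully known in certain types, and the asymptotic/limit arguments must be made compatible with the $\mathcal{E}$-coefficients and with the completed tensor product $\mathcal{A}(\G)\hat{\otimes}_\ZZ\mathcal{E}$ used in Theorem~\ref{isom}. Two further technical hurdles should be anticipated: first, proving that the tensor product $S\otimes S'$ of two simple modules whose classes are cluster variables in a common cluster is again simple (the Leclerc compatibility condition), for which one would need a good theory of $R$-matrices and head/socle computations within $\mathcal{O}^+$, extending what is available for $\mathcal{C}$; second, ruling out the existence of ``exotic'' real prime simples in $\mathcal{O}^+_{2\ZZ}$ whose classes do not arise from any finite mutation sequence, which seems to require a fine analysis of normalized $q$-characters beyond Proposition~\ref{cone} (which only applies to $\mathcal{O}^-$).
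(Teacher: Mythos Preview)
The statement you are attempting to prove is labeled as a \emph{conjecture} in the paper, and the paper does not prove it in general. So there is no ``paper's own proof'' to compare against; what the paper provides instead is (i) a proof in the case $\gb=\sl_2$ (Theorem~\ref{psl2}), and (ii) a reduction (Theorem~\ref{equi}) showing that Conjecture~\ref{conjgene} is \emph{equivalent} to \cite[Conjecture~5.2]{HL} about the finite-dimensional category $\mathcal{C}_\ZZ^-$. Your proposal is not a proof and, to your credit, you explicitly flag the obstacles; but you should be clear that what you have written is a strategy toward an open problem, not a proof.

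That said, your strategy is close in spirit to what the paper actually establishes. Your part (b) --- pass via $D$ and the asymptotic limits of Theorem~\ref{normlimit} to reduce to the finite-dimensional categorification of \cite{HL} --- is exactly the mechanism behind Theorem~\ref{equi}. The paper makes this precise by embedding the finite-rank truncations $K_0(\mathcal{C}_N)$ into $K_0(\mathcal{O}^+_{2\ZZ})$ using the identification~(\ref{newident}), and then showing that cluster monomials and real simples transfer in both directions via the limit Theorem~\ref{normlimit} and the simplicity-preserving duality Theorem~\ref{simpledual}. So rather than trying to build an independent inductive argument on mutation length for part~(a), the cleaner route (and the one the paper takes) is to prove the equivalence with the finite-dimensional conjecture once and for all; then the $ADE$ partial result follows from \cite{Qin}, and the $\sl_2$ case from the explicit prime factorization of Theorem~\ref{fact}.

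The genuine gap in your proposal is therefore not a missing lemma but a missing acknowledgment: the inductive propagation you sketch in (a) (normality of compatible pairs, head/socle analysis via $R$-matrices in $\mathcal{O}^+$) is not available in the paper and is itself open in this generality, and your part~(b) presupposes the finite-dimensional conjecture of \cite{HL}, which is exactly what Theorem~\ref{equi} reduces to but does not resolve. In short, your outline recovers the content of Theorem~\ref{equi} rather than a proof of Conjecture~\ref{conjgene}.
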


\begin{rem} 
{\rm
By using the duality in Proposition \ref{duality}, the statement of Theorem
\ref{isom} and of Conjecture \ref{conjgene} can also be formulated in terms of the category $\mathcal{O}^-_{2\ZZ}$.
}
\end{rem}

Note that Theorem \ref{stensor} implies that all cluster monomials of the initial seed are identified with 
real simple objects, more precisely with simple tensor products of positive prefundamental representations,
in agreement with Conjecture \ref{conjgene}.
To give other evidences supporting Conjecture \ref{conjgene}, we will use the results in the next subsection.

\subsection{Limiting characters}\label{limfin}

We will be using the dual category $\mathcal{O}^*$
considered in \cite{HJ} whose definition we now recall.

\begin{defi} Let $\mathcal{O}^*$ be the category of
  Cartan-diagonalizable $U_q(\bo)$-modules $V$ such that $V^*$ is in
  category $\mathcal{O}$.
\end{defi}

A $U_q(\mathfrak{b})$-module $V$ is said to be of lowest $\ell$-weight 
$\Psib\in P_\ell$ if there is $v\in V$ such that $V =U_q(\mathfrak{b})v$ 
and the following hold:
\begin{align*}
U_q(\bo)^- v = \CC v\,,
\qquad 
\phi_{i,m}^+v=\Psi_{i, m}v\quad (i\in I,\ m\ge 0)\,.
\end{align*}
For $\Psib\in P_\ell$, there exists up to isomorphism a unique simple $U_q(\bo)$-module
$L'(\Psib)$ of lowest $\ell$-weight $\Psi$. This module belongs to $\mathcal{O}^*$.
More precisely, we have:

\begin{prop}\label{dualweight}\cite{HJ} For $\Psib\in P_\ell$ we
  have $(L'(\Psib))^* \simeq L(\Psib^{-1})$.
\end{prop}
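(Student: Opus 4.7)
I would prove the equivalent dual form: for every $\Psib\in P_\ell$, $L(\Psib)^*\simeq L'(\Psib^{-1})$. This is equivalent to the proposition because the graded dual is a contravariant involution on modules with finite-dimensional weight spaces (which include all objects of $\mathcal{O}$ and $\mathcal{O}^*$) and by definition swaps $\mathcal{O}$ and $\mathcal{O}^*$. Set $V=L(\Psib)$ with highest $\ell$-weight vector $v_0\in V_{\lambda_0}$, $\lambda_0=\varpi(\Psib)$; the weights of $V$ then lie in $\lambda_0-Q^+$ and $V_{\lambda_0}=\CC v_0$. Let $v_0^*\in V^*$ denote the functional with $v_0^*(v_0)=1$ vanishing off $V_{\lambda_0}$; it spans the top weight space $V^*_{-\lambda_0}$.

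First I would check that $V^*$ is simple: if $W\subset V^*$ is a nonzero proper submodule, then $W^\perp=\{v\in V\mid f(v)=0\ \forall f\in W\}$ is a nonzero proper $U_q(\bo)$-submodule of $V$ (by the identity $f(yv)=(S(y)f)(v)$ and the finite-dimensionality of weight spaces), contradicting the simplicity of $V$. Next, for the condition $U_q(\bo)^-v_0^*\subset\CC v_0^*$, it suffices to verify $x_{i,r}^-v_0^*=0$ for $i\in I$, $r>0$; using that $S^{-1}$ preserves the adjoint weight grading, $S^{-1}(x_{i,r}^-)$ has weight $-\alpha_i$, so $(x_{i,r}^-v_0^*)(w)=v_0^*(S^{-1}(x_{i,r}^-)w)$ vanishes because a nonzero value would require $w\in V_{\lambda_0+\alpha_i}=0$.

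The crucial step is the $\ell$-weight computation. Since $\phi_{i,m}^+$ has weight zero and $V^*_{-\lambda_0}$ is one-dimensional, $\phi_{i,m}^+v_0^*=\Psib'_{i,m}v_0^*$ for some scalar $\Psib'_{i,m}$. I apply the antipode identity $\sum S^{-1}(x_{(2)})x_{(1)}=\epsilon(x)$ with $x=\phi_{i,m}^+$, evaluate on $v_0$, and pair with $v_0^*$. The two algebra characters $\tilde\chi_\Psib:U_q(\bo)\to\CC$ and $\tilde\chi_\Psib^{S^{-1}}=\tilde\chi_\Psib\circ S^{-1}$, arising from the $1$-dimensional modules $\CC v_0$ and $\CC v_0^*$, both vanish on elements of nonzero weight (the first by construction, the second because $S^{-1}$ is weight-preserving), so only the $U_q(\bo)^0\otimes U_q(\bo)^0$-component of $\Delta(\phi_{i,m}^+)$ contributes. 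By the Beck/Damiani description of the Drinfeld currents this component is $\sum_{k=0}^m\phi_{i,k}^+\otimes\phi_{i,m-k}^+$, so the identity reduces to $\sum_{k=0}^m\Psib_{i,k}\Psib'_{i,m-k}=\delta_{m,0}$, equivalently $\Psib'_i(z)=\Psib_i(z)^{-1}$. Hence $v_0^*$ generates a simple $\mathcal{O}^*$-module of lowest $\ell$-weight $\Psib^{-1}$, so $V^*\simeq L'(\Psib^{-1})$; dualising once more yields the proposition.

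The main obstacle is establishing that the $U_q(\bo)^0\otimes U_q(\bo)^0$-component of $\Delta(\phi_{i,m}^+)$ is exactly $\sum_k\phi_{i,k}^+\otimes\phi_{i,m-k}^+$. This is not transparent from the Drinfeld--Jimbo presentation of $U_q(\g)$ in which the coproduct is originally defined, and its derivation requires passing through the Drinfeld-currents description of $U_q(\g)$ due to Beck and Damiani.
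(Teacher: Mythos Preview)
The paper does not prove this proposition; it is simply quoted from \cite{HJ}. So there is no ``paper's own proof'' to compare against, and your sketch is an independent argument.

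Your overall strategy is correct and is essentially the one used in \cite{HJ}: show that $L(\Psib)^*$ is simple, that $v_0^*$ is killed by $U_q(\bo)^-$ for weight reasons, and identify its $\ell$-weight via the Damiani coproduct formula for $\phi_i^+(z)$. The simplicity and the vanishing of $U_q(\bo)^-$ on $v_0^*$ are fine.

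There is one imprecise step. You write that $\tilde\chi_\Psib$ and $\tilde\chi_\Psib\circ S^{-1}$ are ``algebra characters $U_q(\bo)\to\CC$ arising from the $1$-dimensional modules $\CC v_0$ and $\CC v_0^*$''. But $\CC v_0$ is \emph{not} a $U_q(\bo)$-module: $v_0$ is annihilated by $e_1,\dots,e_n$, not by $e_0$, so there is no such character on all of $U_q(\bo)$. Consequently your argument that ``only the $U_q(\bo)^0\otimes U_q(\bo)^0$-component of $\Delta(\phi_{i,m}^+)$ contributes'' does not follow from a pure weight consideration: a term $a\otimes b$ with $\mathrm{wt}(a)<0$ and $\mathrm{wt}(b)>0$ could in principle survive in $v_0^*\bigl(S^{-1}(b)\,a\,v_0\bigr)$, since $a v_0$ need not vanish and $S^{-1}(b)$ brings you back to weight $\lambda_0$. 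What actually kills these terms is the \emph{precise form} of Damiani's formula, namely that the extra terms of $\Delta(\phi_{i,m}^+)$ lie in $U_q(\bo)\otimes U_q(\bo)^+_{>0}\cdot U_q(\bo)^0$ (or the mirror statement with $U_q(\bo)^-$ on the left), so that either $x_{(1)}$ annihilates $v_0$ or $S^{-1}(x_{(2)})v_0$ lands in a strictly lower weight space and pairs to zero with $v_0^*$. You do cite Beck--Damiani as the source of the coproduct description, so the fix is only to invoke the sharper statement rather than a weight argument; once that is done your convolution identity $\sum_k\Psi_{i,k}\Psi'_{i,m-k}=\delta_{m,0}$ and the conclusion $\Psib'=\Psib^{-1}$ are correct.
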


We can also define as in
\S\ref{qcarsec}
notions of characters and $q$-characters for $\mathcal{O}^*$.

 We now explain that characters (resp. $q$-characters) of certain simple objects
in the category $\mathcal{O}^-$ can be obtained as limits of characters (resp. $q$-characters) of finite-dimensional representations. This is known for negative prefundamental representations \cite{HJ}.

Let $L(\Psib)$ be a simple module whose highest $\ell$-weight can be written as a finite product
$$\Psib = [\omega] \times m \times \prod_{i\in I}\left(\prod_{r\geq -R_0}\Psib_{i,q^r}^{u_{i,q^r}}\right)$$
where $\omega\in P_\Q$, $R_0\geq 0$, $u_{i,q^r}\leq 0$ and $m\in\mathcal{M}$ is a dominant monomial.
For $R\geq R_0$, set 
\begin{equation}\label{formmr}M_R = m \prod_{i\in I}\left(\prod_{r\geq -R_0,\ r'\geq 0,\ r-2d_ir'\geq -R}Y_{i,q^{r-2d_ir'-d_i}}^{-u_{i,q^r}}\right).\end{equation}

\begin{thm}\label{normlimit} (1) We have the limit as formal power series :
$$\widetilde{\chi}_q(L(M_R))\underset{R\rightarrow +\infty}{\rightarrow} \widetilde{\chi}_q(L(\Psib)) 
\in \ZZ\left[\left[A_{i,a}^{-1}\right]\right]_{i\in I, a\in\CC^*}.$$
(2) We have $\widetilde{\chi}(L(\overline{\Psib}^{-1})) = \widetilde{\chi}(L(\Psib))$ and so we have the limit as formal power series :
$$\widetilde{\chi}(L(M_R))\underset{R\rightarrow +\infty}{\rightarrow} \widetilde{\chi}(L(\overline{\Psib}^{-1}))\in \ZZ[[-\alpha_i]]_{i\in I}.$$
\end{thm}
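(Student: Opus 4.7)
\medskip
The plan is to generalize the asymptotic construction of prefundamental representations from \cite[\S7]{HJ} (which treats the case $\Psib = \Psib_{i,a}^{-1}$) to the full product $\Psib$. The starting observation is the factorization
$$
M_R \;=\; m \cdot \prod_{(i,r)} N_{i,r}(R)^{-u_{i,q^r}},\qquad N_{i,r}(R) \;:=\; \prod_{\substack{r'\geq 0\\ r-2d_ir'\geq -R}}Y_{i,q^{r-d_i-2d_ir'}},
$$
in which $L(N_{i,r}(R))$ is a Kirillov--Reshetikhin module $W^{(i)}_{k_i(R),\,b_i(r,R)}$ whose rank $k_i(R)$ grows linearly with $R$. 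By the asymptotic construction of Hernandez--Jimbo (cf.\ \cite[\S6--\S7]{HJ}, and Example~\ref{ex-calcul} in the $\sl_2$ case), the normalized $q$-character of each such KR module converges pointwise in $\ZZ[[A_{j,a}^{-1}]]$ to $\widetilde{\chi}_q(L_{i,q^r}^-)$ as $R\to\infty$. Thus the sought limit is packaged into the KR factors of~$M_R$.

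For part~(1), I would proceed by a sandwich argument. For the upper bound, the spectral parameters of the factors of $M_R$ are arranged so that, by standard cyclicity of tensor products of KR-modules, $L(M_R)$ is a quotient of the tensor product $T_R := L(m)\otimes\bigotimes_{(i,r)} L(N_{i,r}(R))^{\otimes(-u_{i,q^r})}$; hence $\widetilde{\chi}_q(L(M_R))$ is bounded coefficient-wise above by $\widetilde{\chi}_q(T_R)$, which by the previous step converges to $\widetilde{\chi}_q(L(m))\cdot\prod_{(i,r)}\widetilde{\chi}_q(L_{i,q^r}^-)^{-u_{i,q^r}}$. For the lower bound, I would prove a stabilization property: for each fixed $\beta\in Q^+$, the $\beta$-shifted weight subspace of $L(M_R)$ is spanned by the action of a finite family of Drinfeld generators $x_{j,s}^-$ on the highest-weight vector, and for $R$ sufficiently large (depending on $\beta$) this subspace is canonically identified with the corresponding weight subspace of $L(\Psib)$. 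This follows from an adaptation of the inverse-limit / restricted-dual construction of \cite[\S7]{HJ}, now with an auxiliary dominant factor $L(m)$ tensored in. Combining the two bounds at every coefficient yields $\widetilde{\chi}_q(L(M_R))\to\widetilde{\chi}_q(L(\Psib))$, and the limit lies in $\ZZ[[A_{i,a}^{-1}]]$ by Proposition~\ref{cone}.

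For part~(2), applying $\varpi$ to (1) gives $\widetilde{\chi}(L(M_R))\to\widetilde{\chi}(L(\Psib))$, so it remains to establish the identity $\widetilde{\chi}(L(\Psib)) = \widetilde{\chi}(L(\overline{\Psib}^{-1}))$. This I would prove factor by factor using the decomposition $\Psib = [\omega]\cdot m\cdot\prod\Psib_{i,q^r}^{u_{i,q^r}}$: for each prefundamental factor, the character is independent of both the spectral parameter and the sign by Theorem~\ref{formuachar}(i); for the finite-dimensional factor $L(m)$, the normalized character is independent of the spectral parameters of $m$, and hence coincides with $\widetilde{\chi}(L(\overline{m}^{-1}))$. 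The main obstacle is the stabilization step in part~(1): one must carefully control that the truncation replacing the infinite-dimensional prefundamental data of $\Psib$ by the finite-dimensional KR data of $M_R$ does not disturb weight spaces of bounded depth, even in the presence of the extra factor $L(m)$. This is the non-trivial extension of the single-prefundamental argument of \cite{HJ}, and effectively amounts to constructing the ``asymptotic representation'' advertised in Theorem~\ref{normlimit} as a coherent inverse limit of the $L(M_R)$.
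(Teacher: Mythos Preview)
Your sandwich argument in part~(1) does not close. The upper bound you write down is
\[
\widetilde{\chi}_q(L(M_R))\;\leq\;\widetilde{\chi}_q(T_R)\;\longrightarrow\;\widetilde{\chi}_q(L(m))\cdot\prod_{(i,r)}\widetilde{\chi}_q(L_{i,q^r}^-)^{-u_{i,q^r}},
\]
but the right-hand side is the $q$-character of a \emph{tensor product}, which is in general strictly larger than $\widetilde{\chi}_q(L(\Psib))$. Your ``lower bound'' is the assertion that the stabilized weight spaces of $L(M_R)$ are identified with those of $L(\Psib)$; but the inverse-limit construction of \cite[\S4, \S7]{HJ} only produces a highest $\ell$-weight module $L_1$ with $\ell$-weight $\Psib$, and identifying $L_1$ with the simple $L(\Psib)$ is exactly the point that needs an independent dimension bound. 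So as written, your lower step assumes what it must prove. The paper closes this gap with a different, and rather sharp, trick: it tensors $\widetilde{L}(\Psib)$ with $\widetilde{L}(M_R\Psib^{-1})$, observes that $M_R\Psib^{-1}$ is a product of \emph{positive} prefundamental $\ell$-weights, and then uses Theorem~\ref{formuachar}(i) (positive prefundamentals have $\chi_q = [\Psib_{i,a}]\cdot\chi$, i.e.\ no $A^{-1}$-structure) to conclude that every $\ell$-weight of $\widetilde{L}(M_R)$ already occurs in $\widetilde{L}(\Psib)$. This gives $\dim L(\Psib)_\omega \geq \dim L(M_R)_\omega$ directly, which forces $L_1\simeq L(\Psib)$.

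Your argument for part~(2) has a more basic problem: the identity $\widetilde{\chi}(L(\Psib))=\widetilde{\chi}(L(\overline{\Psib}^{-1}))$ cannot be proved ``factor by factor'', because $L(\Psib)$ and $L(\overline{\Psib}^{-1})$ are simple modules, not tensor products of the factors appearing in the $\ell$-weight decomposition of $\Psib$. Knowing that $\widetilde{\chi}(L_{i,a}^+)=\widetilde{\chi}(L_{i,a}^-)$ and that $\widetilde{\chi}(L(m))$ is spectral-parameter-free says nothing about the character of the simple subquotient $L(\Psib)$ of their tensor product. The paper instead extracts from the same asymptotic construction a \emph{second} module $L_2$ in the dual category $\mathcal{O}^*$, of lowest $\ell$-weight $\overline{\Psib}$, with $\widetilde{\chi}(L_2^*)=\widetilde{\chi}(L_1)$ by construction; the identity then follows once $L_2^*\simeq L(\overline{\Psib}^{-1})$ is shown to be simple as in \cite[Theorem~6.3]{HJ}.
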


The proof of Theorem \ref{normlimit} is essentially the same as that of \cite[Theorem 6.1]{HJ}, so 
we just give an outline.

\begin{proof}  
First let us prove that the dimensions of
weight spaces of $\widetilde{L}(\Psib)$ are larger than those of $\widetilde{L}(M_R)$. 
Consider the tensor product
$$T = \widetilde{L}(\Psib)\otimes \widetilde{L}(M_R\Psib^{-1}).$$
By definition of $M_R$, the $\ell$-weight $M_R\Psib^{-1}$ is a product of $\Psib^+_{i,q^r}$ times $[\lambda]$ 
for some $\lambda\in P_\Q$, so by
Theorem \ref{stensor}, the module $\widetilde{L}(M_R\Psib^{-1})$ is a tensor product of positive fundamental representations. 
Moreover $T$ and $\widetilde{L}(M_R)$ have the same 
highest $\ell$-weight, so $\widetilde{L}(M_R)$ is a subquotient of~$T$.
By \cite[Theorem 4.1]{Fre2}, each $\ell$-weight of $\widetilde{L}(M_R)$ is the product of the highest
$\ell$-weight $M_R(\varpi(M_R))^{-1}$ by a product of
$A_{j,b}^{-1}$, $j\in i$, $b\in\CC^*$ .
Hence, by Theorem \ref{formuachar}, an $\ell$-weight of $T$ is an
$\ell$-weight of $\widetilde{L}(M_R)$ only if it is of the form
$$\Psib'\varpi(\Psib)^{-1}(\widetilde{M_R} \widetilde{\Psib}^{-1})$$
where $\Psib'$ is an $\ell$-weight of $L(\Psib)$ and $\widetilde{M_R}\widetilde{\Psib}^{-1}$ 
is the highest $\ell$-weight of $\widetilde{L}(M_R\Psib^{-1})$. We get the result for the dimensions.

Then we prove as in \cite[Section 4.2]{HJ} that we can define an inductive linear system
$$L(M_0)\rightarrow L(M_1)\rightarrow \cdots \rightarrow L(M_R)\rightarrow L(M_{R+1})\rightarrow \cdots$$
from the $L(M_R)$ so that we have the convergency of the action of the subalgebra $\widetilde{U}_q(\Glie)$ 
of $U_q(\Glie)$ generated by the $x_{i,r}^+$ and the $k_i^{-1}x_{i,r}^-$. 
We get a limiting representation of $\widetilde{U}_q(\Glie)$ from which one can construct
a representation $L_1$ of $U_q(\bo)$ in the category $\mathcal{O}$ and a representation $L_2$ of $U_q(\bo)$
in the category $\mathcal{O}^*$ \cite[Proposition 2.4]{HJ}. 
Moreover, $L_1$ (resp. $L_2$) is of highest (resp. lowest) $\ell$-weight $\Psib$ (resp. $\overline{\Psib}$). 

By construction, the normalized $q$-character of $L_1$ is the limit of the normalized $q$-characters 
$\widetilde{\chi}_q(L(M_R))$ as formal power series. Combining with the result of the first paragraph of this proof, 
the representation $L_1$ is necessarily simple isomorphic to $L(\Psib)$. We have proved the first statement in the Theorem.

Now, by construction $L_2^*$ is in the category $\mathcal{O}$ with highest $\ell$-weight $\overline{\Psib}^{-1}$ and 
satisfies $\widetilde{\chi}(L_2^*) = \widetilde{\chi}(L_1)$. To conclude, it suffices to prove that $L_2^*$ is irreducible. 
This is proved as in \cite[Theorem 6.3]{HJ}. 
\end{proof}

We have the following application:

\begin{thm}\label{simpledual} Let $L(\Psib)$ be a simple module in the category $\mathcal{O}^-$ such that $\widetilde{\Psib} = \Psib$. 
Then its image by $D^{-1}$ in $K_0(\mathcal{O}^+)$ is simple equal to $D^{-1}([L(\Psib)]) = [L(\overline{\Psib}^{-1})]$.
\end{thm}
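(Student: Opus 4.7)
The plan is to reduce the general statement to the explicit formulas for $D^{-1}$ on prefundamental and finite-dimensional modules (Example \ref{exdual}(i)--(ii)), via the factorization of $\Psib$ coming from Proposition \ref{debutprop}, followed by a triangularity argument in the Grothendieck rings. The hypothesis $\widetilde{\Psib} = \Psib$, equivalently $\varpi(\Psib) = 0$, is precisely what makes the weight-shift factors in the explicit formulas cancel correctly.

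First I check that $L(\overline{\Psib}^{-1}) \in \mathcal{O}^+$. Writing
\[
\Psib = [\omega] \cdot m \cdot \prod_k \Psib_{j_k, b_k}^{-1}
\]
as in Equation (\ref{facthw}) with $m$ a dominant monomial, the conditions $\varpi(\Psib)=0$ and $\varpi(\Psib_{i,a})=0$ force $\omega = -\varpi(m)$. A direct computation of zeros and poles yields $\overline{\Psib_{i,a}} = \Psib_{i,1/a}$ and $\overline{Y_{i,a}} = [2\omega_i]\,Y_{i,1/a}^{-1}$, from which
\[
\overline{\Psib}^{-1} = [-\varpi(m)] \cdot m_1 \cdot \prod_k \Psib_{j_k, 1/b_k},
\]
where $m_1$ is obtained from $m$ by the substitution $Y_{i,a}\mapsto Y_{i,1/a}$. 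This expression is manifestly a positive $\ell$-weight.

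Next, by Proposition \ref{debutprop} together with Theorem \ref{stensor}, $L(\Psib)$ is the top simple constituent of the module
\[
X := [-\varpi(m)] \otimes L(m) \otimes \bigotimes_k L_{j_k, b_k}^- \in \mathcal{O}^-,
\]
and $L(\overline{\Psib}^{-1})$ is the top constituent of
\[
X' := [-\varpi(m)] \otimes L(m_1) \otimes \bigotimes_k L_{j_k, 1/b_k}^+ \in \mathcal{O}^+.
\]
Using the $\mathcal{E}$-linearity and multiplicativity of $D^{-1}$ combined with the formulas $D^{-1}([L_{j,b}^-]) = [L_{j,1/b}^+]$ from Example \ref{exdual}(i) and $D^{-1}([L(m)]) = [L(m_1)]$ from Example \ref{exdual}(ii), a direct calculation yields $D^{-1}([X]) = [X']$ in $K_0(\mathcal{O}^+)$.

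Finally, expanding triangularly $[X] = [L(\Psib)] + \sum_{\Phi} c_\Phi [L(\Phi)]$ in $K_0(\mathcal{O}^-)$ and $[X'] = [L(\overline{\Psib}^{-1})] + \sum_{\Phi'} c'_{\Phi'} [L(\Phi')]$ in $K_0(\mathcal{O}^+)$, where the lower terms are indexed by simples of strictly smaller weight, I proceed by descending induction on the weight partial order. Using $[L(\Phi)] = [\varpi(\Phi)] \cdot [L(\widetilde{\Phi})]$ to reduce each lower simple to its weight-zero normalization, the induction hypothesis gives $D^{-1}([L(\Phi)]) = [L(\overline{\Phi}^{-1})]$, and matching lower terms in $D^{-1}([X]) = [X']$ cancel, leaving the identity $D^{-1}([L(\Psib)]) = [L(\overline{\Psib}^{-1})]$. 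The main obstacle is making this induction rigorous in the presence of the countable sums of Proposition \ref{inf}: one must verify that $D^{-1}$ is continuous with respect to this topology (guaranteed by Theorem \ref{stable}), that the involution $\Phi \mapsto \overline{\Phi}^{-1}$ on zero-weight $\ell$-weights respects the weight partial order, and that only finitely many simples contribute at each fixed weight level (ensured by finite-dimensionality of weight spaces in $\mathcal{O}$ and Proposition \ref{cone}). The preservation of characters by $D$ from Theorem \ref{formuachar}(i) combined with Theorem \ref{normlimit}(2) provides the additional consistency check that $\chi(D^{-1}([L(\Psib)])) = \chi(L(\overline{\Psib}^{-1}))$, which is compatible with the inductive scheme.
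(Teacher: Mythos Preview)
Your reduction to the products $X$ and $X'$ and the computation $D^{-1}([X]) = [X']$ are fine, but the inductive step that extracts $D^{-1}([L(\Psib)]) = [L(\overline{\Psib}^{-1})]$ from this does not go through. The problem is the clause ``matching lower terms in $D^{-1}([X]) = [X']$ cancel.'' Concretely, from $[X] = [L(\Psib)] + \sum_\Phi c_\Phi [L(\Phi)]$ and the induction hypothesis you obtain
\[
[X'] \;=\; D^{-1}([L(\Psib)]) + \sum_\Phi c_\Phi\,[L(\overline{\Phi}^{-1})],
\]
while separately $[X'] = [L(\overline{\Psib}^{-1})] + \sum_{\Phi'} c'_{\Phi'}[L(\Phi')]$. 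To conclude, you need $\sum_\Phi c_\Phi[L(\overline{\Phi}^{-1})] = \sum_{\Phi'} c'_{\Phi'}[L(\Phi')]$, i.e.\ that the composition factors of $X$ and $X'$ below the top correspond bijectively via $\Phi\mapsto\overline{\Phi}^{-1}$ with identical multiplicities. You have not justified this; character equality (from the compatibility of $D$ with characters) only gives $\sum c_\Phi\,\chi(L(\overline{\Phi}^{-1})) = \sum c'_{\Phi'}\chi(L(\Phi'))$, which is far weaker. Moreover the induction itself is ill-posed: each lower $\Phi$ gets normalized to $\widetilde{\Phi}$, which again has $\varpi=0$, so you are back at the same level with a \emph{different} $\ell$-weight, and there is no well-founded descent.

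The paper's proof avoids this by taking a completely different route: rather than peeling off one simple at a time, it shows directly that the assignment $[L(\Psib)]\mapsto[L(\overline{\Psib}^{-1})]$ (for $\widetilde{\Psib}=\Psib$) extends to a ring homomorphism $K_0(\mathcal{O}^-)\to K_0(\mathcal{O}^+)$, and then concludes since this homomorphism agrees with $D^{-1}$ on the generators $[L_{i,b}^-]$. The multiplicativity is the whole point, and it is established using the dual category $\mathcal{O}^*$ and the limiting construction of Theorem~\ref{normlimit}: the limit produces for each $L(\Psib)\simeq L_1$ in $\mathcal{O}^-$ a companion $L_2$ in $\mathcal{O}^*$ with $L_2^*\simeq L(\overline{\Psib}^{-1})$, and the construction is compatible with tensor products, so a decomposition of $L(\Psib)\otimes L(\Psib')$ passes, via $L_2\otimes L_2'$ and dualization (Proposition~\ref{dualweight}), to the corresponding decomposition of $L(\overline{\Psib}^{-1})\otimes L(\overline{\Psib'}^{-1})$. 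This is precisely the matching of lower terms that your argument assumed without proof.
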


\begin{proof}  From Example~\ref{exdual}~(i), the property is satisfied by negative 
prefundamental repre\-sentations.
Since these representations generate the fraction field of $K_0(\mathcal{O}^-)$, it suffices to show that the assignment
$[L(\Psib)]\mapsto [L(\overline{\Psib}^{-1})]$ for $\ell$-weights $\Psib$ satisfying $\Psib = \widetilde{\Psib}$ is multiplicative
(recall that $D$ is a morphism of $\mathcal{E}$-algebras).
Let us use the same notation as in the proof of Theorem \ref{normlimit} above.
For $L(\Psib)\simeq L_1$ and $L(\Psib')\simeq L_1'$ simple modules
in $\mathcal{O}^-$ with $\widetilde{\Psib} = \Psib$ and $\widetilde{\Psib'} = \Psib'$,
we have the corresponding modules $L_2$, $L_2'$ in $\mathcal{O}^*$. We consider the decomposition
$$[L(\Psib)\otimes L(\Psib')] = \sum_{\Psib'', \widetilde{\Psib''} = \Psib''}m_{\Psib''}[L(\Psib'')]$$
in $K_0(\mathcal{O}^-)$ with the $m_{\Psib''}\in\mathcal{E}$.
Each $\chi_q(L(\Psib''))$ is obtained as a limit as in Theorem \ref{normlimit}, and by construction the corresponding modules $L'(\Psib'')$
in the category $\mathcal{O}^*$ satisfy
$$[L_2\otimes L_2'] = \sum_{\Psib'', \widetilde{\Psib''} = \Psib''}  m_{\Psib''}' [L'(\overline{\Psib''})].$$
where each $m_{\Psib''}'\in\mathcal{E}$ is obtained from $m_{\Psib''}$ via the substitution $[\omega] \mapsto [-\omega]$.
This implies $[L_2^*\otimes (L_2')^*] = \sum_{\Psib'', \widetilde{\Psib''} = \Psib''} m_{\Psib''} [(L'(\overline{\Psib''}))^*]$,
that is, in view of Proposition~\ref{dualweight},
$$[L(\overline{\Psib}^{-1})]\otimes [L(\overline{\Psib'}^{-1})] =
\sum_{\Psib'', \widetilde{\Psib''} = \Psib''}m_{\Psib''}[L(\overline{\Psib''}^{-1})],$$
as required.
\end{proof}

\begin{example} {\rm
Applying $D$ to (\ref{firstmutation}), we get the following relation in $K_0(\mathcal{O}^-)$:
$$\left[L(\overline{\Psib}^{-1})\otimes L_{i,q^{-r}}^-\right] = \left[\bigotimes_{j,C_{j,i}\neq 0}L_{j,q^{-r+d_jC_{j,i}}}^-\right]
+ [-\alpha_i]\left[\bigotimes_{j,C_{j,i}\neq 0}L_{j,q^{-r-d_jC_{j,i}}}^-\right],$$
where 
$$\overline{\Psib}^{-1} = [-\omega_i]Y_{i,q^{-r+d_i}}\prod_{j,C_{j,i}<0}\Psib^{-1}_{j,q^{-r+d_jC_{j,i}}}.$$
}
\end{example}

\subsection{Proof of Conjecture \ref{conjgene} for $\Glie = \widehat{\sl}_2$} 
In this section we give an explicit description of all simple
modules in $\mathcal{O}^+$ and in $\mathcal{O}^-$ for $\Glie = \widehat{\sl}_2$.

A $q$-set is a subset of $\CC^*$ of the form $\{aq^{2r}\mid R_1\leq r\leq R_2\}$
for some $a\in\CC^*$ and $R_1\leq R_2\in\ZZ\cup \{-\infty,+\infty\}$.
The KR-modules $W_{k,a}$, $W_{k',b}$ are said to be in \emph{special position} if the union of $\{a,aq^2,\cdots, aq^{2(k-1)}\}$
and $\{b,bq^2, \cdots , bq^{2(k'-1)}\}$ is a $q$-set which contains both properly.
The KR-module $W_{k,a}$ and the prefundamental representation $L_b^+$ are said to be in special
position if the union of $\{a,aq^2,aq^4,\cdots , aq^{2(k-1)}\}$ and $\{bq, bq^3, bq^5,\cdots \}$ is a $q$-set which contains both properly.
Two positive prefundamental representations are never in special position.
Two representations are in \emph{general position} if they are not in special position.

The invertible elements in the category $\mathcal{O}^+$ are the $1$-dimensional representations $[\omega]$.

\begin{thm}\label{fact} Suppose that $\Glie = \widehat{\sl}_2$. The prime simple objects in the category $\mathcal{O}^+$ are the 
positive prefundamental representations and the KR-modules (up to invertibles). 
Any simple object in $\mathcal{O}^+$ can be factorized in a
unique way as a tensor product of prefundamental 
representations and KR-modules (up to permutation of the factors and to invertibles).
Moreover, such a tensor product is simple if and only all its factors are pairwise in general position.
\end{thm}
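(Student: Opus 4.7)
The plan is to combine the classical Chari--Pressley classification of simple finite-dimensional $U_q(\widehat{\sl}_2)$-modules with the asymptotic construction of Theorem~\ref{normlimit}. The key observation is that for $\Glie = \widehat{\sl}_2$ every positive $\ell$-weight $\Psib$ has a unique presentation $\Psib = [\omega] \cdot m \cdot \prod_{b \in B} \Psib_{1,b}$ where $\omega \in P_\Q$, $m$ is a dominant monomial in the $Y_{1,a}$, and $B$ is a finite multiset of $\CC^*$. Classical Chari--Pressley theory provides a unique (up to ordering) factorization $m = \prod_j M_{k_j, a_j}$ into KR-monomials whose associated modules are pairwise in general position. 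If any of these KR-factors is in special position with one of the $\Psib_{1,b}$, I would merge them into a single pair $(W_{k',a'}, L^+_{1,b'})$ that is in general position, using that the union of a finite $q$-string and a right-infinite $q$-string in special position is itself a right-infinite $q$-string. Iterating, one obtains a list $V_1,\ldots,V_s$ of KR-modules and positive prefundamentals which are pairwise in general position and whose highest $\ell$-weights multiply to $\Psib$. Primality of each $V_i$ up to invertibles is immediate: neither the highest monomial $M_{k,a}$ of a KR-module nor a single $\Psib_{1,b}$ admits a non-trivial factorization into two positive $\ell$-weights, as such a split would break the underlying $q$-string.

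Next I would prove that $V_1 \otimes \cdots \otimes V_s \simeq L(\Psib)$. The argument is an asymptotic one via Theorem~\ref{normlimit}: each positive prefundamental $L^+_{1,b}$ is an asymptotic limit of KR-modules $W_{R, bq^{1-2R}}$ in normalized $q$-character and character. For $R$ large, the general-position hypothesis on the $V_i$ transfers to their KR-approximations $V_i^{(R)}$, so by the Chari--Pressley simplicity criterion the KR tensor product $V_1^{(R)} \otimes \cdots \otimes V_s^{(R)}$ is simple. Taking characters and letting $R \to \infty$ yields $\chi(V_1) \cdots \chi(V_s) = \chi(L(\Psib))$, again by Theorem~\ref{normlimit} applied to $L(\Psib)$ itself. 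Since $L(\Psib)$ appears as a subquotient of $V_1 \otimes \cdots \otimes V_s$ at the top $\ell$-weight and the two representations have equal characters, the tensor product must already be simple and equal to $L(\Psib)$.

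For the converse, I need to show that a tensor product of two such factors in special position is reducible. For two KR-modules this is classical Chari--Pressley. For a pair $(W_{k,a}, L^+_{1,b})$ in special position, the product $Y_{1,a}Y_{1,aq^2}\cdots Y_{1,aq^{2(k-1)}}\Psib_{1,b}$ admits a non-trivial rewriting of the form $Y_{1,a'}\cdots Y_{1,a'q^{2(k'-1)}}\Psib_{1,b'}\cdot[\pm\omega_1]$ corresponding to a different grouping of the combined $q$-string. This rewriting gives a simple quotient of $W_{k,a}\otimes L^+_{1,b}$ whose character is strictly smaller than $\chi(W_{k,a})\chi(L^+_{1,b})$. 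The Baxter relation of Example~\ref{baxsl2} realizes the simplest case exactly: $W_{1,a}\otimes L^+_{1,aq}$ sits in a non-split short exact sequence with two simple factors, one for each grouping. The same mechanism, now using the $q$-character formula of Example~\ref{exkr} for larger $k$, handles the general case.

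Uniqueness of the factorization follows from the uniqueness of the decomposition of $\Psib$ into prime positive $\ell$-weights extracted in the first step, since any tensor factorization of a simple module corresponds to a factorization of its highest $\ell$-weight into positive $\ell$-weights. The main obstacle I anticipate is making the limiting argument of the second step rigorous: convergence of characters alone does not guarantee simplicity in the limit, so one must leverage the fact that Theorem~\ref{normlimit} pins down the character of $L(\Psib)$ itself as the limit of $\chi(L(M_R))$, and check that this limit matches the limit of $\chi(V_1^{(R)})\cdots\chi(V_s^{(R)})$ precisely when the $V_i$ are pairwise in general position. This coincidence of characters then forces the sub\-quotient inclusion to be an equality.
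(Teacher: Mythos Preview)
Your approach mirrors the paper's: reduce everything to the simplicity criterion via the unique combinatorial factorization of positive $\ell$-weights into KR and prefundamental pieces in pairwise general position, invoke Chari--Pressley for the purely finite-dimensional case, use the asymptotic construction of \S\ref{limfin} for the general ``if'' direction, and argue the ``only if'' direction case by case. One small correction that does not affect your argument: positive prefundamentals are \emph{not} limits of KR-modules at the level of normalized $q$-characters (compare Example~\ref{ex-calcul} with Example~\ref{exkr}); the limit holds only for characters, via part~(2) of Theorem~\ref{normlimit}, and that is precisely what your character-matching step actually uses.
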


\begin{proof} As in the classical case of finite-dimensional representations, it is easy to check
that every positive $\ell$-weight has a unique factorization as a product of highest $\ell$-weights of
KR-modules and positive prefundamental representations in pairwise general position.
Hence it suffices to prove the equivalence in the last sentence. 
By \cite{CP}, the result is known for finite-dimensional representations.
Now, by using Section \ref{limfin}, this result implies that a tensor product with factors which are 
in general position is simple. 
Conversely, it is known that a tensor product of KR-modules which are in special position
is not simple. Also, it is easy to see that the tensor product of a KR-module and
a positive prefundamental representation which are in special position is not simple.
\end{proof}

\begin{rem}\label{remark7-8}
{\rm
(i) This is a generalization of the factorization of simple representations in $\mathcal{C}$ when 
$\Glie = \widehat{\sl}_2$ \cite{CP}.

(ii) This result for $\Glie = \widehat{\sl}_2$ implies that all simple objects in $\mathcal{O}^+$
are real and that their factorization into prime representations is unique.

(iii) In \cite{MY}, a factorization is proved for simple modules in $\hat{\mathcal{O}}$ when $\Glie = \widehat{\sl}_2$. 
But the factorization is not unique in the category $\hat{\mathcal{O}}$, see Remark~\ref{rem3.10}.

(iv) By Proposition \ref{duality}, our result implies a similar factorization in the category $\mathcal{O}^-$.

(v) The combinatorics of $q$-sets in pairwise general position is very similar to the combinatorics of triangulations of the $\infty$-gone 
studied in \cite{GG}, in relation with certain cluster structures of infinite rank. However, in \cite{GG} only arcs $(m,n)$ joining 
two integers $m$ and $n$ are considered, whereas we also allow arcs of the form $(m,+\infty)$ corresponding to positive prefundamental 
representations. Also, we are only interested in one mutation class, namely the mutation class of the initial triangulation
$\{(m,+\infty) \mid m\in\Z\}$.
}
\end{rem}

\begin{thm}\label{psl2} Conjecture \ref{conjgene} is true in the $\sl_2$-case.
\end{thm}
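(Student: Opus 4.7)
The plan is to combine the classification of simple modules in $\mathcal{O}^+$ from Theorem \ref{fact} with a combinatorial identification between the cluster structure of $\A(\Gamma)$ and triangulations of the half-plane $\infty$-gon (with a distinguished vertex at infinity), and then to invoke Theorem \ref{isom} to transfer the correspondence.

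The first step is to encode prime simples of $\mathcal{O}^+_{2\ZZ}$ as arcs. By Theorem \ref{fact} and Remark \ref{remark7-8}~(ii), every simple module of $\mathcal{O}^+_{2\ZZ}$ is real and admits a unique factorization (up to invertibles and reordering) into tensor products of prefundamentals $L_{1,q^{2m}}^+$ and KR-modules $W_{k,q^{2a+1}}^{(1)}$. We attach the ray $(m,+\infty)$ to $L_{1,q^{2m}}^+$ and the finite arc $(a,a+k+1)$ to $W_{k,q^{2a+1}}^{(1)}$. A case-by-case inspection of the special-position conditions in Theorem \ref{fact} (KR/KR, KR/prefundamental, and prefundamental/prefundamental) shows that two primes are in general position if and only if their associated arcs do not cross. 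Hence simple modules of $\mathcal{O}^+_{2\ZZ}$ modulo invertibles are in bijection with finite partial triangulations of the $\infty$-gon with vertex at infinity.

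The second step is to match this combinatorics with the cluster structure of $\A(\Gamma)$. For $\widehat{\sl}_2$, $\Gamma$ is the doubly-infinite linear quiver on $2\ZZ$, and via \eqref{identi} the initial cluster of $\A(\Gamma)$ corresponds to the ray triangulation $\{(m,+\infty)\mid m\in\ZZ\}$, which by Theorem \ref{isom} matches the family of classes $\{[L_{1,q^{2m}}^+]\}_{m\in\ZZ}$. The inductive invariant I would maintain is: each cluster of $\A(\Gamma)$ obtained from the initial one by a finite sequence of mutations corresponds, under \eqref{identi}, to a triangulation, every cluster variable in it being the class of the associated prime simple up to a factor $[\omega]$, and each mutation corresponds to a flip. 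The base case is immediate; for the inductive step, one must verify the three-term exchange relation for each combinatorial flip type. The ray/ray flip is precisely the Baxter relation \eqref{relB2}, which is the $\widehat{\sl}_2$ specialization of \eqref{firstmutation}. The mixed ray/finite and finite/finite flips reduce to the classical $T$-system identities of Chari--Pressley among KR-modules in $K_0(\mathcal{C})$, lifted to $K_0(\mathcal{O}^+_{2\ZZ})$ via the asymptotic limit procedure of Theorem \ref{normlimit}. A standard combinatorial fact about flips guarantees that every arc appears in some triangulation reachable from the ray triangulation by finitely many flips, so every prime simple is accounted for. Combined with Proposition \ref{unsens}, which ensures all cluster variables remain in $K_0(\mathcal{O}^+_{2\ZZ})$, this produces the desired bijection between cluster monomials of $\A(\Gamma)$ and simple modules of $\mathcal{O}^+_{2\ZZ}$, both modulo invertibles, which is Conjecture \ref{conjgene} for $\widehat{\sl}_2$.

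The main obstacle I anticipate is the uniform verification of the inductive step, in particular the on-the-nose matching of each cluster exchange relation with the corresponding three-term identity in $K_0(\mathcal{O}^+_{2\ZZ})$, including the invertible $[\omega]$ factors forced by \eqref{identi}. The ray/ray case is already worked out explicitly in \S\ref{mutaex}, but the mixed and KR/KR cases require a careful pull-back of $T$-system short exact sequences from $K_0(\mathcal{C})$ to $K_0(\mathcal{O}^+_{2\ZZ})$ through Theorem \ref{normlimit}, and a clean bookkeeping of the weight normalizations so that the mutation exchange relation and the Grothendieck ring identity agree not only up to an invertible but as stated.
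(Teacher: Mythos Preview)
Your proposal is correct and follows essentially the same approach as the paper: both rely on Theorem~\ref{fact} to classify simples as tensor products of primes encoded by non-crossing arcs, identify $\mathcal{A}(\Gamma)$ with the triangulation model of the $\infty$-gon with a point at infinity (cf.\ Remark~\ref{remark7-8}(v)), and then match cluster monomials with simples via the flip/mutation correspondence. The paper's own proof is a terse sketch that defers the inductive verification you spell out to the argument of \cite[\S13.4]{HL0}, whereas you make that step explicit.
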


\begin{proof}
 Theorem \ref{fact} provides an explicit factorization of simple objects in $\mathcal{O}_{2\ZZ}^+$
into positive prefundamental representations and finite-dimensional KR-modules. In particular,
we get an explicit $q$-character formula for such a simple object and so a complete explicit description
of the Grothendieck ring $K_0(\mathcal{O}_{2\ZZ}^+)$. Besides the cluster algebra $\mathcal{A}(\Gamma)$
can be also explicitly described by using triangulations of the $\infty$-gone (see Remark~\ref{remark7-8}). 
Hence we can argue as in \cite[\S13.4]{HL0}.
\end{proof}

\subsection{Equivalence of conjectures}

In general, we have the following:

\begin{thm}\label{equi} Conjecture \ref{conjgene} is equivalent to \cite[Conjecture 5.2]{HL}.
\end{thm}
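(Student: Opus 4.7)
The plan is to exploit the nested algebraic and categorical structures
\[
\mathcal{A}(G^-) \;\hookrightarrow\; \mathcal{A}(H^-) \;\subset\; \mathcal{A}(\Gamma)
\qquad\text{and}\qquad
\mathcal{C}_\ZZ^-\;\subset\;\mathcal{O}^+_{2\ZZ},
\]
together with the dictionary between KR-classes and cluster variables set up in the proof of Proposition \ref{unsens} and the asymptotic limit construction of Theorem \ref{normlimit}. Recall that \cite[Conjecture 5.2]{HL} asserts that $\mathcal{C}_\ZZ^-$ is a monoidal categorification of $\mathcal{A}(G^-)$.

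For the direction Conjecture \ref{conjgene} $\Rightarrow$ \cite[Conjecture 5.2]{HL}: Let $\mu$ be a cluster monomial of $\mathcal{A}(G^-)$. The map $F$ and the multi-grading argument of Proposition \ref{unsens} promote $\mu$ to an element $F(\mu)\prod f_j^{a_j}$ of $\mathcal{A}(H^-)\subset\mathcal{A}(\Gamma)$ which, up to an invertible element of $\mathcal{E}$, is a cluster monomial of $\mathcal{A}(\Gamma)$ (the mutation sequence defining $\mu$ in $\mathcal{A}(G^-)$ lifts verbatim to $\mathcal{A}(H^-)$). Granting Conjecture \ref{conjgene}, this cluster monomial is the class of a real simple object $S$ of $\mathcal{O}^+_{2\ZZ}$. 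By the weight/degree considerations of Proposition \ref{unsens}, the highest $\ell$-weight of $S$ is a dominant monomial in the $Y_{i,a}$ alone (no prefundamental factor survives), so $S$ lies in $\mathcal{C}_\ZZ^-$. Since the tensor square commutes with the embeddings, $S$ is real in $\mathcal{C}_\ZZ^-$ as well.

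For the direction \cite[Conjecture 5.2]{HL} $\Rightarrow$ Conjecture \ref{conjgene}: Let $m$ be a cluster monomial of $\mathcal{A}(\Gamma)$. Running the argument of Lemma \ref{prneg} and Proposition \ref{reciproque} in reverse, for $N$ large enough one obtains a cluster monomial $m_N$ of $\mathcal{A}(G^-)$ (embedded into $\mathcal{A}(H^-)$ via $F$) such that, after multiplication by an explicit invertible element of $\mathcal{E}$, the corresponding finite-dimensional simple modules $L_N\in\mathcal{C}_\ZZ^-$ (provided by \cite[Conjecture 5.2]{HL}) have $\ell$-weight data converging in the sense of Theorem \ref{normlimit} to that of a simple module $L\in\mathcal{O}^+_{2\ZZ}$ whose class is $m$. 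Thus each cluster monomial of $\mathcal{A}(\Gamma)$ is indeed the class of a simple object; to promote this to reality, one observes that $L\otimes L$ is obtained by the same asymptotic procedure applied to $L_N\otimes L_N$, and since all the $L_N\otimes L_N$ are simple (reality of $L_N$), the limiting $L\otimes L$ remains simple by the argument of Theorem \ref{normlimit}.

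The main obstacle will be the second direction, and specifically showing that reality is preserved by the asymptotic construction: one must verify that when $L_N\otimes L_N$ is simple for every sufficiently large $N$, the limit module $L\otimes L$ is also simple, which requires constructing the limiting structure on $L\otimes L$ in parallel with that of $L$ and controlling the possible appearance of new composition factors in the limit. A secondary technical point is to check that every cluster monomial of $\mathcal{A}(\Gamma)$, not only those obtained by mutations localized in a finite window, arises as such an asymptotic limit; this amounts to a uniform statement about truncating mutation sequences, parallel to the truncation used in Proposition \ref{unsens}.
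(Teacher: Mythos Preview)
Your overall architecture matches the paper's, but there are two genuine gaps.

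\textbf{(i) You only treat half of each monoidal categorification.}
Definition~\ref{defmoncat} requires a \emph{bijection}: cluster monomials are exactly the real simple classes. In each implication you only argue ``cluster monomial $\Rightarrow$ real simple''; the converse ``real simple $\Rightarrow$ cluster monomial'' is never addressed. For the implication \cite[Conjecture 5.2]{HL} $\Rightarrow$ Conjecture~\ref{conjgene}, this converse is the harder half: given a real simple $V$ in $\mathcal{O}^-_{2\ZZ}$, one must show its finite approximations $V_N$ are themselves real (so that \cite[Conjecture 5.2]{HL} applies to them). The paper does this using that $\chi_q(V_N\otimes V_N)$ is an \emph{upper $q$-character} of $V\otimes V$ in the sense of \cite[Corollary 5.8]{h3}; since $V\otimes V$ is simple this forces $V_N\otimes V_N$ to be simple. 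You correctly flag reality-preservation as the main obstacle, but you only consider it in the direction $V_N$ real $\Rightarrow$ $V$ real, which is the easy one.

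\textbf{(ii) The frozen factors do not disappear.}
In your argument for Conjecture~\ref{conjgene} $\Rightarrow$ \cite[Conjecture 5.2]{HL}, the lifted cluster monomial is $y=F(\mu)\prod_j f_j^{a_j}$, and the $f_j$'s are \emph{prefundamental} classes. So the simple object $S$ with $[S]=y$ has highest $\ell$-weight containing the factors $\Psib_{j,\cdot}^{a_j}$; in general $S\notin \mathcal{C}_\ZZ^-$, and your claim ``no prefundamental factor survives'' is unjustified. The paper avoids this entirely: instead of passing through $F$, it observes (via the proof of Proposition~\ref{unsens} and the shift-invariance of $\mathcal{A}(\Gamma)$) that the finitely many KR-modules needed from the initial seed of $\mathcal{A}(G^-)$ already occur together in some seed of $\mathcal{A}(\Gamma)$ with the same local quiver, so $\mu$ itself is a cluster monomial of $\mathcal{A}(\Gamma)$ without any frozen correction.

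Your secondary worry about cluster monomials not localized in a finite window is unnecessary: by definition a cluster monomial involves finitely many cluster variables obtained by finitely many mutations, so the truncation always applies.
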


Combining with the recent results in \cite{Qin}, this would imply a part of Conjecture 
\ref{conjgene} for $ADE$ types, namely that all cluster monomials are classes of real simple objects.

As reminded in the introduction, \cite[Conjecture 5.2]{HL} states that $\mathcal{C}_\ZZ^-$ is the monoidal
categorification of a cluster algebra $\mathcal{A}(G^-)$. Note that $\mathcal{C}_\ZZ^-$ is a subcategory
of $\mathcal{O}_{2\ZZ}^-$ and $\mathcal{O}_{2\ZZ}^+$.

 \begin{proof} 
For $N > 0$, let $\mathcal{C}_N$ be the category of finite-dimensional $U_q(\Glie)$-modules $V$ satisfying 
$$[V]\in \ZZ[[V_{i,q^m}]]_{i\in I, -2dN - d_i\leq m < d(N+2) - d_i}\subset K_0(\mathcal{C}).$$ 
It is a monoidal category similar to the categories considered in \cite{HL0}. It contains the KR-module 
$W_{i,r,N}$ with highest monomial $M_{i,r,N}$ given in Equation (\ref{monkr}), where $i\in I$ and $-d(N+2) < r \leq 2dN$.
The Grothendieck ring $K_0(\mathcal{C}_N)$ has a cluster algebra structure with an initial seed consisting of these 
KR-modules $W_{i,r,N}$ (here we use the initial seed as in \cite{HL}).  We have established in the proof of 
Theorem \ref{isom} that $K_0(\mathcal{C}_N)\otimes \mathcal{E}$ may be seen as a subalgebra of 
$K_0(\mathcal{O}_{2\ZZ}^+)$ by using the identification of $z_{i,r}$ with the element defined in equation (\ref{newident}).
This induces embeddings $K_0(\mathcal{C}_N)\subset K_0(\mathcal{C}_{N+1})$ :
$$K_0(\mathcal{C}_1)\subset K_0(\mathcal{C}_2)\subset K_0(\mathcal{C}_3)\subset \cdots  \subset K_0(\mathcal{O}_{2\ZZ}^+)$$
which are not the naive embeddings obtained from the inclusion of categories $\mathcal{C}_N\subset \mathcal{C}_{N+1}$. The cluster monomials in $K_0(\mathcal{C}_N)$ corresponds now to cluster monomials in $K_0(\mathcal{O}_{2\ZZ}^+)$. 

Note that by Theorem \ref{simpledual}, we may consider indifferently the statement of Conjecture \ref{conjgene} for $\mathcal{O}^+_{2\ZZ}$ or for $\mathcal{O}^-_{2\ZZ}$.

Suppose that  \cite[Conjecture 5.2]{HL} is true. This implies that the cluster monomials in $K_0(\mathcal{C}_N)$ are
the real simple modules for any $N > 0$. Consider a cluster monomial in $\mathcal{O}_{2\ZZ}^+$. 
Then for $N$ large enough, we have
a corresponding real representation $V_N$ in $K_0(\mathcal{C}_N)$. 
The highest monomial of $V_N$ is a Laurent monomial in the $m_{i,r,N}$
of the form considered in Theorem \ref{normlimit}. 
By Theorem \ref{normlimit}, $\chi_q(V_N)$ converges to the $q$-character of a simple module 
$V$ in $\mathcal{O}^-_{2\ZZ}$ when $N\rightarrow +\infty$. 
Moreover $V$ is real as the $q$-character of $V\otimes V$ is obtained as a limit of simple $q$-character 
$\chi_q(V_N\otimes V_N)$ by Theorem \ref{normlimit}. 
Conversely, every real simple module $V$ in $\mathcal{O}^-_{2\ZZ}$ is obtained as such a limit simple modules $V_N$. 
Moreover since $V$ is real $V_N$ is real (for $\chi_q(V_N\otimes V_N)$ is an upper $q$-character of $V\otimes V$ 
in the sense of \cite[Corollary 5.8]{h3}). For $N$ large enough the modules $V_N$ correspond to the same cluster monomial 
which is therefore identified with $V$.

Conversely suppose that Conjecture \ref{conjgene} is true. Consider a cluster monomial $\chi$ in $K_0(\mathcal{C}_\ZZ^-)$. 
The cluster variables occuring in $\chi$ are produced via sequences of mutations from a finite number 
of KR-modules in the initial seed. By the proof of Proposition \ref{unsens}, there is a seed in $K_0(\mathcal{O}^-_{2\ZZ})$ 
containing these KR-modules (and the quiver of this seed has the same arrows joining the corresponding vertices). 
By our hypothesis $\chi$ is the class of a simple real module as an element of $K_0(\mathcal{O}^-_{2\ZZ}) \supset K_0(\mathcal{C}_\ZZ^-)$. Hence it is also real simple in $K_0(\mathcal{C}_\ZZ^-)$. Now consider
a real simple module $[V]$ in $K_0(\mathcal{C}_\ZZ^-)$. It corresponds to a cluster monomial in $K_0(\mathcal{O}^-_{2\ZZ})$
which is a cluster monomial in $K_0(\mathcal{C}_\ZZ^-)$ by the same arguments.
\end{proof}

\subsection{Web property theorem}

Let us prove the following generalization of the main result of \cite{h3}. 
If Conjecture~\ref{conjgene} holds, then the statement of the next theorem is a 
necessary condition for simple modules in the same seed.

\begin{thm} Let $S_1,\ldots, S_N$ be simple objects in $\mathcal{O}^+$ (resp. in $\mathcal{O}^-$). Then $S_1\otimes \cdots \otimes S_N$
is simple if and only if the tensor products $S_i\otimes S_j$ are simple for $i \le j$.
\end{thm}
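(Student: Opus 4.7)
The statement generalizes the main theorem of \cite{h3}, which establishes the same equivalence inside the category $\mathcal{C}$ of finite-dimensional $U_q(\Glie)$-modules. The plan is to prove the statement for $\mathcal{O}^-$ and then transport it to $\mathcal{O}^+$ via the duality $D$ of Theorem \ref{stable} together with Theorem \ref{simpledual}, which sends simples to simples and tensor products to tensor products.

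The ``only if'' direction is elementary once one observes that each $S_k$ is a highest-$\ell$-weight module, so any tensor product $S_1\otimes\cdots\otimes S_N$ is cyclic with highest $\ell$-weight $\Psib_1\cdots\Psib_N$; if it is simple it must equal $L(\Psib_1\cdots\Psib_N)$. The same conclusion holds for any reordering of the factors (same highest $\ell$-weight and same cyclic property), so all orderings yield isomorphic modules. Placing $S_i$ and $S_j$ adjacent and tensoring any proper submodule of $S_i\otimes S_j$ on the left and right by the remaining $S_k$'s (tensor product being exact) would then produce a proper submodule of the full tensor product, contradicting simplicity.

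For the nontrivial ``if'' direction, I would reduce to the finite-dimensional web property via the limiting procedure of Section \ref{limfin}. For each $k$, choose a sequence of dominant monomials $M_R^{(k)}$ as in Equation (\ref{formmr}) so that the normalized $q$-character of $L(M_R^{(k)})\in\mathcal{C}$ converges to $\widetilde{\chi}_q(S_k)$ by Theorem \ref{normlimit}. Simplicity of the pair $S_i\otimes S_j$ translates, for $R$ large enough, into simplicity of the finite-dimensional $L(M_R^{(i)})\otimes L(M_R^{(j)})$: indeed, the normalized $q$-characters of the latter converge to that of the simple module $S_i\otimes S_j$, and non-simplicity (with a proper submodule whose highest $\ell$-weight differs from the top one by a monomial in the $A_{j,b}^{-1}$) would persist in the limit by a dominance argument analogous to the proof of Theorem \ref{normlimit}. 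Applying \cite[Theorem~3.1]{h3} then gives that the full finite-dimensional tensor product $L(M_R^{(1)})\otimes\cdots\otimes L(M_R^{(N)})$ is simple for $R\gg 0$, so its normalized $q$-character coincides with that of $L(M_R^{(1)}\cdots M_R^{(N)})$. Passing to the limit in $R$, using Theorem \ref{normlimit} applied to the product of highest $\ell$-weights $\Psib_1\cdots\Psib_N$, one obtains that the normalized $q$-character of the cyclic highest-$\ell$-weight module $S_1\otimes\cdots\otimes S_N$ equals that of the simple $L(\Psib_1\cdots\Psib_N)$, forcing its simplicity.

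The main obstacle lies in the matching of normalized $q$-characters at the limit: one needs that both sides of the inequality ``composition factor of the tensor product'' versus ``limit of the simple finite-dimensional approximation'' are controlled simultaneously. Concretely, one must exhibit the compatibility between the asymptotic construction of Theorem \ref{normlimit} applied to the individual $\Psib_k$ and the one applied to the product $\Psib_1\cdots\Psib_N$, ensuring that the limiting $q$-character formula for a tensor product of simples in $\mathcal{O}^-$ is obtained as the product of the limiting $q$-characters of the factors. The rest is a combinatorial application of the finite-dimensional web property and a standard highest-$\ell$-weight argument.
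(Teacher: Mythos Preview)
Your proposal is correct and follows essentially the same route as the paper: reduce to $\mathcal{O}^-$ via the duality $D$ and Theorem~\ref{simpledual}, approximate each $S_k$ by the finite-dimensional $L(M_R^{(k)})$ of Theorem~\ref{normlimit}, transfer pairwise simplicity to finite dimensions, apply \cite{h3}, and conclude by comparing characters. Two small points where the paper is sharper than your sketch: the ``obstacle'' you flag dissolves because the monomials $M_R$ of (\ref{formmr}) are multiplicative in the highest $\ell$-weight data, so $M_R^{(1)}\cdots M_R^{(N)}$ is exactly the approximating monomial for $\Psib_1\cdots\Psib_N$; and for the final step the paper compares ordinary \emph{characters} rather than $q$-characters, which suffices since $L(\Psib_1\cdots\Psib_N)$ is already a subquotient of the tensor product. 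Also, your ``only if'' argument need not invoke cyclicity (which is not obvious in $\mathcal{O}$): commutativity of $K_0(\mathcal{O})$ forces all orderings to have the same $q$-character, and since each contains $L(\Psib_1\cdots\Psib_N)$ as a subquotient, all orderings are simple and isomorphic.
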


\begin{proof} By Proposition \ref{duality}, it is equivalent to prove the statement in the category $\mathcal{O}^-$. 
Note that the ``only if'' part is clear. 
For the ``if'' part of the statement, we may assume without loss of generality that the zeros and poles of the 
highest $\ell$-weights of the $S_i$ are in $q^\ZZ$ (see (ii) in Remark \ref{ident}). 
For each simple module $S_i$, consider a corresponding simple finite-dimensional module $L(M_{R,i})$ as in Section \ref{limfin}. 
Since $S_i\otimes S_j$ is simple, there exists $R_1$ such that for $R\geq R_1$ the tensor product
$L(M_{R,i})\otimes L(M_{R,j})$ is simple. Indeed, by Theorem \ref{normlimit}, 
$\widetilde{\chi}_q(S_i\otimes S_j)$ is the limit of the $\widetilde{\chi}_q(L(M_{R,i}M_{R,j}))$. 
More precisely, there is $R_1$ such that for $R\geq R_1$, the image of $\widetilde{\chi}_q(S_i\otimes S_j)$ 
in $\ZZ[A_{i,q^r}^{-1}]_{i\in I, r \geq -R_1 + r_i}$ is equal to $\widetilde{\chi}_q(L(M_{R,i}M_{R,j}))$.
This implies that $\widetilde{\chi}_q(L(M_{R,i}M_{R,j})) = \widetilde{\chi}_q(L(M_{R,i}))\widetilde{\chi}_q(L(M_{R,j}))$.

Now, by \cite{h3},
$L(M_{R,1})\otimes \cdots \otimes L(M_{R,N})$ is simple isomorphic to $L(M_{R,1} \cdots M_{R,N})$.
This implies that the character of $S_1\otimes \cdots \otimes S_N$ is the same as the character
of the simple module with the same highest $\ell$-weight. Hence they are isomorphic.

\end{proof}

\begin{rem} 
{\rm This provides an alternative proof of Theorem \ref{fact}.
}
\end{rem}

\subsection{Another conjecture}

To conclude, let us state another general conjecture. Although the cluster algebra structure 
presented in this paper does not appear in the statement, this conjecture arises naturally
if we compare Theorem \ref{isom} with the results of \cite{HL}.

We consider a simple finite-dimensional representation $L(m)$ whose dominant
monomial $m$ satisfies $m\in \ZZ[Y_{i,aq^r}]_{(i,r)\in W, r\leq R}$ for a given
 $R\in\ZZ$. We have the corresponding truncated $q$-character \cite{HL0}
$$\chi_q^{\leq R}(L(m))\in m\ZZ[A_{i,q^r }^{-1}]_{i\in I, r \leq R - d_i}$$
which is the sum (with multiplicity) of the monomials $m'$ occurring in 
$\chi_q(L(m))$ satisfying $m(m')^{-1}\in\ZZ[A_{i,q^r }]_{i\in I, r \leq R - d_i}$.
As in the statement of Theorem \ref{relations}, we consider 
$$\chi_\ell^{\leq R}(L(m))\in \text{Frac}(K_0(\mathcal{O}^+))$$ 
obtained from $\chi_q^{\leq R}(L(m))$ by replacing each variable $Y_{i,a}$ by $[\omega_i]\ell_{i,aq_i^{-1}}\ell_{i,aq_i}^{-1}$.

Let us set 
$$W_R = \{(i,r)\in W | R\geq r > R - 2 d_i\}.$$
For $(i,r)\in W_R$, we set $u_{i,r}$ to be the maximum of $0$ and of the powers $u_{i,q^r}(m')$ of $Y_{i,q^r}$ in 
all monomials $m'$ occurring in $\chi_q^{\leq R}(L(m))$. Let 
$$\Psib_R = \prod_{(i,r)\in W_R}\Psib_{i,q^{r + d_i}}^{u_{i,r}}\text{ and }\Psib = m \Psib_R.$$ 
The representations $L(\Psib)$, $L(\Psib_R)$ are in the category $\mathcal{O}^+$. By Theorem \ref{stensor}, 
the representation $L(\Psib_R)$ is a simple tensor product of positive prefundamental representations.

\begin{conj}\label{genconj} We have the relation in $\text{Frac}(K_0(\mathcal{O}^+))$ :
$$\chi_\ell(L(\Psib )) = \chi_\ell^{\leq R}(L(m))  \prod_{(i,r)\in W_R}\ell_{i,q^{r+d_i}}^{u_{i,r}}
=\chi_\ell^{\leq R}(L(m)) \chi_\ell(L(\Psib_R)).$$
\end{conj}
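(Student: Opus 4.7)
My plan is to prove the conjecture by combining the asymptotic limit construction of Theorem \ref{normlimit} with the Baxter substitution defining the $\ell$-character. I would approximate the simple module $L(\Psib)$ (with $\Psib = m\Psib_R$) by an increasing sequence of simple finite-dimensional modules $L(M_N)$, choosing
$$M_N = m \prod_{(i,r)\in W_R} \prod_{k=0}^{N-1} Y_{i, q^{r+d_i - 2d_i k}}^{u_{i,r}}$$
in accordance with the recipe (\ref{formmr}). By Theorem \ref{normlimit}, $\widetilde\chi_q(L(M_N))$ converges to $\widetilde\chi_q(L(\Psib))$ as formal power series in the $A_{j,b}^{-1}$. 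The asymptotic factor in $M_N$ is a product of Kirillov--Reshetikhin dominant monomials whose classes, after the weight renormalization of (\ref{newident}) and the Baxter substitution $Y_{i,a}\mapsto[\omega_i]\ell_{i,aq_i^{-1}}\ell_{i,aq_i}^{-1}$, converge to $\chi_\ell(L(\Psib_R))=\prod_{(i,r)\in W_R}\ell_{i,q^{r+d_i}}^{u_{i,r}}$.

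The heart of the argument is to show that at each finite $N$ the $q$-character $\chi_q(L(M_N))$ can be written (up to corrections that vanish in the limit) as the product of $\chi_q^{\leq R}(L(m))$ with a fully ``extended'' Kirillov--Reshetikhin $q$-character. This should follow from analyzing the Frenkel--Mukhin-type algorithm computing $\chi_q(L(M_N))$ starting from the dominant monomial $M_N$: monomials of $\chi_q(L(m))$ lying in the truncation $\chi_q^{\leq R}(L(m))$ are reached via $A^{-1}_{j,q^r}$-mutations with $r\leq R-d_j$ and do not interact with the KR-asymptotic part; monomials of $\chi_q(L(m))$ strictly below the truncation require $A^{-1}_{j,q^r}$-factors with $r>R-d_j$, whose positions coincide with those of the asymptotic KR-factors, so that these monomials are already counted as further KR-mutations of the truncated monomials, and do not appear independently in $\chi_q(L(M_N))$. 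Applying the Baxter substitution and letting $N\to+\infty$ would then yield
$$\chi_\ell(L(\Psib)) = \chi_\ell^{\leq R}(L(m))\,\chi_\ell(L(\Psib_R)).$$

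The main obstacle is to make the combinatorial absorption argument fully rigorous, i.e.\ to verify that every monomial of $\chi_q(L(M_N))$ is uniquely expressed in the form (truncated-$\chi_q$ monomial)$\times$(extended KR-mutation) with matching multiplicities. This requires a delicate analysis of the Frenkel--Mukhin algorithm at the truncation boundary, precisely where the $A^{-1}$-mutations at positions $r=R-d_j+1$ meet the lowest layer of the KR-tower. An alternative and perhaps more conceptual route is to reformulate the conjecture within the cluster algebra $\mathcal{A}(\Gamma)\hat{\otimes}_\ZZ\mathcal{E}$ via Theorem \ref{isom}, and attempt to derive it by exhibiting a finite sequence of cluster mutations from the initial seed to a seed containing the cluster variable corresponding to $[L(\Psib)]$, in the spirit of the proof of Proposition \ref{unsens}. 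This approach is the most natural given the cluster-algebraic framework of the paper, but would presumably require Conjecture \ref{conjgene} (or at least a partial version guaranteeing that $L(\Psib)$ corresponds to a cluster monomial) together with a uniform way of relating the truncation level $R$ to the mutation sequence.
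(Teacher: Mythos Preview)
The statement you are attempting to prove is labelled \emph{Conjecture} in the paper and is not proved there in general. The paper only establishes it in special cases (Example~\ref{exconjvraie}): (i) when the truncation is the full $q$-character, reducing to Theorem~\ref{relations}; (ii) for $m=Y_{i,q^{r-d_i}}$ and $R=r+d_i$, where it is the first-step mutation relation (\ref{firstmutation}); and (iii) in simply-laced types for those $L(m)$ whose truncated $q$-character is a cluster monomial, via the grading argument of Proposition~\ref{unsens} combined with \cite{Qin}. There is therefore no ``paper's own proof'' to compare your proposal against.

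Your sketch correctly identifies the asymptotic realisation of $L(\Psib)$ as a limit of $L(M_N)$ via Theorem~\ref{normlimit}, and the convergence of the KR-part to $\chi_\ell(L(\Psib_R))$. However the step you call the ``heart of the argument'' --- that $\chi_q(L(M_N))$ factors (up to controlled corrections) as $\chi_q^{\le R}(L(m))$ times an extended KR $q$-character --- is precisely the content of the conjecture and is not reducible to a Frenkel--Mukhin-type analysis. The FM algorithm is only known to compute $\chi_q$ for special modules, and there is no general mechanism ensuring that monomials below the truncation level are ``absorbed'' by KR-mutations with the correct multiplicities; indeed, non-special $L(m)$ can have dominant monomials below level $R$ that the algorithm does not detect, and these interact with the KR tower in ways your absorption heuristic does not control. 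Your acknowledged ``main obstacle'' is therefore not a technical detail but the substance of the open problem.

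Your alternative cluster-algebraic route is essentially the argument the paper gives for case~(iii): it works when $\chi_q^{\le R}(L(m))$ is a cluster monomial, by lifting the cluster variable from $\mathcal{A}(G^-)$ to $\mathcal{A}(H^-)$ using the multi-grading of Proposition~\ref{unsens}. As you note, extending this to arbitrary $m$ would require (a form of) Conjecture~\ref{conjgene}, so this route is circular in general.
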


\begin{rem} {\rm By taking the $q$-character, the statement is equivalent to the following $q$-character formula :
$$\chi_q(L(\Psib )) = \chi_q^{\leq R}(L(m)) \chi_q(L(\Psib_R)).$$}
\end{rem}

In some cases the conjecture is already proved :

\begin{example}\label{exconjvraie}{\rm
(i) In the case $\chi_q^{\leq R}(L(m)) = \chi_q(L(m))$, we have $u_{i,r} = 0$ for $(i,r)\in W_R$. 
Here the conjecture reduces to the generalized Baxter relations of Theorem \ref{relations}.

(ii) In the case $R = r + d_i$ and $m = Y_{i,q^{r-d_i}}$, we have $\chi_q(L(m))^{\geq R} = m (1 + A_{i,q^r}^{-1})$.
The conjecture reduces to the relations we have established in Formula (\ref{firstmutation}). 

(iii) As discussed above, it follows from Theorem \ref{equi} and from the main result of \cite{Qin},
that for $ADE$-types, all cluster monomials are classes of real simple objects. In particular
for $ADE$-types, Conjecture \ref{genconj} holds for all simple modules $L(m)$ which are
cluster monomials (for the cluster algebra structure defined in \cite{HL}). Indeed we may
assume that $R = 0$. It is proved in \cite{HL} that for any dominant monomial $m$,
the truncated $q$-character $\chi_q^{\leq 0}(L(m))$ is an element
of the cluster algebra $\A(G^-)$ defined in the proof of Proposition \ref{unsens}.
In $\chi_q^{\leq 0}(L(m))$ we perform the same substitution as in Theorem \ref{relations} above
(that is we apply the ring homomorphism $F$ of the proof of Proposition \ref{unsens}).
If we assume that $\chi_q^{\leq 0}(L(m))$ is a cluster variable of $\A(G^-)$, then it follows 
from the proof of Proposition \ref{unsens} that 
\[
y :=  F\left(\chi_q^{\leq 0}(L(m))\right) \prod_{(i,r)\in W_0} z_{i,q^{r + d_i}}^{u_{i,r}} 
\]
is a cluster variable in $\A(H^-)$. Using Theorem \ref{equi}, we deduce that $y$ is the $\ell$-character
of a simple module. Since $F$ is multiplicative, the argument readily extends to simple modules 
$L(m)$ such that $\chi_q^{\leq 0}(L(m))$ is a cluster monomial.
}
\end{example}

\bigskip
\small
\noindent
\begin{tabular}{ll}
David {\sc Hernandez}  
& Sorbonne Paris Cit\'e, Univ Paris Diderot,\\
&CNRS Institut de
  Math\'ematiques de Jussieu-Paris Rive Gauche UMR 7586,\\
& B\^atiment Sophie Germain, Case 7012,
75205 Paris Cedex 13, France\\
&Institut Universitaire de France,\\
& email : {\tt david.hernandez@imj-prg.fr}\\
[5mm]
Bernard {\sc Leclerc}  & Universit\'e de Caen Basse-Normandie,\\
&CNRS UMR 6139 LMNO, 14032 Caen, France\\
&email : {\tt bernard.leclerc@unicaen.fr}
\end{tabular}

\end{document}